\let\originalleft\left
\let\originalright\right
\renewcommand{\left}{\mathopen{}\mathclose\bgroup\originalleft}
\renewcommand{\right}{\aftergroup\egroup\originalright}
\DeclareMathOperator*{\argmin}{arg\,min}
\newcommand{\bE}{\mathbb{E}}
\newcommand{\cS}{\mathcal{S}}
\newcommand{\bN}{\mathbb{N}}
\newcommand{\ust}{^{\star}}
\newcommand{\ub}{^{(\beta)}}
\newcommand{\bR}{\mathbb{R}}
\newcommand{\bZ}{\mathbb{Z}}
\newcommand{\bB}{\mathbb{B}}
\newcommand{\cI}{\mathcal{I}}
\newcommand{\bP}{\mathbb{P}}
\newcommand{\cT}{\mathcal{T}}
\newcommand{\cB}{\mathcal{B}}
\newcommand{\cR}{\mathcal{R}}
\newcommand{\cL}{\mathcal{L}}
\newcommand{\cF}{\mathcal{F}}
\newcommand{\cJ}{\mathcal{J}}
\newcommand{\cX}{\mathcal{X}}
\newcommand{\Tg}{\Tilde{\Gamma}}
\newcommand{\Tpi}{\Tilde{\pi}}
\newcommand{\Te}{\Tilde{e}}
\newcommand{\Tb}{\Tilde{b}}
\newcommand{\Tu}{\Tilde{u}}
\newcommand{\TV}{\Tilde{V}\ub}
\newcommand{\vp}{\varphi}
\newcommand{\Td}{\Tilde{d}}
\newcommand{\lf}{\left}
\newcommand{\rt}{\right}
\newcommand{\nal}[1]{\begin{align*}#1\end{align*}}
\newcommand{\al}[1]{\begin{align}#1\end{align}}
\newtheorem{assumption}{\textbf{Assumption}}
\newtheorem{definition}{Definition}[section]
\newtheorem{theorem}{Theorem}[section]
\newtheorem{proposition}{Proposition}[section]
\newtheorem{corollary}[proposition]{Corollary}
\newtheorem{lemma}[theorem]{Lemma}
\def\BibTeX{{\rm B\kern-.05em{\sc i\kern-.025em b}\kern-.08em
    T\kern-.1667em\lower.7ex\hbox{E}\kern-.125emX}}
\begin{document}
\title{Jointly Optimal Policies for Remote Estimation of Autoregressive Markov Processes over Time-Correlated Fading Channel}
\author{Manali Dutta, Rahul Singh, \IEEEmembership{Member, IEEE}, and Shalabh Bhatnagar, \IEEEmembership{Fellow, IEEE}
\thanks{Manali Dutta and Shalabh Bhatnagar are with the Department of Computer Science
and Automation, Indian Institute of Science, Bangalore 560012,
Karnataka, India. E-mail: manalidutta@iisc.ac.in, shalabh@iisc.ac.in. }
\thanks{E-mail: rahulsingh0188@gmail.com.}
}

\maketitle
\thispagestyle{empty}

\begin{abstract}
    We study a remote estimation setup with an autoregressive (AR) Markov process, a sensor, and a remote estimator. The sensor observes the process and sends encoded observations to the estimator as packets over an unreliable communication channel modeled as the Gilbert-Elliot (GE) channel. We assume that the sensor gets to observe the channel state by the ACK/NACK feedback mechanism only when it attempts a transmission while it does not observe the channel state when no transmission attempt is made. The objective is to design a transmission scheduling strategy for the sensor, and an estimation strategy for the estimator that are jointly optimal, i.e., they minimize the expected value of an infinite-horizon cumulative discounted cost defined as the sum of squared estimation error over time and the sensor’s transmission power. Since the sensor and the estimator have access to different information sets, this constitutes a decentralized stochastic control problem. We formulate this problem as a partially observed Markov decision process (POMDP) and show the existence of  jointly optimal transmission and estimation strategies that have a simple structure. More specifically, an optimal transmission strategy exhibits a threshold structure, i.e., the sensor attempts a transmission only when its belief about the channel being in a good state exceeds a threshold that depends on a certain error. 
    Moreover, an optimal estimation strategy follows a `Kalman-like' update rule. When the channel parameters are unknown, we exploit this structure to design an actor-critic reinforcement learning algorithm that converges to a locally optimal policy. Simulations show the learned policy performs close to a globally optimal one, with about a $5.5\%$ average relative gap across evaluated parameters.
\end{abstract}

\begin{IEEEkeywords}
    Remote estimation, Gilbert-Elliot channel, POMDP, Actor-Critic Algorithm.
\end{IEEEkeywords}

\section{Introduction}
Wireless networked control systems (WNCS)~\cite{wang2023review} have gained significant traction in recent years due to their applicability in areas such as smart grids, autonomous vehicles, and industrial automation. The components in such a system communicate with each other via wireless communication channels. 
Such systems typically have multiple decision makers (DMs) that take decisions in order to achieve a shared objective~\cite{ge2017distributed}. Each decision maker uses only its own locally available information, collected either from the environment or from other components~\cite{ge2017distributed}, while making decisions. A fundamental challenge in optimizing the objective is that different DMs may have access to different sets of information. This gives rise to \emph{decentralized stochastic control}~\cite{nayyar2013decentralized}. The current work considers one such problem.

We consider a setup consisting of a sensor that observes an auto-regressive (AR) Markov process. The sensor encodes the observations into data packets before transmitting them to a remote estimator via an unreliable wireless communication channel modeled as a Gilbert-Elliot (GE) channel~\cite{gilbert1960capacity}. This formulation is commonly referred to as a remote state estimation problem~\cite{Lipsa2011remote}. In this setup there are two decision makers: the sensor and the estimator. We assume that the sensor does not continually probe the channel, and hence gets to know the channel state only when an acknowledgment signal is sent by the estimator after a transmission attempt. The sensor has to decide when to transmit an observation to the estimator based on the information currently available with it. Since the sensor may not continually transmit packets, the estimator needs to estimate the current state of the AR process based only on its  locally available information. The objective is to design transmission and estimation strategies \footnote{We use the words policy and strategy interchangeably.} that are jointly optimal, i.e., they  minimize the expected value of
an infinite-horizon cumulative discounted cos~\cite{Hernandez2012discrete} that is the sum of the squared estimation error and the transmission power consumed by the sensor. The main challenge encountered while solving this problem is that the sensor and the estimator do not have the same information, and this gives rise to a decentralized decision-making problem.

The above problem was partially studied in~\cite{dutta2023optimal}. The remote estimator in~\cite{dutta2023optimal} is held fixed, and a simplified problem is studied -- one in which only the sensor has to design an optimal transmission policy. The sensor maintains a certain belief state which is the conditional probability that the channel state is good. A structural result is shown which says that there exists an optimal transmission policy that is of the threshold-type with respect to the belief state, and hence the sensor transmits only when the current belief exceeds a certain threshold. The formulation in~\cite{dutta2023optimal} involves only a single decision maker (the sensor) and therefore does not encounter the additional complexity addressed in the present work. Also, the resulting policy there may not be jointly optimal with the estimator. 

Remote state estimation of a Markov process over a communication channel which involves a single decision maker has been studied earlier in~\cite{xu2004optimal, shi2012optimal, wu2012event, han2015stochastic}. 
In~\cite{xu2004optimal, shi2012optimal}, the sensor acts as a decision maker while the estimator is fixed and taken to be ``Kalman-like''. However~\cite{xu2004optimal} assumes that the communication channel is ideal, so that each transmission is successful, while~\cite{shi2012optimal} consider a packet-dropping channel where each packet sent by the sensor is successfully received with a probability that depends on the transmission power level chosen.~\cite{xu2004optimal} shows the existence of an optimal transmission policy that has a threshold structure, and hence transmits only when the magnitude of current estimation error exceeds a certain threshold. In ~\cite{shi2012optimal}, the sensor decides whether to transmit using low or high transmission power under a limited high-power usage constraint. It then shows that, under such a constraint, there exists an optimal policy that allocates all high-power transmissions to the last available slots within the finite horizon. In \cite{wu2012event} and \cite{ han2015stochastic}, the authors fix the transmission policy of the sensor while optimizing the estimator. They show that the optimal estimator turns out to be a modified Kalman estimator~\cite{kalman1960new}. 

The design of jointly optimal transmission and estimation policies has been explored in \cite{Lipsa2011remote},~\cite{Nayyar2013optimal, Ren2017infinite, Chakravorty2019remote}. In \cite{Lipsa2011remote}, remote estimation of a Markov process with Gaussian noise is considered. It uses tools from majorization theory~\cite{hajek2008paging} to show that there exists a threshold-type transmission policy that transmits only when the error magnitude exceeds a prescribed threshold, and is jointly optimal with a Kalman-like estimator. However, it assumes that the communication channel is ideal and optimizes over a finite horizon. \cite{Nayyar2013optimal} considers an energy harvesting sensor with a limited battery capacity that observes a finite state Markov process. The sensor communicates to the estimator via an ideal communication channel. Using arguments from~\cite{Lipsa2009optimal} and~\cite{hajek2008paging}, it shows the existence of a threshold-type transmission strategy with respect to the magnitude of the Markov process that is jointly optimal with a Kalman-like estimator over a finite horizon. The value of the threshold at each time depends on the sensor’s current energy level and the conditional probability of the energy level and Markov state conditioned on the estimator’s current available information. It also extends the result to a multidimensional Gaussian source.~\cite{Ren2017infinite} also shows a similar structural result where an optimal transmission policy has a threshold structure with respect to an innovation error that quantifies how much the actual Markov process state has deviated from the estimator's predicted state. In contrast to~\cite{Lipsa2011remote} and~\cite{Nayyar2013optimal}, it considers a packet dropping channel that is modeled as a time-homogeneous Markov chain and considers minimizing an infinite horizon average cost. The value of the threshold depends on the current channel state and the conditional probability density function (pdf) of the Markov process conditioned on the information available with the estimator. However, it assumes that the sensor knows the current channel state at each time while making a decision. Even with these structural results, the policy search space still remains difficult. This is because in~\cite{Nayyar2013optimal}, the decision taken by the sensor at each time depends on its entire observation history which requires high storage capacity, and a threshold transmission policy of~\cite{Ren2017infinite} depends on the conditional pdf that may require high computation power. \cite{Chakravorty2019remote} addresses this issue by further reducing the policy space. It also proves the existence of a threshold-type transmission policy (with respect to error), where the threshold values depend only on the current channel state, together with a Kalman-like estimator with which it is jointly optimal. But it assumes that the sensor knows the channel state perfectly with a delay of one unit. In practice, continually probing a channel for accessing its current state may lead to overhead expenses and consume resources~\cite{chang2007optimal}. Hence, such an assumption might not be feasible in resource constrained networks. We study a more realistic setup than~\cite{Chakravorty2019remote}, where we assume that the sensor does not observe the channel state directly. Instead, only in the case when a transmission attempt is made by the sensor, it gets to know the channel state from an acknowledgment signal sent by the transmitter. Thus, when there is no transmission attempt, the sensor does not know the channel state.

This work extends the results of our earlier conference paper~\cite{dutta2023optimal} in the following directions:

\begin{enumerate}
    \item We generalize the additive noise process in the AR source observed by the sensor. Specifically, we allow i.i.d. noise with a symmetric and unimodal distribution. This is a generalization of the  assumption in \cite{dutta2023optimal} where a Gaussian noise process was assumed.
    \item In contrast to~\cite{dutta2023optimal} where the optimization is only over the class of transmission policies while keeping the estimator fixed, in this work, we jointly optimize over the sensor transmission policy and the estimator for the average-cost problem. The structural result for the transmission policy shows that transmission attempt is made only when the belief state (the conditional probability that the channel state is good conditioned on the information available with the sensor) exceeds a threshold. This threshold is a function of the error as defined in~\eqref{def:e}. Moreover the estimator is shown to be Kalman-like. Note that~\cite{dutta2023optimal} derives optimal transmission policy for the transmitter under the assumption that the estimator is Kalman-like. The policy thus obtained, and the Kalman filter may not be jointly optimal. However, as is shown in this work, the solution of~\cite{dutta2023optimal} despite being for the discounted cost setting is indeed optimal.   
    \item When the system parameters such as the Markovian channel transition probabilities are unknown, we use the structural result to design an actor-critic (AC) learning algorithm for minimizing the cost. Specifically, the threshold curve of a policy is parameterized by finitely many parameters, and these are tuned using AC. Empirical results show that the performance of the AC algorithm is comparable to the optimal performance yielded by the relative value iteration (RVI) algorithm~\cite{Hernandez2012discrete}. However, the RVI assumes perfect knowledge of the channel parameters.
\end{enumerate}

\noindent
\emph{Notations:} We use $\bN, \bZ_+$, $\bR$, $\bR_+$ to denote the set of natural numbers, non-negative integers, real numbers, and non-negative real numbers, respectively. $\bP(\cdot)$ denotes the probability of an event. For a sigma-algebra $\cF$, $\bP[\cdot | \cF]$ and $\bE[\cdot | \cF]$ represent the conditional probability and conditional expectation conditioned on $\cF$.~For an event $A$, $\mathds{1}(A)$ denotes its indicator random variable .~$\delta_x(\cdot)$ is the delta function with unit mass at $x$.~$||v||_{\infty}$ denotes the infinity norm of the vector $v$. Further,  $\ast$ is the convolution operator.

\section{Problem Formulation}\label{sec:prob_form}
\begin{figure}[t]
	\begin{centering}
		\includegraphics[scale=0.5]{./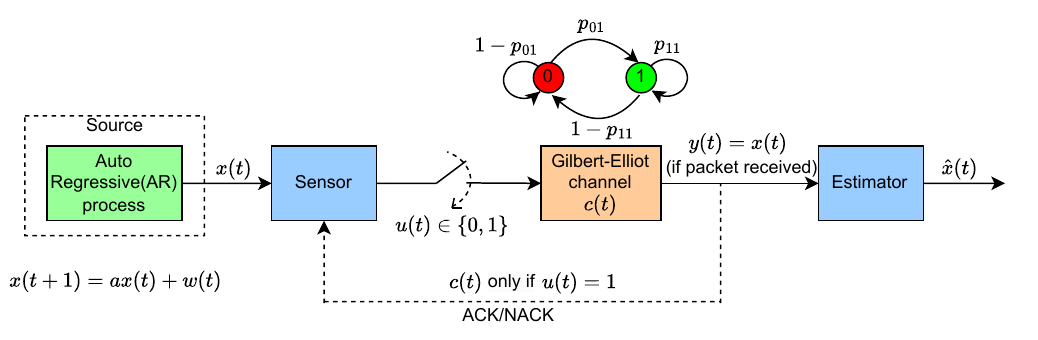} 
		\par\end{centering}
	\caption{Remote state estimation setup.}
	\label{fig:setup}
\end{figure}
We consider a networked system comprising of an AR process, a sensor and a remote estimator as shown in Fig.~\ref{fig:setup}. The sensor observes the AR process $\{x(t)\}_{t \in \bZ_+}$. The state of the process $x(t)\in \bR$, $t\geq 0$, evolves as follows with $x(0) \in \bR$ as the initial state:  
\al{
x(t+1) = a x(t) + w(t),~t \geq 0, \label{eq:source}
}
where $a \in\bR$ and the noise process $\{w(t)\}_{t \in \bZ_{\geq 0}}$ satisfies the following assumption:

\begin{assumption} \label{assump:noise}
    $\{w(t)\}_{t \in \bZ_+}$ is distributed according to a symmetric (about the origin) and unimodal distribution with pdf denoted by $\varphi$ and a finite second moment, i.e., $\sigma_w^2 := \bE[w(t)^2] < \infty$.
\end{assumption}

The sensor encodes its observations into data packets and transmits these to a remote estimator via an unreliable wireless channel. This wireless channel is modeled as a GE channel~\cite{gilbert1960capacity}. We denote the state of the channel at time $t$ by $c(t)\in \{0,1\}$. Here, $c(t)=0$ denotes that the channel is in a `bad state' and any transmission attempt is unsuccessful, while $c(t)=1$ denotes that the channel is in a `good state' and any packet transmitted at $t$ will be delivered to the estimator. We assume that $\{c(t)\}_{t\in\bZ_+}$ is a Markov process with parameters $p_{01}, p_{11} \in (0,1)$, where $p_{01}$ denotes probability that the channel is in state 1 at the next time step given that it is currently in state $0$, and $p_{11}$ denotes that the channel is in state $1$ at the next time step given that it is currently in state $1$. Let $u(t)\in \{0,1\}$ denote the action chosen by the sensor, where $u(t)=1$ corresponds to the sensor's decision to attempt a packet transmission at time $t$ while $u(t)=0$ indicates no transmission attempt by the sensor at time $t$. We assume that each transmission attempt consumes $\lambda$ units of power\slash resource.~Let $y(t) \in \bR \cup \Xi$ denote the observation made by the estimator at time $t$, where the event $y(t) = \Xi$ denotes that the estimator did not receive any data packet. This may occur either because no transmission attempt was made by the sensor, or an attempt was made but was not successful since the channel state was bad.~Let $\hat{x}(t)$ denote the estimate of $x(t)$ at the remote estimator.~The sensor does not observe the channel state $c(t)$.~If there is a successful transmission at time $t$, then the estimator sends an acknowledgment to the sensor. Hence, if $u(t)=1$, then the channel state $c(t)$ at $t$ is known to the sensor at time $t+1$. However, if $u(t)=0$, then no such acknowledgment is sent, so no channel state information is delivered to the transmitter. We let $z(t)$ denote the delayed channel state information delivered to the sensor upon a transmission attempt made at $t-1$. The information delay here is of one time unit. 
\subsection{Decision makers}\label{sec:dec_make}

In this section, we formulate the optimization problem of interest. We begin by describing the information set available with the two decision makers, i.e., the sensor and the estimator. 

\emph{Information available at the sensor}: At time $t$, the sensor has access to $\{x(s)\}_{s=0}^t \cup \{u(s), z(s), y(s)\}_{s=0}^{t-1}$. Let $\cF(t):= \sigma(\{x(s), u(s), z(s), y(s)\}_{s=0}^{t-1}, x(t)\})$, $t \in \bN$. We define the channel belief state $b(t) : =\bE[c(t) \mid \cF(t)]$. Thus, the information available at the sensor at time $t$ is,
\al{
\cI^s(t)=\lf(\{x(s),u(s),z(s),y(s),b(s)\}_{s=0}^{t-1}, x(t),b(t)\rt).\label{eq:info_sched}
}
The sensor at time $t$ makes decision $u(t)$ according to the information available to it till time $t$, i.e.,
\al{
u(t) = f_t(\cI^s(t)),\label{eq:u}
}
where $f_t : \cI^s(t) \rightarrow u(t)$, $u(t)\in \{0,1\}$ is a measurable function. The collection $f:= (f_0,f_1,\ldots)$ is a scheduling policy that at each time $t$ picks the action of whether or not to transmit a packet depending on the information $\cI^s(t)$ available.

\emph{Information available at the estimator}: At each time $t$ the estimator has access to the following information,
\al{
\cI^e(t)= \lf(\{y(s)\}_{s=0}^{t}\rt).
}
Note that although $y(t)$ is known to the estimator at time $t$, it does not know $z(t)$. This is because to compute $z(t)$, the estimator requires the knowledge of $u(t)$ since $z(t)= c(t)$ if $u(t)=1$ and $y(t) \neq \Xi$, else $z(t)=\Tilde{\Xi}$, where $\Tilde{\Xi}$ implies that no channel information is known. The sensor can, however, estimate $z(t)$ at time $t+1$ based on the feedback $y(t)$ from the estimator and $u(t)$.

The estimator produces its state estimate according to the information available to it at time $t$, i.e.,
\al{
\hat{x}(t)= g_t(\cI^e(t)),\label{eq:hatx}
}
where $g_t: \cI^e(t) \rightarrow \hat{x}(t) \in \bR$ is also a measurable function. The collection $g:= (g_0,g_1,\ldots)$ constitutes an estimation strategy for the estimator.

\subsection{Optimization Problem}
We consider the following optimization problem.



\emph{Problem 1:} For the model described in Section~\ref{sec:prob_form}, we consider here the problem of designing a scheduling policy $f$ for the sensor and an estimation strategy $g$ in order to solve the following \emph{discounted cost} problem:
\al{
	\min_{(f,g)}~\bE_{(f,g)} \left(\sum_{t=0}^{\infty} \beta^{t}\left((x(t)-\hat{x}(t))^2+\lambda u(t) \right)\right),\label{def:obj}
}
where$\bE_{(f,g)}$ denotes that the expectation is taken with respect to the measure induced by the policies $(f,g)$ and $\beta \in (0,1)$ is the discount factor. Also, recall that $\lambda>0$ denotes the number of units of power consumed in a transmission attempt.

\section{Preliminary Results}

We first show that we can restrict our attention to scheduling policies of the form discussed in the following lemma.
\begin{lemma}
    For the purpose of solving~\eqref{def:obj} and for any estimation strategy of the form~\eqref{eq:hatx}, there is no loss of optimality in restricting the class of scheduling policies~\eqref{eq:u} to the following form:
    \al{
    u(t)=f_t(x(t),b(t),\{y(s)\}_{s=0}^{t-1}).\label{eq:u_restrict}
    }
\end{lemma}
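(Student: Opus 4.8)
The plan is to recast the sensor's decision problem, with the estimator $g$ held fixed, as a fully observed discounted-cost Markov decision process driven by a \emph{reduced information state}, and then invoke the optimality of state-feedback (Markov) policies for such MDPs. Concretely, I would introduce $\cI^r(t) := \lf(x(t), b(t), \{y(s)\}_{s=0}^{t-1}\rt)$, which is a sub-tuple of $\cI^s(t)$ in~\eqref{eq:info_sched} and hence $\cI^s(t)$-measurable. Since the belief variables $\{b(s)\}$ are $\cF(t)$-measurable, we have $\sigma(\cI^s(t)) = \cF(t)$ and therefore $b(t) = \bE[c(t)\mid \cI^s(t)]$. The key observation enabling the reduction is that, because $b(t)$ is \emph{retained} in $\cI^r(t)$, the tower property gives $\bP(c(t)=1 \mid \cI^r(t)) = \bE[b(t)\mid \cI^r(t)] = b(t)$; thus conditioning down to the coarser state $\cI^r(t)$ still leaves $c(t)$ distributed as $\mathrm{Bernoulli}(b(t))$, so no channel information is sacrificed by discarding the belief history.

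Next I would verify that $\{\cI^r(t)\}$ is a controlled Markov chain, i.e.\ the conditional law of $\cI^r(t+1)$ given $(\cI^s(t), u(t))$ depends only on $(\cI^r(t), u(t))$, by treating its three components separately. (i) By~\eqref{eq:source}, $x(t+1) = a x(t) + w(t)$ depends only on $x(t)$ and fresh noise. (ii) The newly appended observation satisfies $y(t) = x(t)$ when $u(t)=1$ and $c(t)=1$, and $y(t) = \Xi$ otherwise, so its conditional law given $(\cI^r(t), u(t))$ is fixed by $x(t)$, $u(t)$, and the $\mathrm{Bernoulli}(b(t))$ law of $c(t)$. (iii) For the belief I would show $b(t+1)$ is a deterministic function of $(b(t), u(t), z(t+1))$: if $u(t)=0$ then no channel feedback arrives and $b(t+1) = b(t)p_{11} + (1-b(t))p_{01}$, whereas if $u(t)=1$ the acknowledgment reveals $z(t+1)=c(t)$ and $b(t+1) = p_{c(t)1}$. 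Since the feedback $z(t+1)$ (equivalently, the event $\{y(t)\neq\Xi\}$ when $u(t)=1$) is a function of $c(t)\sim\mathrm{Bernoulli}(b(t))$, the conditional distribution of $b(t+1)$ depends only on $(b(t), u(t))$. I would also note that $x(t+1)$ and $y(t)$ carry no information about $c(t+1)$ beyond $z(t+1)$, so the Markov reduction is lossless.

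I would then establish that the per-stage cost is, in expectation, a function of $(\cI^r(t), u(t))$ only. With $g$ fixed, $\hat{x}(t) = g_t\lf(\{y(s)\}_{s=0}^{t-1}, y(t)\rt)$ by~\eqref{eq:hatx}, and conditioned on $(\cI^r(t), u(t))$ the sole randomness in $y(t)$, and hence in $\hat{x}(t)$, enters through $c(t)\sim\mathrm{Bernoulli}(b(t))$. Consequently $\bE\big[(x(t)-\hat{x}(t))^2 + \lm u(t)\,\big|\,\cI^s(t), u(t)\big]$ is a measurable function of $(\cI^r(t), u(t))$ alone. Having exhibited a controlled-Markov state whose transition kernel and expected stage cost depend on the history only through $\cI^r(t)$, I would close by appealing to the standard dynamic-programming result that, for an infinite-horizon discounted MDP~\cite{Hernandez2012discrete}, there is no loss of optimality in restricting to policies measurable with respect to the current sufficient state: the value function for~\eqref{def:obj} depends on $\cI^s(t)$ only through $\cI^r(t)$, and a cost-minimizing action exists as a function of $\cI^r(t)$, which is precisely the form~\eqref{eq:u_restrict}.

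The main obstacle I expect is the sufficiency/belief-update step (ii)--(iii): one must check carefully that neither the fresh observation $y(t)$ nor the next source sample $x(t+1)$ leaks any channel information beyond the one-step feedback $z(t+1)$, and that collapsing to $\cI^r(t)$ (which drops $\{u(s),z(s),b(s),x(s)\}_{s=0}^{t-1}$) preserves the conditional $\mathrm{Bernoulli}(b(t))$ law of $c(t)$. Both hinge on the fact that $b(t)$ itself is kept in the reduced state, together with the mutual independence of the AR noise process and the channel chain; once these are pinned down, the remaining passage to Markov-policy optimality is routine.
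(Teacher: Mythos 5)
Your proposal is correct and follows essentially the same route as the paper: both show that $(x(t),b(t),\{y(s)\}_{s=0}^{t-1})$ is an information state by verifying that its transition law and the expected per-stage cost depend on the full history only through this tuple (using the observation law driven by $c(t)\sim\mathrm{Bernoulli}(b(t))$ and the belief update), and then invoke standard MDP sufficiency of state-feedback policies. Your additional tower-property and component-wise bookkeeping merely makes explicit what the paper's factorization of $\bP(\Tilde{\cI}^s(t+1)\mid\cI^s(t),u(t))$ leaves implicit.
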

\begin{proof}
    We will prove that $\{(x(t),b(t),\{y(s)\}_{s=0}^{t-1})\}$ is a Markov decision process (MDP) controlled by $u(t),t\geq 0$. For this, it is sufficient to show that $\Tilde{\cI}^s(t):=(x(t),b(t),\{y(s)\}_{s=0}^{t-1})$ is an information state~\cite{Kumar2015stochastic} at the sensor, i.e., the following two conditions are satisfied:

    \begin{enumerate}
        \item[(i)] $\bP(\Tilde{\cI}^s(t+1) \mid \cI^s(t),u(t)) = \bP(\Tilde{\cI}^s(t+1) \mid \Tilde{\cI}^s(t),u(t))$, 

        \item[(ii)] $\bE[(x(t)-\hat{x}(t))^2 + \lambda u(t) \mid \cI^s(t), u(t)] \\
       \mbox{ }  = \bE[(x(t)-\hat{x}(t))^2 + \lambda u(t) \mid \Tilde{\cI}^s(t), u(t)]$.
    \end{enumerate}
    
    We consider (i) first.
    \al{
    & \bP(\Tilde{\cI}^s(t+1) \mid \cI^s(t),u(t)) \notag\\
    & = \bP(x(t+1) \mid x(t)) \bP(b(t+1) \mid b(t),y(t),u(t)) \notag\\
    & \times \bP(y(t) \mid x(t),b(t),u(t)) \mathds{1}({\{\{\Tilde{y}(s)\}_{s=0}^{t-1} = \{\Check{y}(s)\}_{s=0}^{t-1}\}})\notag\\
    & = \bP(\Tilde{\cI}^s(t+1) \mid \Tilde{\cI}^s(t),u(t)), \notag
    }
    where $\{\Tilde{y}(s)\}_{s=0}^{t-1}$ and $\{\Check{y}(s)\}_{s=0}^{t-1}$ are any given realizations of the random variables $\{{y}(s)\}_{s=0}^{t-1}$ corresponding to $\Tilde{\cI}^s(t+1)$ and $\cI^s(t)$, respectively. The first equality follows from the observation that 
    \al{
    &\bP(y(t) = y\mid x(t),b(t),u(t))\notag\\
    &=
    \begin{cases}
    b(t) &\mbox{if } u(t)=1,y \neq \Xi,\\
    (1-b(t)) &\mbox{if } u(t)=1,y = \Xi,\\
    1 &\mbox{if } u(t)=0,y = \Xi,\\
    0 &\mbox{otherwise}.\label{eq:y}
\end{cases}
    }
    Thus (i) is true. 

    Next, we consider (ii). From~\eqref{eq:hatx}, we have that the conditional expectation on the left hand side of (ii) depends upon $\bP(\cI^e(t) \mid \cI^s(t),u(t))$. Thus, (ii) follows  from~\eqref{eq:y}. This completes the proof of the lemma.
\end{proof}

For our convenience, we define a process $\{v(t)\}_{t \in \bZ_+}$ as follows:
\al{
& v(-1) =x_0, \label{eq:v(-1)} \\
& v(t)=
\begin{cases}
    av(t-1) &\mbox{ if } y(t) = \Xi,\\
    y(t) &\mbox{ otherwise} \label{eq:v}
\end{cases} \quad \text{for } t \geq 0.
}
Next, we define three more processes $\{e(t)\}_{t \in \bZ_+}$, $\{{e}^+(t)\}_{t \in \bZ_+}$, and $\{\hat{e}(t)\}_{t \in \bZ_+}$ as follows:
\al{
&e(t):=x(t)-av(t-1), {e}^+(t):=x(t)-v(t), \label{def:e}\\
& \hat{e}(t):=\hat{x}(t)-v(t). \notag
}
Note that $\{{e}^+(t)\}_{t \in \bZ_+}$ and $\{e(t)\}_{t \in \bZ_+}$ are related as 
\al{
& e^+(t)=
\begin{cases}
    e(t) &\mbox{ if } y(t) = \Xi,\\
    0 &\mbox{ otherwise,} \label{eq:e^+_t} 
\end{cases} \\
& e(t+1)=ae^+(t) + w(t). \label{eq:e^+_t,e_t}
}
Now, using~\eqref{eq:source} and~\eqref{eq:v(-1)}-\eqref{eq:v}, the evolution of the process $\{e(t)\}_{t \in \bZ_+}$ can be written as follows: 
\al{ 
& e(0) =0 \notag\\
	&e(t+1) =
	\begin{cases}
		ae(t)+w(t) & \text{if } y(t)=\Xi, \\
		w(t) & \text{otherwise,}\label{def:error_evolve}
	\end{cases} \quad \text{for } t \geq 0.
}

Since $x(t)-\hat{x}(t)={e}^+(t)-\hat{e}(t)$, we can equivalently write the single-stage cost as $(x(t)-\hat{x}(t))^2 + \lambda u(t)= ({e}^+(t)-\hat{e}(t))^2 + \lambda u(t)$.

\begin{definition}
    Policies $(f,g)$ and $(\Tilde{f}, \Tilde{g})$ are said to be \emph{equivalent} for the purpose of solving~\eqref{def:obj} if 
    \nal{
    & \bE_{(f,g)} \left(\sum_{t=0}^{\infty} \beta^{t}\left((x(t)-\hat{x}(t))^2+\lambda u(t) \right)\right) \\
    & = \bE_{(\Tilde{f},\Tilde{g})} \left(\sum_{t=0}^{\infty} \beta^{t}\left((x(t)-\hat{x}(t))^2+\lambda u(t) \right)\right).
    }
\end{definition}

In the following lemma, we show that the processes introduced above allow us to construct equivalent transmission and estimation strategies.
\begin{lemma} \label{lemma:tilde_fg}
    Consider Problem 1. Then, for scheduling and estimation strategies of the form~\eqref{eq:u_restrict} and~\eqref{eq:hatx}, respectively, there exist equivalent scheduling and estimation strategies of the following form:
    \al{
    u(t) &=\Tilde{f}_t(e(t),b(t),\{y(s)\}_{s=0}^{t-1}), \label{eq:u(e,b,y)} \\
    \hat{x}(t) &= \Tilde{g}_t(\{y(s)\}_{s=0}^{t}) + v(t). \label{eq:hatx_v}
    }
    
    Conversely, if the scheduling and estimation strategies have the form~\eqref{eq:u(e,b,y)} and~\eqref{eq:hatx_v}, respectively, then there exist equivalent scheduling and estimation strategies of the form~\eqref{eq:u_restrict} and~\eqref{eq:hatx}, respectively.
\end{lemma}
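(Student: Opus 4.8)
The plan rests on the observation that the auxiliary process $\{v(t)\}$ of~\eqref{eq:v(-1)}--\eqref{eq:v} is a deterministic, measurable function of the observation sequence $\{y(s)\}$ alone (together with the fixed initial condition $v(-1)=x_0$). First I would prove this by induction on $t$: the recursion~\eqref{eq:v} writes $v(t)$ purely in terms of $y(t)$ and $v(t-1)$, so the dependence on $\{y(s)\}_{s=0}^{t}$ propagates from the base case. Consequently both decision makers can reconstruct the values of $v$ they need, since the sensor's information $\cI^s(t)$ contains $\{y(s)\}_{s=0}^{t-1}$ and the estimator's information $\cI^e(t)$ contains $\{y(s)\}_{s=0}^{t}$. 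This adaptedness is exactly what makes the change of variables below legitimate in the decentralized setting.

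Given this, the forward direction is a substitution based on the defining identity $x(t)=e(t)+a\,v(t-1)$ read off from~\eqref{def:e}. Starting from $(f,g)$ of the form~\eqref{eq:u_restrict} and~\eqref{eq:hatx}, I would set
\nal{
\Tilde{f}_t(e,b,\{y(s)\}_{s=0}^{t-1}) &:= f_t\bl(e + a\,v(t-1),\,b,\,\{y(s)\}_{s=0}^{t-1}\br),\\
\Tilde{g}_t(\{y(s)\}_{s=0}^{t}) &:= g_t\bl(\{y(s)\}_{s=0}^{t}\br) - v(t),
}
with $v(t-1),v(t)$ evaluated from the indicated observation prefixes. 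Evaluating $\Tilde{f}_t$ at $e=e(t)$ returns exactly $f_t(x(t),b(t),\{y(s)\}_{s=0}^{t-1})=u(t)$, and $\Tilde{g}_t(\{y(s)\}_{s=0}^{t})+v(t)=g_t(\{y(s)\}_{s=0}^{t})=\hat{x}(t)$, so the new strategies reproduce the same action and estimate at every time and on every sample path. The converse is the inverse substitution: from $(\Tilde f,\Tilde g)$ of the form~\eqref{eq:u(e,b,y)} and~\eqref{eq:hatx_v}, define $f_t(x,b,\cdot):=\Tilde{f}_t(x - a\,v(t-1),b,\cdot)$ and $g_t(\cdot):=\Tilde{g}_t(\cdot)+v(t)$, and the same two identities recover the original $u(t)$ and $\hat{x}(t)$.

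To conclude equivalence, I would run a coupling argument: realize the trajectories under $(f,g)$ and under $(\Tilde f,\Tilde g)$ on a common probability space driven by the same source noise $\{w(t)\}$ and the same channel transitions. Since the transition kernels of $x(\cdot),c(\cdot),b(\cdot)$ and the observation law~\eqref{eq:y} are unaffected by the representation, and since the two representations prescribe identical $u(t)$ and $\hat{x}(t)$ whenever the histories coincide, an induction on $t$ shows that the full histories coincide almost surely. Hence the single-stage costs $(x(t)-\hat{x}(t))^2+\lambda u(t)$ agree pathwise, and the discounted sums in~\eqref{def:obj} have equal expectation, which is precisely equivalence in the sense of the preceding definition.

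I expect the only delicate point to be the information-adaptedness bookkeeping rather than any hard estimate: one must verify that each redefined strategy depends solely on the information its decision maker actually holds, which reduces to the adaptedness of $v(t)$ to $\{y(s)\}$ established in the first step. The algebraic identities $x(t)=e(t)+a\,v(t-1)$ and $\hat{x}(t)=\Tilde g_t+v(t)$ make everything else routine, and the bijectivity of the substitution is what delivers both directions of the lemma simultaneously.
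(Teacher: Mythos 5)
Your proposal is correct and takes essentially the same route as the paper: both hinge on the measurability of $v(t)$ with respect to $\sigma(\{y(s)\}_{s=0}^t)$ and then perform the bijective substitution $x(t)=e(t)+a\,v(t-1)$, $\hat{x}(t)=\Tilde{g}_t+v(t)$ in each direction (the paper writes $v(t)=h_t(\{y(s)\}_{s=0}^t)$ explicitly as a function of the observations). Your closing coupling argument just spells out what the paper summarizes as ``equivalent by construction.''
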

\begin{proof}
    First, we note from~\eqref{eq:v} that $v(t)$ is measurable with respect to $\sigma(\{y(s)\}_{s=0}^t)$. As a result, at time $t$ the sensor can compute $e(t)$ because it has access to $x(t)$ and $\{y(s)\}_{s=0}^{t-1}$ (and hence $v(t-1)$). 
    
    We use $\Tilde{f},\Tilde{g}$ to denote $\Tilde{f}:=(\Tilde{f}_0,\Tilde{f}_1,\ldots)$ and $\Tilde{g}:=(\Tilde{g}_0,\Tilde{g}_1,\ldots)$. Now, we consider the following two cases.

    1) Given $f,g$ of the form~\eqref{eq:u_restrict},~\eqref{eq:hatx}, respectively, we construct
    \al{
    & \Tilde{f}_t(e(t),b(t),\{y(s)\}_{s=0}^{t-1}) \notag \\
    & = f_t(e(t)+a h_{t-1}(\{y(s)\}_{s=0}^{t-1},b(t),\{y(s)\}_{s=0}^{t-1}) \notag \\
    & \text{and} \notag \\
    & \Tilde{g}_t(\{y(s)\}_{s=0}^{t}) =g_t(\{y(s)\}_{s=0}^{t})-h_t(\{y(s)\}_{s=0}^{t}).\notag
    }
    Then, by construction $\Tilde{f}, \Tilde{g}$ are equivalent to $f,g$, respectively.

    2) Given $\Tilde{f},\Tilde{g}$ of the form~\eqref{eq:u(e,b,y)},~\eqref{eq:hatx_v}, respectively, we define
    \al{
    & f_t(x(t),b(t),\{y(s)\}_{s=0}^{t-1}) \notag\\
    & = \Tilde{f}_t(x(t)-a h_t(\{y(s)\}_{s=0}^{t-1}), b(t), \{y(s)\}_{s=0}^{t-1}) \notag \\
    & \text{and} \notag \\
    & g_t(\{y(s)\}_{s=0}^{t}) =\Tilde{g}_t(\{y(s)\}_{s=0}^{t}) + h_t(\{y(s)\}_{s=0}^{t}).\notag
    }
    Then, by construction $f,g$ are equivalent to $\Tilde{f}, \Tilde{g}$, respectively. This concludes the proof.
\end{proof}
An immediate consequence of the above lemma is that we can now assume that the sensor observes the process $\{e(t)\}_{t \in \bZ_+}$ instead of $\{x(t)\}_{t \in \bZ_+}$, and the estimator generates the following estimate at time $t$,
\al{
\hat{e}(t)= \hat{x}(t)-v(t) = \Tilde{g}_t(\{y(s)\}_{s=0}^{t}). \label{eq:est_e}
}
Consequently, we will restrict ourselves to scheduling strategies of the form~\eqref{eq:u(e,b,y)}, and estimation strategies of the form~\eqref{eq:est_e}. Next, we note that at the beginning of time $t$, the sensor knows $(e(t),b(t),\{y(s)\}_{s=0}^{t-1})$, while the estimator knows only $\{y(s)\}_{s=0}^{t-1}$. This type of information structure has been referred to as  \emph{partial history sharing}~\cite{nayyar2013decentralized}. One approach to solving such a decentralized stochastic control problem is to view the problem from the perspective of a coordinator (or decision maker) who knows the \emph{common information}~\cite{nayyar2013decentralized} among all the other decision makers. We will use this approach as is discussed in the next section. In our case, the estimator acts as a coordinator since it knows the common information $\{y(s)\}_{s=0}^{t-1}$ between the sensor and the estimator.

\subsection{Equivalent Problem}
We will now formulate an equivalent optimization problem from the estimator's perspective for the model described in Section~\ref{sec:prob_form}. At the end of time $t-1$, the estimator chooses an estimate based on the information $\{y(s)\}_{s=0}^{t-1}$ as follows:
\nal{
\hat{e}(t-1) = \Tilde{g}_t(\{y(s)\}_{s=0}^{t-1}).
}
Next, at the beginning of time $t$, the estimator chooses mapping $\Gamma_t: \bR \times [0,1] \rightarrow u_t \in \{0,1\}$ utilizing the information $\{y(s)\}_{s=0}^{t-1}$. This mapping is described by 
\al{
\Gamma_t = \ell_t(\{y(s)\}_{s=0}^{t-1}). \label{eq:l}
}
Then, at time $t$, the sensor makes the decision $u(t)=\Gamma_t(e(t),b(t))$. We refer to $\Gamma_t$ as the \emph{estimator's prescription} to the sensor. In addition, we call $\ell:=\{\ell_0,\ell_1,\ldots\}$ as the coordination strategy. We now formulate the following optimization problem for the estimator.

\emph{Problem 2:} For the model given in Section~\ref{sec:prob_form}, the goal is to find coordination strategy $\ell$, and an estimation strategy $\Tilde{g}$ so as to solve
\al{
	\min_{(\ell,\Tilde{g})}~\bE_{(\ell,\Tilde{g})} \left(\sum_{t=0}^{\infty} \beta^{t}\left(({e}^+(t)-\hat{e}(t))^2+\lambda u(t) \right)\right).\label{def:new_obj}
}
The following lemma shows the equivalence of Problem 1 with Problem 2.
\begin{lemma}
    Given scheduling and estimation strategies $(\Tilde{f}, \Tilde{g})$ corresponding to Problem 1, there exist coordination and estimation strategies $(\ell,\Tilde{g})$ for Problem 2 that achieve the same expected cost as that incurred by $(\Tilde{f}, \Tilde{g})$ in Problem 1. 

    Conversely, for any $(\ell,\Tilde{g})$ in Problem 2, there exist $(\Tilde{f}, \Tilde{g})$ for Problem 1 that achieve the same expected cost as that incurred by $(\ell,\Tilde{g})$ in Problem 2.
\end{lemma}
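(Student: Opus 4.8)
The plan is to exploit the fact that the only structural difference between the two formulations is \emph{who} holds the common information $\{y(s)\}_{s=0}^{t-1}$ and at which stage it is used to resolve the dependence on the sensor's private information $(e(t),b(t))$. In Problem~1 the sensor evaluates $\Tilde{f}_t$ directly on both its private and common information, whereas in Problem~2 the coordinator (estimator) first uses the common information to select a prescription $\Gamma_t$, and the sensor then applies $\Gamma_t$ to its private information. I would therefore establish the equivalence through a \emph{currying/uncurrying} bijection between sensor scheduling strategies $\Tilde{f}_t$ and coordination strategies $\ell_t$, while keeping the estimation strategy $\Tilde{g}$ unchanged in both directions, and then argue that this correspondence leaves the joint law of $\{(e(t),b(t),y(t),\hat{e}(t),u(t))\}_t$ invariant, so that the discounted costs in~\eqref{def:obj} and~\eqref{def:new_obj} coincide.

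For the forward direction, given $(\Tilde{f},\Tilde{g})$ for Problem~1 I would define the coordination strategy by currying: for each realization of the common information, set
\[
\ell_t(\{y(s)\}_{s=0}^{t-1}) := \Big[(e,b)\mapsto \Tilde{f}_t(e,b,\{y(s)\}_{s=0}^{t-1})\Big],
\]
so that the induced prescription $\Gamma_t=\ell_t(\{y(s)\}_{s=0}^{t-1})$ yields the action $u(t)=\Gamma_t(e(t),b(t))=\Tilde{f}_t(e(t),b(t),\{y(s)\}_{s=0}^{t-1})$. Retaining $\Tilde{g}$, the action and the estimate are, on every sample path, identical functions of the same primitive randomness $(x(0),\{w(t)\}_t,\{c(t)\}_t)$ as in Problem~1. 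Since the evolution of $e(t)$, $b(t)$ and $y(t)$ in~\eqref{def:error_evolve} and~\eqref{eq:y} depends on the past only through $u(t)$ and the exogenous noise, the induced measure on trajectories is unchanged, and hence the two discounted costs are equal.

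The converse is the symmetric uncurrying step: given $(\ell,\Tilde{g})$ for Problem~2 I would define
\[
\Tilde{f}_t(e(t),b(t),\{y(s)\}_{s=0}^{t-1}) := \big[\ell_t(\{y(s)\}_{s=0}^{t-1})\big](e(t),b(t)),
\]
which reproduces exactly the Problem~2 action $u(t)=\Gamma_t(e(t),b(t))$, and again retain $\Tilde{g}$. The same coupling on a common probability space then gives equality of the costs, completing the two-way correspondence.

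The main technical obstacle is not the algebra but the \emph{measurability} bookkeeping underlying the coordinator view of~\cite{nayyar2013decentralized}: one must check that the curried map $\ell_t$, taking values in the space of prescriptions $\Gamma_t:\bR\times[0,1]\to\{0,1\}$, is itself a measurable function of the common information, and conversely that uncurrying a measurable $\ell_t$ yields a jointly measurable $\Tilde{f}_t$. I would dispatch this by equipping the prescription space with the $\sigma$-algebra generated by the evaluation maps $\Gamma\mapsto\Gamma(e,b)$, so that measurability of $\ell_t$ reduces, for each fixed $(e,b)$, to measurability of $\{y(s)\}_{s=0}^{t-1}\mapsto\Tilde{f}_t(e,b,\{y(s)\}_{s=0}^{t-1})$, which follows from the joint measurability of $\Tilde{f}_t$; the equality of costs is then immediate from the sample-path coupling described above.
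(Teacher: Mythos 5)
Your proposal is correct and follows essentially the same route as the paper: both directions are handled by the currying map $\ell_t(\{y(s)\}_{s=0}^{t-1})=\Tilde{f}_t(\cdot,\cdot,\{y(s)\}_{s=0}^{t-1})$ and its inverse, with $\Tilde{g}$ retained, so that the induced sample-path behaviour and hence the costs coincide. Your additional measurability bookkeeping (equipping the prescription space with the evaluation $\sigma$-algebra) is a detail the paper leaves implicit but does not change the argument.
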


\begin{proof}
    Given $\Tilde{f}, \Tilde{g}$ of the form~\eqref{eq:u(e,b,y)},~\eqref{eq:hatx_v}, respectively, define the coordination strategy for Problem 1 as follows,
    \nal{
    \ell_t(\{y(s)\}_{s=0}^{t-1})=\Tilde{f}(\cdot,\cdot,\{y(s)\}_{s=0}^{t-1}).
    }
    Then, by construction, $\ell,\Tilde{g}$ achieve the same expected value in Problem 2 as $\Tilde{f}, \Tilde{g}$ in Problem 2.

    Now, for the converse part, consider any $\ell,\Tilde{g}$ of the form~\eqref{eq:l}~\eqref{eq:hatx_v}, respectively, corresponding to Problem 1. Construct a scheduling strategy by letting 
    \nal{
    \Tilde{f}(\cdot,\cdot,\{y(s)\}_{s=0}^{t-1})=\ell_t(\{y(s)\}_{s=0}^{t-1}).
    }
    The proof then follows from construction.
\end{proof}
As a result of the above lemma, we will now focus on solving Problem 2. We show here that the estimator's optimization problem (Problem 2) can be formulated as a POMDP~\cite{Krishnamurthy2016partially}.

\textbf{POMDP formulation of Problem 2}: We begin by describing the components of this POMDP.

1) \emph{States}: The pre-scheduling state at time $t$ is $(e(t),b(t))$. The post-scheduling state is $(e^+(t),b(t))$.

2) \emph{Actions}: $\Gamma_t$ is the pre-scheduling action and $\hat{e}(t)$ is the post-scheduling action at time $t$.

3) \emph{Evolution of states}: The evolution of the state $(e^+(t),b(t))$ depends on $(e(t),b(t))$ and the pre-scheduling action $\Gamma_t$ as shown in~\eqref{eq:e^+_t}.
~The post-scheduling state then evolves from $(e^+(t),b(t))$ to $(e(t+1),b(t+1))$, see \eqref{eq:e^+_t,e_t}, where the evolution of $b(t+1)$ is as below.
\al{ \label{def:beliefevolv}
	b(t+1)  = 
	\begin{cases}
		p_{11} & \text{if } u(t)=1, c(t) = 1, \\
		p_{01} & \text{if } u(t)=1, c(t) = 0, \\
        \cT(b(t)) & \text{if } u(t)=0,
	\end{cases}
 }

4) \emph{Observation}: The observation $y(t)$ depends on $(e(t),b(t))$ and $\Gamma_t$ as follows:
\al{
&\bP(y(t) =y \mid e(t),b(t),\Gamma_t) \notag\\
&=
\begin{cases}
    b(t) &\mbox{ if } \Gamma_t(e(t),b(t))=1, y \neq \Xi,\\
    (1-b(t)) &\mbox{ if } \Gamma_t(e(t),b(t))=1, y = \Xi,\\
    1 &\mbox{ if } \Gamma_t(e(t),b(t))=0, y = \Xi,\\
    0 &\mbox{ otherwise.} \label{eq:prob_y}
\end{cases}
}

5) \emph{Single-Stage or Instantaneous cost}: The transmission power consumed is $\lambda$ if $\Gamma_t(e(t),b(t))=1$, otherwise it is 0. The squared estimation error is $({e}^+(t)-\hat{e}(t))^2$ and is a function of the post-scheduling state and post-scheduling action.

\textbf{Belief space} We now discuss the space in which $b(t)$ resides. We fix an initial state $b(0) \in \cB$. If no transmissions are attempted on the interval $[0, t_0 -1]$, i.e., $u(s) = 0$ for $0 \leq s \leq t_0 - 1$, $t_0 \in \bN$, then it follows from~\eqref{def:beliefevolv} that 
\nal{b(t_0) = \cT^{t_0}(b(0)),
}
where $\cT^{t_0}(\cdot)$ denoted the $t_0$-fold composition of $\cT$ and $T^0(b):= b$. Moreover, if a transmission is attempted at time $t_0$, i.e., $u(t_0) = 1$, then $b(t_0 +1) \in \{p_{01}, p_{11}\}$. Hence for $t > t_0$, $b(t) \in \{\cT^{s}(p_{01})\}_{s \in \bZ_+} \cup \{\cT^{s}(p_{11})\}_{s \in \bZ_+}$. Combining these possibilities, the belief space denoted by $\cB$ can be written as follows $b(t) \in \cB:= \{\cT^{s}(b(0))\}_{s \in \bZ_+} \cup \{\cT^{s}(p_{01})\}_{s \in \bZ_+} \cup \{\cT^{s}(p_{11})\}_{s \in \bZ_+}$. Also, we note that $\cB \subseteq [0,1]$.

As a result of the above POMDP formulation, we can define the estimator's information states~\cite{Kumar2015stochastic} as follows.
\begin{definition} \label{def:pre_post_beliefs}
\begin{enumerate}[(i)]
    \item The ``pre-scheduling belief'' at time $t$, namely $b^{(pr.)}_t(\cdot,\cdot): \bR \times [0,1] \rightarrow \bR_+$ is the conditional probability density of $(e(t),b(t)) \in \bR \times [0,1]$ conditioned on $\{y(s),\Gamma_s\}_{s=0}^{t-1}$.
    \item The ``post-scheduling belief'' at time $t$, $b^{(po.)}_t(\cdot,\cdot): \bR \times [0,1] \rightarrow \bR_+$, is the conditional probability density of $(e^+(t),b(t)) \in \bR \times [0,1]$ conditioned on $\{y(s),\Gamma_s\}_{s=0}^{t}$.
\end{enumerate}
\end{definition}
These beliefs can be recursively updated as discussed next.
\begin{lemma}\label{lemma:prepost_evolve}
    The beliefs $b^{(pr.)}_t$ and $b^{(po.)}_t$ evolve as described here below.
    \al{
    1)~ &b^{(pr.)}_{t+1}(e,b)=(1/|a|) \int_{e' \in \bR} \sum_{b' \in \cB} \bP(b_{t+1}=b \mid b_t =b') \notag \\
    & \qquad \qquad \times \varphi(e-e') b^{(po.)}_t(e'/a, b') \, de' \notag \\
    & = \sum_{b' \in \cB} \bP(b_{t+1}=b \mid b_t =b') \eta_t(e,b') \ast \varphi \notag,}
    where $a$ is the AR process parameter~\eqref{eq:source}, $\eta_t(e,b') = (1/|a|) b^{(po.)}_t(e/a,b')$ is conditional probability density of $(ae^+(t),b(t))$ (whereas $b^{(po.)}_t(e,b)$ is the conditional probability density of $(e^+(t),b(t))$), $\ast$ is the convolution operator, and $\varphi$ is the pdf of $w(t)$.~We denote this transformation as $b^{(pr.)}_{t+1}=F^{(1)}(b^{(po.)}_t)$. 
    \al{
    2)~&b^{(po.)}_t(e,b) \notag \\ 
    &= 
    \begin{cases}
    \delta_{(0,p_{11})}(e,b) &\mbox{ if } y(t) \neq \Xi,\\
    \frac{p(b,\Gamma_t(e,b)) b^{(pr.)}_t(e,b)}{\int_{e'} \sum_{b'} p(b',\Gamma_t(e',b')) b^{(pr.)}_t(e',b') \, de'} &\mbox{ if } y(t) = \Xi,
    \end{cases} \label{eq:theta-evolve}
    }
    where $p(b,\Gamma(e,b)):= \bP(y=\Xi \mid b, \Gamma(e,b))$ is the probability that a packet is dropped when the belief state is $b$ and the decision taken is $\Gamma(e,b)$.~We denote this transformation as $b^{(po.)}_{t}=F^{(2)}(b^{(pr.)}_t,\Gamma_t,y(t))$.
\end{lemma}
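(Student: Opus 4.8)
The plan is to treat this as a routine two-step recursive-filtering derivation for the estimator's POMDP: a \emph{prediction} (time-update) step for part~1 and a \emph{measurement} (Bayesian-update) step for part~2. Both follow from the definitions of the information states in Definition~\ref{def:pre_post_beliefs} together with the law of total probability, Bayes' rule, and the transition/observation primitives \eqref{eq:e^+_t}, \eqref{eq:e^+_t,e_t}, \eqref{def:beliefevolv} and \eqref{eq:prob_y}. The one structural fact I would isolate and reuse is the conditional-independence factorization of the model: conditioned on the post-scheduling state $(e^+(t),b(t))$ and the prescription, the AR noise $w(t)$ that drives $e(t+1)$ is independent of the channel randomness that drives $b(t+1)$, and both are conditionally independent of the common data $\{y(s),\Gamma_s\}_{s\le t}$ given the current state. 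This is inherited from the model's primitive assumptions (i.i.d.\ AR noise, Markov channel).

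For part~1, I would start from the definition of $b^{(pr.)}_{t+1}$ as the conditional density of $(e(t+1),b(t+1))$ given $\{y(s),\Gamma_s\}_{s=0}^{t}$, and observe that this is exactly the conditioning set defining $b^{(po.)}_t$, with no new observation acquired in between. Hence I marginalize over the post-scheduling state $(e^+(t),b(t))=(\xi,b')$ weighted by $b^{(po.)}_t(\xi,b')$. By the factorization above the one-step kernel splits into a $b$-part and an $e$-part. For the $b$-part I substitute $\bP(b_{t+1}=b\mid b_t=b')$ read off from \eqref{def:beliefevolv}. For the $e$-part I use $e(t+1)=a\,e^+(t)+w(t)$ from \eqref{eq:e^+_t,e_t}: the deterministic scaling $\xi\mapsto a\xi$ contributes the Jacobian factor $1/|a|$ (this is the density $\eta_t$), and adding the independent noise $w(t)$ with density $\varphi$ turns the $e$-marginal into a convolution with $\varphi$. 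Collecting terms yields the integral form, and the change of variables $e'=a\xi$ (so $d\xi=de'/|a|$) converts it to the convolution form $b^{(pr.)}_{t+1}(e,b)=\sum_{b'}\bP(b_{t+1}=b\mid b_t=b')\,(\eta_t(\cdot,b')\ast\varphi)(e)$.

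For part~2, I would condition further on the newly observed $y(t)$ and split according to \eqref{eq:prob_y}. If $y(t)\neq\Xi$, the erasure-free observation can occur only when $\Gamma_t(e(t),b(t))=1$ and the channel is good, so the transmission succeeds and \eqref{eq:e^+_t} forces $e^+(t)=0$ with certainty. For the $b$-coordinate, a successful attempt is precisely the event that reveals the channel state, so by \eqref{def:beliefevolv} the belief propagated forward equals $p_{11}$; the residual uncertainty in $b(t)$ itself is immaterial to all future dynamics, which is why the post-scheduling belief is recorded as the single atom $\delta_{(0,p_{11})}$. If $y(t)=\Xi$, no packet is received and the only information gained is the likelihood of the erasure, so Bayes' rule gives a posterior proportional to the prior $b^{(pr.)}_t(e,b)$ times that likelihood, which by \eqref{eq:prob_y} is exactly the drop probability $p(b,\Gamma_t(e,b))$; normalizing by the integral of this product over $(e',b')$ yields the stated ratio.

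The step I expect to be the main obstacle is the $y(t)\neq\Xi$ branch of part~2. The $e$-coordinate genuinely collapses to $0$, but the $b$-coordinate needs care: a successful transmission does not literally make $b(t)=p_{11}$ (that value is $b(t+1)$), and the honest claim is that the posterior over $b(t)$ is irrelevant going forward because the ACK pins $b(t+1)=p_{11}$ regardless of $b(t)$. Making this ``harmless reduction to a point mass'' precise---and checking it is consistent with the transition kernel used in the part~1 prediction step---is the delicate point. A secondary, purely mechanical care point is the Jacobian/convolution bookkeeping in part~1, i.e.\ keeping the $1/|a|$ factor consistent between the integral and convolution forms under the change of variables $e'=a\xi$.
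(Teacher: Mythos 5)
The paper states this lemma without any proof, treating it as a standard POMDP filtering recursion, so there is no in-paper argument to compare against; your two-step derivation (Jacobian-plus-convolution prediction step from $e(t+1)=ae^+(t)+w(t)$ and the conditional independence of the AR noise from the channel randomness, followed by a Bayes update against the erasure likelihood \eqref{eq:prob_y}) is exactly the standard argument that establishes it, and the bookkeeping in both parts checks out. You have also correctly isolated the one genuinely loose point in the statement itself: in the $y(t)\neq\Xi$ branch the atom $\delta_{(0,p_{11})}$ records the \emph{updated} channel belief $b(t+1)=p_{11}$ rather than the literal posterior over $b(t)$, and this convention must be read consistently with the kernel $\bP(b_{t+1}=b\mid b_t=b')$ in part~1 (on the success branch that kernel must act as the identity on $p_{11}$, not as $\cT$, since the ACK already pins $b(t+1)$). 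Your proposed resolution --- that the posterior over $b(t)$ itself is immaterial to all future dynamics once the ACK fixes $b(t+1)$ --- is the right way to make this precise, and spelling it out would actually make the lemma cleaner than the paper's unproved statement.
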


We now describe the dynamic programming equations, the solution of which yields optimal policies for the estimator.

\begin{theorem} \label{thm:prepost-val-func}
    Consider Problem 2~\eqref{def:new_obj}. Then we can recursively define the value functions $W^{(1)}_t$ and $W^{(2)}_t$ as follows:
    \al{
    & W^{(1)}_0 (b^{(pr.)}) := 0, \label{eq:W0} \\
    & W^{(1)}_{t+1} (b^{(pr.)}) = \inf\limits_{\Gamma \in \mathcal{G}} \bE\lf[\lambda u + W^{(2)}_{t+1}(F^{(2)}(b^{(pr.)},\Gamma,y)\rt], \notag \\
    & W^{(2)}_{t+1} (b^{(po.)}) \notag \\
    & = \inf\limits_{\hat{e} \in \bR} \bE \lf[(e^+ - \hat{e})^2 + \beta W^{(1)}_t(F^{(1)}(b^{(po.)}))\rt], \label{eq:W2}
    }
    where $\mathcal{G}$ is the set of all measurable functions from $\bR \times [0,1]$ to $\{0,1\}$. Then, for each realization of pre-scheduling belief $b^{(pr.)}$, and post-scheduling belief $b^{(po.)}$, we can find an estimator's prescription $\Gamma\ust$, and an estimate $\hat{e}\ust$, respectively, whose performance is arbitrarily close to the optimal performance~\eqref{def:new_obj}~\cite[Prop. 9.19]{bertsekas1996stochastic}.
\end{theorem}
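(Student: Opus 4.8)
The plan is to exploit the fact that the preceding development has already reduced Problem~2 to a completely observed Markov decision process on the space of beliefs, so that the existence of near-optimal prescriptions and estimates follows from standard dynamic programming theory for Borel-space MDPs. Concretely, by the POMDP formulation of Problem~2 together with Lemma~\ref{lemma:prepost_evolve}, the pre-scheduling belief $b^{(pr.)}_t$ and the post-scheduling belief $b^{(po.)}_t$ are sufficient statistics: the conditional law of the next belief given the entire past depends only on the current belief and the action taken (via the kernels $F^{(1)}$ and $F^{(2)}$), and the expected single-stage cost is a function of the current belief and action alone. Hence the estimator (coordinator) faces a fully observed MDP whose state alternates between pre- and post-scheduling beliefs, with per-stage costs $\lambda u$ at the pre-scheduling stage and $(e^+-\hat e)^2$ at the post-scheduling stage.

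First I would verify that each period of the objective~\eqref{def:new_obj} decomposes into exactly these two stages, so that the one-period Bellman operator splits into the two pieces displayed in~\eqref{eq:W2}. Using the tower property of conditional expectation, I would rewrite the expected discounted cost as a nested expectation over the belief process and push the minimizations over $\Gamma$ and $\hat e$ inside, justified because the beliefs are information states. The discount factor $\beta$ multiplies the continuation value only at the transition from a post-scheduling belief back to the next pre-scheduling belief, which is precisely how it enters the recursion for $W^{(2)}$; the costs $\lambda u$ and $(e^+-\hat e)^2$ accrue within the same period and therefore carry no relative discount, matching~\eqref{eq:W2}.

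Next I would address convergence and optimality. Since the single-stage costs are nonnegative and $\beta\in(0,1)$, the composed one-period operator is monotone and its iterates, initialized at $W^{(1)}_0\equiv 0$, increase monotonically; the discounting keeps the iterates finite and drives them to the fixed point, which equals the optimal value of Problem~2. The remaining issue is to extract policies attaining this value. Because the pre-scheduling action is itself a measurable map $\Gamma\in\cG$ and the estimate ranges over $\bR$, the infima in~\eqref{eq:W2} need not be attained by measurable selectors; this is exactly the obstruction resolved by~\cite[Prop. 9.19]{bertsekas1996stochastic}, which guarantees, for each $\varepsilon>0$, a universally measurable $\varepsilon$-optimal prescription $\Gamma\ust$ and estimate $\hat{e}\ust$. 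Letting $\varepsilon\downarrow 0$ yields prescriptions and estimates whose performance is arbitrarily close to the optimum, which is the assertion of the theorem.

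The main obstacle I anticipate is the measure-theoretic bookkeeping rather than the algebra: one must check that the belief space $\cB$ and the associated space of densities form a Borel (or analytic) space, that the class $\cG$ admits a Borel structure under which $(\Gamma,y)\mapsto F^{(2)}(b^{(pr.)},\Gamma,y)$ and the cost integrands are lower semianalytic, and that the DP operators preserve this class so that~\cite[Prop. 9.19]{bertsekas1996stochastic} applies. This is also the reason the conclusion is phrased as ``arbitrarily close to'' rather than ``equal to'' the optimum: exact measurable selectors may fail to exist even when $\varepsilon$-optimal ones always do.
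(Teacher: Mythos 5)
Your proposal is correct and follows essentially the same route the paper relies on: the paper gives no explicit proof of this theorem, instead invoking the information-state structure established in Lemma~\ref{lemma:prepost_evolve} and the POMDP formulation, and then citing~\cite[Prop.\ 9.19]{bertsekas1996stochastic} for the existence of $\varepsilon$-optimal (universally measurable) selectors --- exactly the two ingredients your argument supplies. Your additional remarks on the placement of the discount factor, the monotone convergence of the iterates from $W^{(1)}_0 \equiv 0$, and the measurability caveats explaining the ``arbitrarily close to optimal'' phrasing are all consistent with the paper's intent.
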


\section{Jointly Optimal Policies} \label{sec:joint_optimality}
We will now prove structural results for an estimator and scheduler that are jointly optimal for Problem 1, cf.~\eqref{def:obj}. We begin by stating some assumptions that are required for the structural results. The following is a stability assumption and ensures existence of finite solutions for the average cost~\eqref{def:avg_obj} and $\beta$-discounted cost ~\eqref{def:obj} problems.

\begin{assumption}\label{assum:stabl}
    The Markovian channel probability $p_{01}$ and the system parameter $a$ satisfy
    \nal{
    a^2(1-p_{01})<1.
    }
\end{assumption}

The following is a common assumption on GE channels~\cite{Yao2022age, dutta2023optimal}. We will use this to analyze the structural properties of an optimal scheduling policy.

\begin{assumption}\label{assum:channel}
The Markovian channel parameters satisfy $p_{11} \geq p_{01}$. 
\end{assumption}


\begin{definition}[Symmetric unimodal about a center] \label{def:SU}
Let $r\in\mathbb{R}$. We define the set of symmetric unimodal functions about a center $r$ as
\nal{
& \mathcal{S}(r)
:=\big\{\nu:\mathbb{R}\to\mathbb{R}\ \big|\ \ 
|x-r|\le |y-r|\ \text{implies } \\
& \hspace{3.5cm} \nu(x)\ge \nu(y), \forall x,y\in\mathbb{R} \big\}.
}
Also, we use the convention that $\delta_r(\cdot) \in \cS(r)$. 
\end{definition}

The following theorem is the main result of this section.
\begin{theorem} \label{thm:opt_est}
    Let Assumptions~\ref{assump:noise}--\ref{assum:channel} hold. Then for the optimization Problem , cf.~\eqref{def:new_obj}, and hence, Problem 1, cf.~\eqref{def:obj}, we have
    \begin{enumerate}
        \item There exists an optimal estimation strategy such that $\hat{e}(t)=0$. More specifically, we have that $\hat{x}(-1) = x_0$ and for $t \geq 0$,
        \al{
        & \hat{x}(t)=
        \begin{cases}
        a\hat{x}(t-1) &\mbox{ if } y(t) = \Xi,\\
        x(t) &\mbox{ otherwise}.\label{eq:opt_est}
        \end{cases}
        }
        \item There is no loss of optimality in restricting the optimization of Problem 1 to the class of scheduling policies of the form~\eqref{eq:u(e,b,y)} to the following form
        \al{
        u(t)=\Tilde{f}_t(e(t),b(t)). \label{eq:opt_sch}
        }
        \item For each $e\in\bR$, there exists a threshold $b\ust(e)$ such that it is optimal to transmit only when the belief at the sensor $b \geq b\ust(e)$. Thus, there exists an optimal transmission strategy corresponding to~\eqref{def:obj} that exhibits a threshold structure. Moreover, an optimal transmission strategy in even in $e$.
    \end{enumerate}
\end{theorem}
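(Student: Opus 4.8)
The plan is to run the value-function recursion of Theorem~\ref{thm:prepost-val-func} and extract all three claims from it, exploiting one decisive simplification: once the estimate is pinned down, the coordinator's problem collapses to a \emph{fully observed} Markov decision process (MDP) for the sensor with state $(e,b)$. I would settle the estimate first, then perform this reduction to obtain Parts~2 and~3.

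For Part~1, note in \eqref{eq:W2} that the post-scheduling action $\hat e$ enters only the running quadratic $(e^+-\hat e)^2$ and does not appear inside $F^{(1)}(b^{(po.)})$, so the inner $\inf_{\hat e}$ is the elementary least-squares problem with minimizer the conditional mean $\hat e\ust=\bE_{b^{(po.)}}[e^+]$. It then suffices to show the post-scheduling error marginal is symmetric about the origin. I would prove this by induction on $t$: the base case is $e(0)=0$, a point mass at $0$; in the inductive step $F^{(1)}$ scales by $a$, sums over $b'$, and convolves with $\varphi$, and by Assumption~\ref{assump:noise} the convolution of two members of $\mathcal{S}(0)$ stays in $\mathcal{S}(0)$, while the erasure update \eqref{eq:theta-evolve} reweights an even density by an even multiplier equal to $1-b$ exactly on the transmit region; since transmission occurs on the large-$|e|$ tail (the threshold $b\ust(e)$ being nonincreasing in $|e|$, a by-product of the advantage analysis below), the reweighting only deflates the tails and keeps the density in $\mathcal{S}(0)$. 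Symmetry forces $\hat e\ust=0$; substituting into $\hat x(t)=\hat e(t)+v(t)$ with $v$ as in \eqref{eq:v} yields \eqref{eq:opt_est}, since $y(t)\neq\Xi$ gives $v(t)=y(t)=x(t)$ and hence $\hat x(t)=x(t)$, while $y(t)=\Xi$ gives $v(t)=av(t-1)$ and hence $\hat x(t)=a\hat x(t-1)$.

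For Parts~2 and~3 I would substitute $\hat e\ust=0$, so that $W^{(2)}_{t+1}(b^{(po.)})=\bE_{b^{(po.)}}[(e^+)^2]+\beta W^{(1)}_t(F^{(1)}(b^{(po.)}))$, and then argue that the per-state decision decouples. Because the sensor observes $(e,b)$ in full, the coordinator can delegate the MDP-optimal action to each state, giving $W^{(1)}_t(b^{(pr.)})=\bE_{b^{(pr.)}}[V_t(e,b)]$ for the sensor's MDP value $V_t$; in particular $W^{(1)}_t$ is \emph{linear} in the belief and the minimizing prescription is the state-feedback map $\Gamma\ust(e,b)=u^\star_{\mathrm{MDP}}(e,b)$, which depends on history only through $(e,b)$. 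This is Part~2. The MDP has running cost $e^2$ under $u=0$ and $(1-b)e^2+\lambda$ under $u=1$, with transition $b\mapsto\cT(b)$ when $u=0$ and $b\mapsto\{p_{11},p_{01}\}$ (error reset on success) when $u=1$, per \eqref{def:beliefevolv} and \eqref{eq:e^+_t}. Evenness in $e$, hence an even threshold $b\ust(e)$, follows by induction: the cost is even in $e$ and, since $w$ is symmetric, the law of $ae+w$ is invariant under $e\mapsto-e$, so $V_t$ is even and the prescription used above is indeed even, closing the induction for Part~1.

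The threshold in $b$ is the crux. Writing the transmit advantage $A_t(e,b)=Q_t(e,b,0)-Q_t(e,b,1)$, I would show that for each fixed $e$ it crosses zero at most once from below as $b$ increases, so that $\{b:A_t(e,b)\ge 0\}$ is an up-set $[\,b\ust(e),1\,]$. The instantaneous part contributes $be^2-\lambda$, increasing in $b$; the continuation part needs $V_t(\cdot,b)$ nonincreasing in $b$ (a better channel belief is never worse) and the map $\cT(b)=p_{01}+(p_{11}-p_{01})b$ increasing, which is exactly where Assumption~\ref{assum:channel} enters, while Assumption~\ref{assum:stabl} guarantees $W^{(i)}_t$ and $V_t$ converge to finite stationary limits so the structure survives in the infinite-horizon limit. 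The main obstacle I anticipate is precisely the monotone single-crossing of $A_t(e,\cdot)$ against the competing continuation terms: the success branch pulls toward the fixed value at $(0,p_{11})$ while the failure branch lands at $p_{01}$, and controlling their difference requires propagating a joint monotonicity/convexity and symmetric-unimodal (majorization) comparison of $V_t$ in both arguments through the Bellman update. Carrying that bundle through the induction, together with the $\mathcal{S}(0)$-preservation used in Part~1, is where essentially all of the technical effort lies.
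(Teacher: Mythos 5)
There is a genuine gap, and it sits exactly where the paper expends most of its effort. Your argument for Part~1 needs the post-scheduling belief to stay in $\cS(0)$, and for that you need the transmit region $\{e:\Gamma(e,b)=1\}$ to be the complement of a symmetric interval for each fixed $b$ (so that the reweighting factor $p(b,\Gamma(e,b))$ in \eqref{eq:theta-evolve} is even and non-increasing in $|e|$, cf.\ Lemma~\ref{lemma:theta-SU}). You justify this by saying transmission occurs on the large-$|e|$ tail because ``the threshold $b\ust(e)$ is nonincreasing in $|e|$, a by-product of the advantage analysis below.'' But that advantage analysis (i) is carried out in the reduced single-decision-maker MDP, which only exists \emph{after} $\hat e\ust=0$ has been established, and (ii) even then only yields a single crossing in $b$ for each fixed $e$ — it says nothing about monotonicity of $b\ust(\cdot)$ in $|e|$, which is a cross-variable property the paper explicitly leaves unproven (it is only observed numerically and flagged as future work). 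Likewise, your decoupling claim $W^{(1)}_t(b^{(pr.)})=\bE_{b^{(pr.)}}[V_t(e,b)]$ is not available a priori: the estimator's action $\hat e$ is chosen as a single number for the whole post-scheduling belief, so the inner infimum in \eqref{eq:W2} is a concave (not linear) functional of the belief, and an asymmetric prescription paired with a nonzero $\hat e$ could in principle beat any state-feedback policy. The induction therefore does not close as written.

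What fills this hole in the paper is the majorization machinery of Proposition~\ref{prop:W_R} and Corollary~\ref{lemma:opt_pres}: by a backward induction showing $W^{(1)}_t,W^{(2)}_t$ satisfy property $\cR$, one shows that for \emph{any} measurable prescription $\Gamma$ acting on an $\cS(0)$ pre-scheduling belief there is a symmetric-interval prescription $\Tg$ (constructed by matching the no-transmit mass, \eqref{eq:define_r}--\eqref{eq:gamma_tilde}) that does at least as well — without presupposing $\hat e=0$ at the current stage or any monotonicity of the threshold curve. That is the step that legitimately produces an even-and-increasing optimal prescription, after which $\cS(0)$-preservation, $\hat e\ust=0$, the reduction to the MDP with cost \eqref{def:d_tilde}, and finally the threshold-in-$b$ analysis (done in the paper via the folded POMDP and properties A)--D) imported from the conference paper) all follow in that order. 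You name majorization only at the very end and attach it to the single-crossing argument in $b$; it is actually needed earlier and for a different purpose. I would recommend restructuring your proof around that symmetrization lemma.
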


\begin{proof}
\begin{enumerate}
    \item We have $e(0) = 0$ for all $b \in [0,1]$. Then, $b^{(pr.)}_0(\cdot,b) \in \cS(0)$ for all $b \in [0,1]$ since $b^{(pr.)}_0(e,b) = \delta_{(0,b(0))}(e,b), b(0) \in [0,1]$. Now, it follows from Corollary~\ref{lemma:opt_pres} that there exists an optimal prescription at $t=0$ that is even and increasing. As a result, from Lemma~\ref{lemma:theta-SU} we have that $b^{(po.)}_0(\cdot,b) \in \cS(0)$ for all $b \in [0,1]$. Finally, from Lemma~\ref{lemma:min_0} we have that $\hat{e}(0) =0$ is an optimal estimator. Next, upon applying Lemma~\ref{lemma:pi-SU} we infer that $b^{(pr.)}_1(\cdot,b) \in \cS(0)$, $b \in [0,1]$. Thus, from Corollary~\ref{lemma:opt_pres} we can conclude that there exists an optimal prescription at $t=1$ that is even and increasing. Hence, $b^{(po.)}_1(\cdot,b) \in \cS(0)$, $b \in [0,1]$ (Lemma~\ref{lemma:theta-SU}). Consequently, $\hat{e}(1) =0$ is an optimal estimator (Lemma~\ref{lemma:min_0}). Proceeding in this recursive manner concludes the proof of 1).
    \item First, we fix the optimal estimator as in~\eqref{eq:opt_est}. The optimization problem~\eqref{def:new_obj} now reduces to one which involves a single decision maker, i.e., the sensor. For this problem, we will show that $(e(t),b(t))$ is a controlled MDP with control $u(t)$. Alternatively, we have to prove that $(e(t),b(t))$ is an information state. This is shown in Steps (i) and (ii) below.
\nal{
&\text{(i)}~\bP(e(t+1),b(t+1) \mid e(t),b(t), \{y(s)\}_{s=0}^{t-1},u(t)) \\
& =\sum_{y \in \bR \cup \Xi} \Bigl(\bP(e(t+1) \mid y(t)=y, e(t)) \\
& \times \bP(b(t+1) \mid b(t), y(t)=y,u(t)) \\
& \times \bP(y(t)=y \mid e(t),b(t),u(t))\Bigr) \\
& = \bP(e(t+1),b(t+1) \mid e(t),b(t),u(t)),
}
where the first equality follows from the law of total probability~\cite{grimmett2020probability} and the last equality follows from~\eqref{def:error_evolve},~\eqref{def:beliefevolv}, and~\eqref{eq:prob_y}.

(ii) Using the law of iterated expectation, we can write the instantaneous cost as
\al{
& \bE[(e^+(t))^2 + \lambda u(t) \mid \{e(s),b(s),u(s)\}_{s=0}^{t}, \{y(s)\}_{s=0}^{t-1}] \notag \\
& =\lambda u(t) + e(t)^2 \bP(y(t)=\Xi \mid e(t),b(t),u(t)) \notag \\
& = \lambda u(t) + (1-b(t)) e(t)^2 \mathds{1}({u(t) = 1}) \\
& + e(t)^2 \mathds{1}({u(t) = 0}), \label{eq:cost}
}
where the first equality follows from~\eqref{eq:e^+_t}, and the last equality follows from~\eqref{eq:prob_y}. Thus, the instantaneous cost is a function of the current state and action. Thus, it follows from~\eqref{eq:cost} that once we fix the estimator as in~\eqref{eq:opt_est} we can equivalently write the instantaneous cost which we denote by $\Tilde{d}$ of Problem 1 (see \eqref{def:obj}) as follows:
\al{
	\Tilde{d}(e,b,u) := 
 \begin{cases}
	    e^2 & \text{if } u=0, \\
        (1-b)e^2 + \lambda & \text{if } u=1.
	\end{cases}
 \label{def:d_tilde}
}

\item The proof is given in Appendix~\ref{thm_proof}. 
\end{enumerate}
\end{proof}
\section{Optimal policies for the Average cost problem} \label{sec:avg_cost}
In this section we show the existence of optimal policies for the following \emph{average cost} problem~\eqref{def:avg_obj} as described below using the vanishing discount approach~\cite{schal1993average},
\al{
	\min_{(f,g)} \limsup_{T \rightarrow \infty} \frac{1}{T}\bE_{(f,g)} \left(\sum_{t=0}^{T} \left((x(t)-\hat{x}(t))^2+\lambda u(t) \right)\right).\label{def:avg_obj}
} 
This involves taking the limit of the optimal policies obtained for the $\beta$-discounted cost problem as the discount factor $\beta \uparrow 1$. The next proposition shows this. For this, we first define $V\ub$ to denote the value function for Problem 1~\eqref{def:obj} with estimator fixed as in~\eqref{eq:opt_est}, i.e., $(e,b) \in \bR \times [0,1]$.
\al{
    V\ub(e,b) = \min_{f}\bE_{f} \left(\sum_{t=0}^{\infty} \beta^{t}\Tilde{d}(e(t),b(t),u(t))\right)\label{def:pomdpobj}.
}
where $\Tilde{d}$ is as in~\eqref{def:d_tilde}. 

The following proof follows from~\cite[Theorem 3.8]{schal1993average} under the General Assumption~\cite[p. 164]{schal1993average}, Assumptions (S) and (B) of~\cite{schal1993average}.

\begin{proposition} \label{prop:vanish_discount}
    Consider Problem 1 and~\eqref{def:avg_obj}, and let Assumptions~\ref{assump:noise}--\ref{assum:channel} hold. Let $\{\beta_n\}_{n \in \bN}$, $\beta_n \in (0,1)$ be a sequence of discount factors such that $\beta_n \uparrow 1$, so that $\{\Tilde{f}^{(\beta_n)}\}_{n \in \bN}$ is the corresponding sequence of optimal policies. Then for any $(e,b) \in \bR \times [0,1]$, there exists a subsequence $\beta_{n_k} \uparrow 1$, $k \in \bN$ of $\{\beta_n\}$ such that $\Tilde{f}\ust(e,b) := \lim_{k \rightarrow \infty} \Tilde{f}^{(\beta_{n_k})}(e,b)$, i.e., $\Tilde{f}\ust$ is the limit point of $\{\Tilde{f}^{(\beta_{n_k})}\}_{k \in \bN}$. Then, $\Tilde{f}\ust$ is an optimal policy for the average cost problem~\eqref{def:avg_obj}. Furthermore, if $\rho\ust$ is the optimal average cost of~\eqref{def:avg_obj} and $V\ub$ is as in~\eqref{def:pomdpobj}, then $\rho\ust = \lim_{\beta \uparrow 1} (1-\beta) V\ub$.
\end{proposition}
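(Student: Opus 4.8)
The plan is to recognize the proposition as a direct instance of the vanishing-discount framework of~\cite{schal1993average} and to spend the proof on verifying its hypotheses for the specific MDP at hand. By Theorem~\ref{thm:opt_est}(1)--(2), once the estimator is fixed to the optimal rule~\eqref{eq:opt_est}, the problem collapses to a single-agent MDP with state $(e,b)\in\bR\times\cB$, action $u\in\{0,1\}$, nonnegative one-stage cost $\Tilde{d}$ of~\eqref{def:d_tilde}, and transition law given by the error recursion~\eqref{def:error_evolve} together with the belief recursion~\eqref{def:beliefevolv}; the associated $\beta$-discounted value function is exactly $V\ub$ of~\eqref{def:pomdpobj}. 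The theorem~\cite[Theorem~3.8]{schal1993average} then yields verbatim the two claims we want, namely the existence of a stationary limit point $\Tilde{f}\ust$ that is average-cost optimal and the identity $\rho\ust=\lim_{\beta\uparrow 1}(1-\beta)V\ub$, provided one checks the General Assumption together with conditions (S) and (B). Thus the whole proof reduces to verifying these three hypotheses for our model.

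Verifying the General Assumption is routine. The action set $\{0,1\}$ is finite and hence compact; the cost $\Tilde{d}(e,b,u)$ is nonnegative, bounded below, and continuous (polynomial in $e$, affine in $b$); and the transition kernel is weakly continuous. The last point holds because on a no-transmission step the maps $e\mapsto ae+w$ and $b\mapsto\cT(b)$ are continuous, on a transmission step the observation probabilities in~\eqref{eq:prob_y} depend continuously on $b$ while the post-transmission error resets deterministically, and in all cases the noise enters through convolution with the pdf $\varphi$ of Assumption~\ref{assump:noise}, which preserves weak continuity. Lower semicontinuity of the cost and the existence of measurable selectors then follow automatically.

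The substance of the argument lies in conditions (S) and (B), and this is where the stability Assumption~\ref{assum:stabl} is essential. For (S) I would exhibit a stationary policy, e.g. the always-transmit policy, under which the one-step drift satisfies $\bE[e(t+1)^2\mid e(t)]\le a^2(1-p_{01})\,e(t)^2+\sigma_w^2$, using that a transmission resets the error to $0$ on success and that $1-b(t)\le 1-p_{01}$ for $t\ge 1$; since $a^2(1-p_{01})<1$, this forces $\sup_t\bE[e(t)^2]<\infty$, and as $\Tilde{d}$ is dominated by $e^2+\lambda$ we obtain a uniform bound $\sup_\beta(1-\beta)V\ub<\infty$, the reverse inequality $V\ub\ge 0$ being immediate. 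For (B) I would fix a reference state, say $(0,p_{11})$, and bound the relative value $h_\beta(e,b):=V\ub(e,b)-V\ub(0,p_{11})$ uniformly in $\beta$ from both sides by a coupling/return-time argument: from any $(e,b)$ one can drive the error back into a bounded region at a discounted extra cost that is finite and independent of $\beta$, once more because the same geometric drift controls the transient excursions of $e$. The main obstacle is precisely this uniform equiboundedness of $\{h_\beta\}$ over the \emph{non-compact} error space $\bR$ while $|a|$ may exceed $1$; the entire burden of the estimate rests on Assumption~\ref{assum:stabl} furnishing the Lyapunov function $e^2$ with geometric drift, which simultaneously tames the per-stage cost for (S) and the relative-value excursions for (B). With (S) and (B) established, the subsequence extraction, the passage to the limit through lower semicontinuity, and the identification of $\rho\ust$ are exactly the content of~\cite[Theorem~3.8]{schal1993average} and need no further work.
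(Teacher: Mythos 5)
Your proposal matches the paper's argument: the paper likewise proves this proposition by invoking \cite[Theorem 3.8]{schal1993average} under the General Assumption together with conditions (S) and (B), and the verification you sketch (the always-transmit policy giving the drift bound $a^2(1-p_{01})e^2+\sigma_w^2$ for (S), and a uniform bound on $V\ub(e,b)-V\ub(e_0,b_0)$ for (B), both resting on Assumption~\ref{assum:stabl}) is exactly the route the authors take. The proposal is correct and essentially the same as the paper's proof.
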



We then have the following result for the average cost problem~\eqref{def:avg_obj}.
\begin{theorem} \label{thm:joint_result}
    Let Assumptions~\ref{assump:noise}--\ref{assum:channel} hold. Then for~\eqref{def:avg_obj}:
    \begin{itemize}
        \item[1)]  There exists an optimal estimation strategy $\Tilde{g}\ust$ that satisfies~\eqref{eq:opt_est}.

        \item[2)] There exists an optimal transmission policy $\Tilde{f}\ust$ that has a threshold structure, i.e., for every $e \in \bR$, there exists a threshold $b\ust(e)$ such that the sensor transmits only when $b \geq b\ust(e)$. Moreover, an transmission policy is even in $e$.
    \end{itemize}
\end{theorem}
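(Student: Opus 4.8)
The plan is to derive the average-cost optimal policies as vanishing-discount limits of the discounted-cost optimal policies supplied by Theorem~\ref{thm:opt_est}, and then to show that each structural feature proved there---the Kalman-like estimator~\eqref{eq:opt_est}, the threshold structure in $b$, and evenness in $e$---survives the limit $\beta\uparrow 1$. Proposition~\ref{prop:vanish_discount} is the engine: it produces a sequence $\beta_n\uparrow 1$ and a subsequence $\beta_{n_k}\uparrow 1$ along which $\tilde f^{\star}(e,b):=\lim_{k\to\infty}\tilde f^{(\beta_{n_k})}(e,b)$ exists for every $(e,b)\in\bR\times[0,1]$ and is optimal for~\eqref{def:avg_obj}. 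It then remains only to read off the structure of this limit, so the proof is mostly a matter of checking which properties are closed under pointwise convergence.

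I would prove part 2) first, since the estimator claim leans on it. By Theorem~\ref{thm:opt_est}(3), each $\tilde f^{(\beta_{n_k})}$ is even in $e$ and, for every fixed $e$, is a threshold rule in $b$; that is, $\tilde f^{(\beta_{n_k})}(e,\cdot)$ is $\{0,1\}$-valued and non-decreasing in $b$ on $\cB$. Evenness passes to the limit at once, since $\tilde f^{\star}(e,b)=\lim_k\tilde f^{(\beta_{n_k})}(e,b)=\lim_k\tilde f^{(\beta_{n_k})}(-e,b)=\tilde f^{\star}(-e,b)$. For the threshold property, fix $e$: a convergent sequence valued in the closed set $\{0,1\}$ has limit in $\{0,1\}$, and a pointwise limit of non-decreasing functions is non-decreasing, so $b\mapsto\tilde f^{\star}(e,b)$ is a $\{0,1\}$-valued non-decreasing function, i.e.\ a threshold rule with $b^{\star}(e):=\inf\{b\in\cB:\tilde f^{\star}(e,b)=1\}$.

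For part 1), I would argue that the estimator~\eqref{eq:opt_est}, which does not depend on $\beta$, stays optimal in the limit. The key structural observation is that the estimate $\hat e(t)$ enters only the instantaneous cost and never feeds back into the dynamics of $(e(t),b(t))$ nor into the belief recursion of Lemma~\ref{lemma:prepost_evolve}; hence an estimator minimizing the per-stage expected squared error at every $t$ is optimal for any monotone aggregation of the stage costs, the time-average included (if each expected stage cost is minimized, then so is $\tfrac1T\bE\sum_{t=0}^{T}(\cdot)$ for every $T$, and therefore the $\limsup$). Running the symmetric-unimodality recursion from the proof of Theorem~\ref{thm:opt_est}(1) along the trajectory induced by $\tilde f^{\star}$, the evenness established in part 2) keeps the pre- and post-scheduling beliefs in $\cS(0)$ via Lemmas~\ref{lemma:pi-SU} and~\ref{lemma:theta-SU}, whereupon Lemma~\ref{lemma:min_0} gives $\hat e(t)=0$, equivalently~\eqref{eq:opt_est}, as the per-stage minimizer.

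The main obstacle is the mild circularity linking the two parts and the need to transport the symmetric-unimodality recursion from the discounted optimality equations of Theorem~\ref{thm:prepost-val-func} to the average-cost regime: optimality of $\hat e\equiv0$ is what decouples estimation from scheduling, yet the recursion delivering it presupposes an even scheduling prescription, which is only available in the limit through part 2). I would break this loop by fixing the limit policy $\tilde f^{\star}$ (even, by part 2)) as the prescription, verifying that~\eqref{eq:opt_est} is its best response through the per-stage argument above, while separately invoking Proposition~\ref{prop:vanish_discount} for the optimality of $\tilde f^{\star}$ in the reduced single-agent problem~\eqref{def:pomdpobj}; consistency of the two facts yields joint optimality. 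A minor point still to check is that the inherited threshold $b^{\star}(e)$ indeed lies in $\cB$ and that ties at $b=b^{\star}(e)$ may be broken without loss, neither of which affects the threshold form.
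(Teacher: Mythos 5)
Your proposal is correct and follows essentially the same route as the paper: part 2) is obtained by taking the vanishing-discount limit of the threshold-type, even policies from Theorem~\ref{thm:opt_est}-3) via Proposition~\ref{prop:vanish_discount} and noting that the threshold/evenness structure is preserved under pointwise limits, and part 1) rests on the observation (via~\eqref{eq:hat_e_indep}) that the estimate only affects the immediate stage cost and not the dynamics, so the Kalman-like estimator remains optimal for the average cost. Your write-up simply fills in details (monotone $\{0,1\}$-valued limits, the circularity between the two parts) that the paper's proof leaves implicit.
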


\begin{proof}
    1) The estimator~\eqref{eq:opt_est} for the $\beta$-discounted cost continues to remain optimal for the average cost. This is because the optimal estimator $\hat{e}$ minimizes only the immediate estimation error and does not affect the future cost term as can been seen from~\eqref{eq:hat_e_indep}.
    
    2) We first consider a sequence of discount factors $(\beta_n)_{n \in \bN}$ such that $\beta_n \uparrow 1$. We then have from Theorem~\ref{thm:opt_est}-3) that for each $\beta_n$, there exists an optimal policy that is of the threshold-type and even in $e$. Since the limit of a sequence of threshold-type policies is also of the threshold-type, the proof follows from Proposition~\ref{prop:vanish_discount}.
\end{proof} 
\section{Numerical results} \label{sec:simulation}
In this section, we fix the optimal estimator $\Tilde{g}$ as in~\eqref{eq:opt_est} and focus on computing an optimal transmission strategy $\Tilde{f}$ for~\eqref{def:avg_obj}. We first use relative value iteration (RVI) to compute an optimal policy for~\eqref{def:avg_obj} in Section~\ref{subsec:RVI_simulation}. Moreover, we use VI to compute an optimal policy for~\eqref{def:obj}. However, RVI/VI requires knowledge of the Markovian channel parameters $p_{01}, p_{11}$, which might not be available in practice. To address this, we propose a (fully data-driven) AC algorithm~\cite{konda2003onactor} that utilizes the structural result of an optimal transmission policy as shown in Theorem~\ref{thm:joint_result}-2) and learns an efficient policy in Section~\ref{subsec:AC}. As discussed below, the structural result of Theorem~\ref{thm:joint_result} reduces the policy search space, and also facilitates an efficient implementation of the AC algorithm. For implementation of RVI/VI and AC we truncate the state space to $\cX:= [-L, L] \times [0,1]$. Throughout, we set the transmission price and other parameters as follows: $\lambda = 0.65$, $a = 0.7$, $p_{01} = 0.4$, $p_{11} = 0.7$, and $L = 2$, respectively. 

\subsection{Relative Value Iteration} \label{subsec:RVI_simulation}

As is shown in~\cite[Theorem 5.5.4]{Hernandez2012discrete}, under Assumptions 4.2.1 and 5.5.1 of~\cite{Hernandez2012discrete}, there exist a constant $\rho\ust$ which is the optimal average cost of~\eqref{def:avg_obj}, and a continuous function $h: \bR \times [0,1] \rightarrow \bR$ that satisfy the average cost optimality equation, i.e., for $(e,b) \in \bR \times [0,1]$
\al{
\rho\ust + h(e,b) = \min_{u \in \{0,1\}} & \Big[\Td(e,b,u) + \sum_{b' \in \cB} \int_{e' \in \bR} h(e',b') \notag \\
& \times \Tilde{p}(e',b' | e,b,u) \,de'\Big], \label{eq:ACOE}
}
where $\Tilde{d}$ is as in~\eqref{def:d_tilde}. Also, there exists a deterministic stationary optimal policy $\Tilde{f}$ that satisfies the right hand side of~\eqref{eq:ACOE}. We will now use RVI algorithm~\ref{alg:rvi}~\cite[p. 101]{Hernandez2012discrete} to solve~\eqref{eq:ACOE}. For implementing RVI, we first discretize the truncated space in which error $e$ resides, $[-L,L]$ with $0.2$ quantization width. The belief $b(t)$ assumes values in the set $\{\cT^k(b(0))\}_{k=1}^{K} \cup \{\cT^k(p_{01})\}_{k=1}^{K} \cup \{\cT^k(p_{11})\}_{k=1}^{K}$, where we let $K = 10$ and $b(0) \in [0,1]$. The optimal $n$-stage cost~\cite{Hernandez2012discrete} is recursively updated as shown in~\eqref{eq:g_n} with $\Tilde{f}_n$ being the minimizer of the right hand side of~\eqref{eq:g_n}. Next, we consider two functions $h_n(\cdot, \cdot)$ and $\rho_n(\cdot,\cdot)$ on $\cX$ with updates as in~\eqref{eq:h_n},~\eqref{eq:rho_n}, respectively. Then, it follows from~\cite[Theorem 5.6.3]{Hernandez2012discrete} that under assumptions 4.2.1, 5.5.1, and 5.6.1 of~\cite{Hernandez2012discrete} for all $(e,b) \in \cX$, $\rho_n(e,b) \rightarrow \rho*$ and $h_n(e,b) \rightarrow h(e,b)$. In addition, $\Tilde{f}_n \rightarrow \Tilde{f}$ as shown in~\cite[Theorem 5.6.7]{Hernandez2012discrete}.
\begin{algorithm}
    \caption{Relative Value Iteration (RVI)}\label{alg:rvi}
    \begin{algorithmic}[0]
        \State Initialize: $g_0(e,b)\gets 0$, $h_0(e,b) \gets 0$ for all $(e,b) \in \mathcal{X}$ 
        \State Input: reference state $(e_0,b_0)$ 
        \State Parameters: $\epsilon$
        \For {$ n = 0, 1,2,\dots$}
            \ForAll{$(e,b) \in \mathcal{X}$}
                \vspace{-0.5cm}
                \State \al{& g_{n+1}(e,b) \gets \min_{u \in \{0,1\}} \Big\{\Tilde{d}(e,b,u)  \notag \\
                & + \int_{(e',b') \in \cX} \Tilde{p}(e', b' \mid e,b,u)   g_{n}(e',b') \,de' \Big\} \label{eq:g_n} \\
                & \Tilde{f}_{n+1}(e,b) \in \argmin_{u \in \{0,1\}} \Big\{\Tilde{d}(e,b,u)  \notag \\ 
                & + \int_{(e',b') \in \cX} \Tilde{p}(e', b' \mid e,b,u) g_{n}(e',b') \,de'\Big\} \notag \\
                & h_{n+1}(e,b) \gets g_{n+1}(e,b) - g_{n+1}(e_0,b_0) \label{eq:h_n} \\
                & \rho_{n+1}(e,b) \gets g_{n+1}(e,b) - g_{n}(e,b) \label{eq:rho_n}
                }
            \EndFor
            \If{$\|h_{n+1}-h_{n}\|_{\infty}  < \epsilon$}
                \State \textbf{break}
            \EndIf
        \EndFor
        \State \textbf{return:} $|g_{n+1}(e,b)-g_{n}(e,b)|$ for any $(e,b)$, $\Tilde{f_n}$
    \end{algorithmic}
\end{algorithm}
We compare the performance of a policy that is optimal for the average cost problem~\eqref{def:avg_obj}, and is obtained using RVI, with the following sub-optimal policies:

1) Sub-optimal policy 1: The sensor always attempts a packet transmission irrespective of the current state $(e,b) \in \cX$.

2) Sub-optimal policy 2: The sensor attempts a packet transmission at regular intervals, which we refer to as the \emph{Period}. 

3) Sub-optimal policy 3: The sensor attempts transmission with a probability $p^{(pol.)}$ at each time. Furthermore, these attempts are independent across times.

4) Sub-optimal policy 4: This policy is derived by assuming that the wireless channel state process is i.i.d. over times. The probability that the channel is in good state, is equal to the stationary probability that the Markovian channel is in good state. We then solve the RVI under this assumption to obtain an optimal policy. 

Fig.~\ref{fig:periodic_iid} compares the performance of the optimal policy with the sub-optimal policies 2 and 3 as the period and $p^{(pol.)}$ are varied. It can been observed from Fig.~\ref{fig:periodic_iid} that as the period decreases or $p^{(pol.)}$ increases, the average reward increases. This implies that more frequent transmission attempts result in an increase in average reward. And hence, we take sub-optimal policy 1 to compare the average reward with the optimal policy obtained using RVI. We also compare the performance of the above sub-optimal policies with optimal policy as the system parameter $a$, and Markovian parameters $p_{11}, p_{01}$ are varied, as shown in Fig.~\ref{fig:Avg_reward_subopts}. We fix the transmission attempt probability $p^{(pol.)}$ of the sub-optimal policy 3 to be equal to the average transmission rate of the optimal policy obtained by RVI, and the period of sub-optimal policy 2 is set equal to $1/p^{(pol.)}$. It can be seen from Fig.~\ref{fig:Avg_reward_subopts}(a) that as the parameter $a$ increases, the average reward decreases. This is because with an increase in $a$, the error increases~\eqref{def:error_evolve}. The difference $(p_{11}-p_{01})$ in Fig.~\ref{fig:Avg_reward_subopts}(b) indicates the Markovian nature of the channel. Thus, as $(p_{11}-p_{01})$ increases, it implies that the Markovian nature of the channel becomes more pronounced. Moreover, keeping $p_{11}$ and decreasing $p_{01}$ indicates that the channel is bad, and hence the average reward decreases.

\begin{figure}[htbp]
	\begin{centering}
		\includegraphics[scale=0.33]{./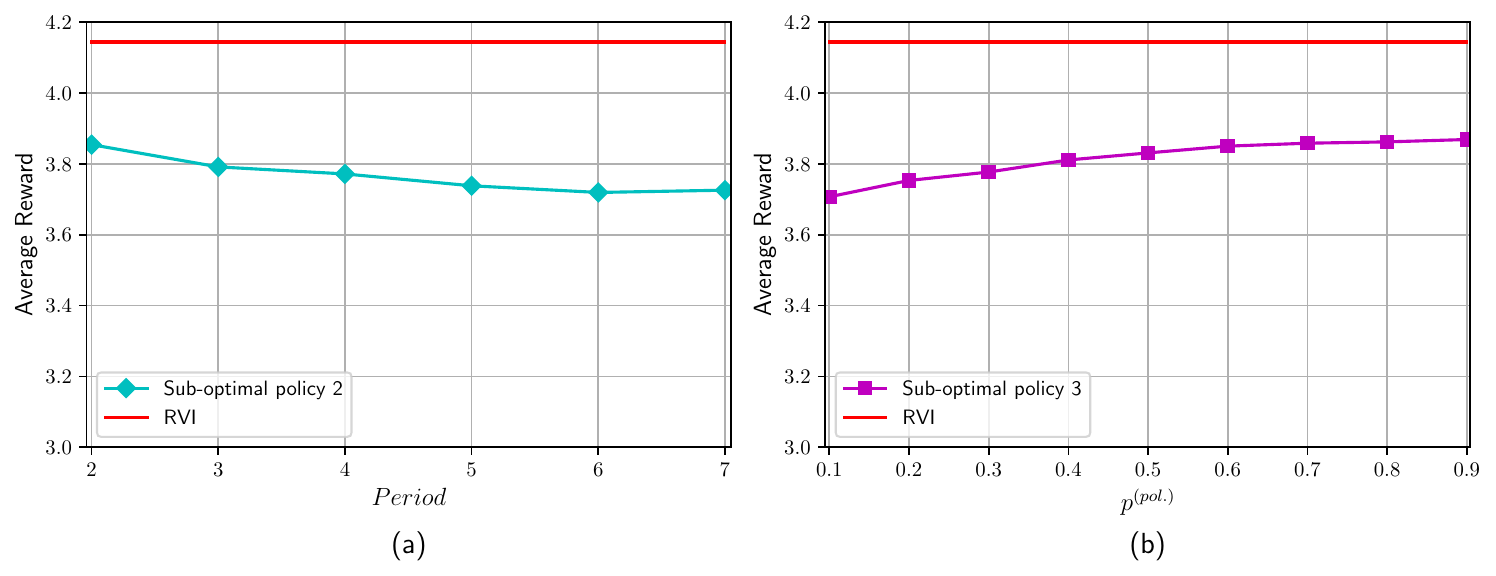} 
		\par\end{centering}
	\caption{Comparison of optimal policy obtained by RVI with $a=0.7, p_{01} = 0.4, p_{11} = 0.7$, (a) sub-optimal policy 2 as the period of transmission is varied; (b) sub-optimal policy 3 as $p^{(pol.)}$ is varied.}
	\label{fig:periodic_iid}
\end{figure}

\begin{figure}[htbp]
	\begin{centering}
		\includegraphics[scale=0.33]{./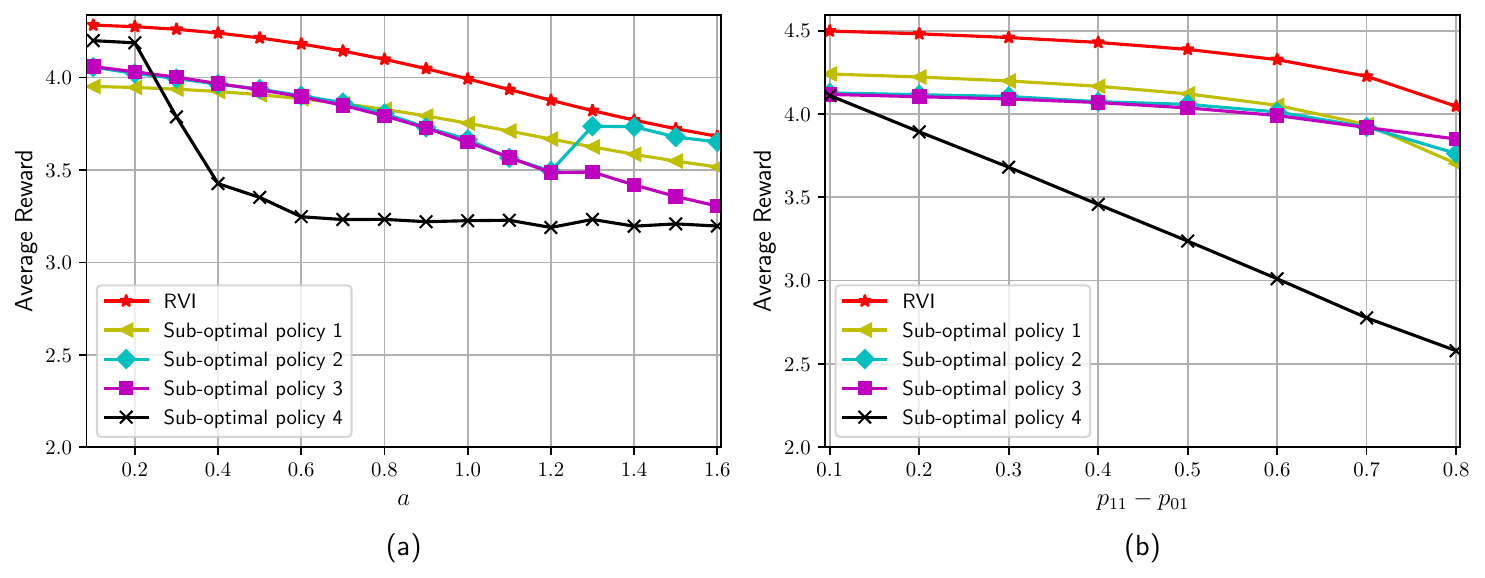} 
		\par\end{centering}
	\caption{Comparison of optimal policy obtained by RVI with the sub-optimal policy as (a) $a$ is varied with $p_{01} = 0.4, p_{11} = 0.7$; (b) $p_{11}-p_{01}$ with $p_{11} = 0.9$.}
	\label{fig:Avg_reward_subopts}
\end{figure}

\subsection{Actor-Critic Algorithm} \label{subsec:AC}
We propose a learning algorithm that can be used to tune the policy when the parameters $a, p_{01}, p_{11}$ are unknown. We leverage the structural result of Theorem~\ref{thm:joint_result} in order to parameterize the transmission policy, with the policy parameters subsequently tuned using the AC algorithm~\cite{konda2003onactor}. The threshold curve $b\ust(\cdot)$ is approximated by a linear curve, i.e., we set $b\ust(e) = \theta_1 + \theta_2 |e|$, where $\theta_1$ and $\theta_2$ denote the intercept and slope of the linear curve respectively. For $\theta = (\theta_1, \theta_2, \theta_3)\in\bR^3$, consider the policy
\nal{
\pi_{\theta}(1|e,b) = \frac{1}{1 + \exp(-\theta_2(b - (\theta_1 + \theta_2 |e|)))}, (e,b) \in \cX.
}
$\pi_{\theta}(1| e,b)$ denotes the probability with which the policy with parameter $\theta$ attempts a packet transmission when the current state is $(e,b)$. Then, the probability of no packet transmission $\pi_{\theta}(0|e,b) = 1 - \pi_{\theta}(1|e,b)$. Let $\rho(\pi_{\theta})$ denote the average cost of $\pi_{\theta}$, i.e.,
\nal{
\rho(\pi_{\theta}) = \limsup_{T \rightarrow \infty} \frac{1}{T} \bE_{(\pi_{\theta}, \Tilde{g})} \lf(\sum_{t=0}^{T-1} \Td(e(t),b(t), u(t))\rt),
}
where $\Td$ is as in \eqref{def:d_tilde}. Define now the advantage function~\cite{konda2003onactor} $\zeta_{\theta} : \cX \times \{0,1\} \rightarrow \bR$ according to 
\nal{
& \zeta_{\theta}(e,b, u) = \Td(e,b,u) - \rho(\pi_{\theta}) \\
& + \bE_{\theta}\lf[h(e(1),b(1)) \mid e(0) = e, b(0) = b, u(0) = u\rt], &\theta \in \bR^3,
}
where $\bE_{\theta}$ denotes that the expectation is w.r.t. the measure induced by the policy $\pi_{\theta}$, and $h(\cdot, \cdot)$ is as in~\eqref{eq:ACOE}. The critic estimates $\zeta_{\theta}$ which is then used by the actor to update its policy by tuning the associated parameter vector $\theta$. This is done by updating $\theta$ in an approximate gradient direction of the cost. We use linear function approximation~\cite{konda2003onactor} for the critic to estimate $\zeta_{\theta}$. Let $m \in \bN$ denote the number of critic parameters, $\omega=(\omega_1, \omega_2, \ldots, \omega_m)^T$ denote the weight vector used by the critic and $\Tilde{\zeta}_{\theta}$ denote the approximation of $\zeta_{\theta}$. Then,
\nal{
\Tilde{\zeta}_{\theta}^{(\omega)}(e,b,u) = \sum_{i=1}^{m} \omega_i \phi_{\theta}^{(i)}(e,b,u), 
}
where $\phi_{\theta} = \lf(\phi_{\theta}^{(1)}, \phi_{\theta}^{(2)}, \ldots, \phi_{\theta}^{(m)}\rt)^T$ denotes the feature vector used by the critic that is dependent on $\theta$. Following~\cite{konda2003onactor}, we adopt the following choice for $\phi_{\theta}$: 
\nal{\phi_{\theta}(e,b,u) = \nabla_{\theta} \log \pi_{\theta}(u \mid e,b),
}
where $\nabla_{\theta}$ denotes the gradient with respect to $\theta$. As a result of the above choice, we have $m =3$. The pseudo-code for the AC algorithm is given in Algorithm~\ref{algo:AC}. We use $\omega(t)$ (cf.~\eqref{eq:critic_update}) and $\theta(t)$ (cf.~\eqref{eq:actor_update}) to denote the critic weight and actor parameter at time $t$, respectively. $z(t)$ (cf.~\eqref{eq:z}) represents the eligibility trace vector at time $t$ \cite{sutton1998reinforcement}. Then, under Assumptions 3.3, 4.1, 4.2, 4.4, 4.5, 4.8, and 4.9 of~\cite{konda2003onactor}, we have the following convergence result for the AC algorithm which is shown in~\cite[Theorem 6.3]{konda2003onactor}.
\begin{theorem}
    Consider the AC algorithm presented in Algorithm~\ref{algo:AC} with TD(1) critic, i.e., TD($\lambda$) with $\lambda=1$ and under function approximation. Then, $\liminf_t |\nabla_{\theta} \rho(\pi_{\theta(t)})| = 0$ with probability 1, where $\rho(\cdot)$ is the average cost function, and $\nabla_{\theta} \rho(\pi_{\theta(t)})$ represents the gradient of $\rho(\pi_{\theta})$ with respect to $\theta$ and evaluated at $\theta(t)$.
\end{theorem}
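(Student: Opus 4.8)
The plan is to obtain this convergence statement as a direct consequence of~\cite[Theorem 6.3]{konda2003onactor}, so the real work is to verify that the present setup meets the hypotheses (Assumptions 3.3, 4.1, 4.2, 4.4, 4.5, 4.8, and 4.9) invoked there. First I would note that, having fixed the optimal estimator as in~\eqref{eq:opt_est}, the actor-critic loop optimizes \emph{only} the transmission policy, so that the object being controlled is the average-cost MDP with state $(e(t),b(t))$ on the truncated, compact state space $\cX = [-L,L]\times[0,1]$, per-stage cost $\Td$ of~\eqref{def:d_tilde}, and transition kernel induced by $\pi_\theta$. Casting the algorithm in exactly the template of~\cite{konda2003onactor}---randomized stationary policy $\pi_\theta$, average cost $\rho(\pi_\theta)$, differential cost $h$, compatible features $\phi_\theta=\nabla_\theta\log\pi_\theta$, and TD(1) critic---reduces the claim to an assumption-checking exercise.

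Second, I would verify the Markov-chain regularity hypotheses (among Assumptions 4.1--4.2), which I expect to be the main obstacle. Because the sigmoid parameterization gives $\pi_\theta(u\mid e,b)\in(0,1)$ for every state and every $\theta\in\bR^3$, a transmission is attempted with strictly positive probability from any state; by~\eqref{def:beliefevolv} this resets the belief into $\{p_{01},p_{11}\}$, so the induced chain is irreducible and aperiodic on $\cX$. Positive recurrence with a unique invariant distribution then follows from a drift/boundedness argument: the stability Assumption~\ref{assum:stabl} ($a^2(1-p_{01})<1$) controls the growth of the error process between successful receptions via~\eqref{def:error_evolve}, and the truncation to the compact set $\cX$ guarantees bounded cost and a well-defined stationary average cost. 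The delicate point is to establish these ergodicity properties \emph{uniformly} in $\theta$ over the relevant parameter region, since this uniformity is what makes the stationary distribution---and hence $\rho(\pi_\theta)$---continuously differentiable in $\theta$, as the hypotheses require.

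Third, I would dispatch the remaining hypotheses, which are comparatively routine. Smoothness and boundedness of the parameterization (Assumption 3.3) follow from the explicit logistic form of $\pi_\theta$: on the compact $\cX$ and for bounded $\theta$, $\pi_\theta$ is twice continuously differentiable in $\theta$ with $\pi_\theta$ bounded away from $0$ and $1$, so the score $\phi_\theta=\nabla_\theta\log\pi_\theta$ is bounded and Lipschitz in $\theta$; this simultaneously discharges the feature-regularity requirements of Assumptions 4.4--4.5. The choice $\phi_\theta = \nabla_\theta \log \pi_\theta$ of \emph{compatible} features is precisely what ensures that the linear critic's projection recovers the true average-cost gradient direction, so that the actor performs an asymptotically unbiased gradient step on $\rho(\pi_\theta)$. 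The step-size conditions (Assumptions 4.8--4.9)---Robbins--Monro summability together with the two-timescale separation between the faster critic step sizes and the slower actor step sizes---hold by construction of the sequences used in Algorithm~\ref{algo:AC} (cf.~\eqref{eq:critic_update}, \eqref{eq:actor_update}, \eqref{eq:z}).

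Finally, with all assumptions verified, I would invoke~\cite[Theorem 6.3]{konda2003onactor} for the TD(1), i.e., TD($\lambda$) with $\lambda=1$, critic to conclude that the actor iterates satisfy $\liminf_t |\nabla_\theta \rho(\pi_{\theta(t)})| = 0$ with probability $1$, which completes the proof.
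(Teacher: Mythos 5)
Your proposal takes essentially the same route as the paper: the paper offers no proof of its own, stating the result conditionally (``under Assumptions 3.3, 4.1, 4.2, 4.4, 4.5, 4.8, and 4.9 of Konda--Tsitsiklis'') and citing their Theorem~6.3 directly, exactly as you do in your final step. Where you go further is in attempting to actually verify those hypotheses, which the paper simply assumes; that extra content is welcome but two of your checks do not quite hold as stated. First, Algorithm~\ref{algo:AC} as written uses \emph{constant} step sizes $\alpha^{(\theta)},\alpha^{(\omega)}>0$ (and the simulations hold them fixed), so the Robbins--Monro summability and two-timescale separation do not ``hold by construction''; one would have to replace them with appropriately decaying sequences for the citation to apply. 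Second, the uniform-in-$\theta$ ergodicity you correctly identify as the delicate point genuinely fails over all of $\bR^3$, since the logistic policy saturates to a deterministic one as $\|\theta\|\to\infty$; Konda's framework handles this via boundedness of the actor iterates (or an explicit projection), neither of which is established here. Since the paper sidesteps both issues by fiat, your proposal is faithful to its argument, but a reader should understand that the assumption-checking step remains incomplete in both your write-up and the paper itself.
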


\begin{algorithm}
	\caption{Actor-Critic Algorithm} \label{algo:AC}
	\begin{algorithmic}[0]
            \State Input: policy parameterization $\pi_{\theta}(u|e,b)$
            \State Input: advantage function parameterization $\tilde{\zeta}^{(\omega)}_{\theta}(e,b,u)$
            \State Parameters: actor step size $\alpha^{(\theta)} >0$, \\
            \hspace{1.7cm} critic step size $\alpha^{(\omega)} > 0$
            \State Initialize $\theta(0)$, $\omega(0)$, $(e(0),b(0)) \in \cX$,  $\hat{\rho}(0)$
            \State Sample $u(0) \sim \pi_{\theta(0)}(\cdot|e(0),b(0))$ 
            \State Evaluate $z(0) = \phi_{\theta(0)}(e(0),b(0),u(0))$
		\For {$t =1,2,\ldots, T$}
			\State Take action $u(t-1)$
                \State Observe cost $\Tilde{d}(e(t-1), b(t-1), u(t-1))$ (cf.~\eqref{def:d_tilde}), and 
                \State next state $(e(t),b(t))$ as described by~\eqref{eq:e^+_t,e_t} and~\eqref{def:beliefevolv}
                \State Sample $u(t) \sim \pi_{\theta(t-1)}(\cdot|e(t),b(t))$
                \State Update average cost: 
                \al{
                & \hat{\rho}(t) = \hat{\rho}(t-1) + \alpha^{(\omega)}\Bigl[\Tilde{d}(e(t-1), b(t-1), u(t-1)) \notag \\
                & \qquad \qquad - \hat{\rho}(t-1) \Bigr] \notag
                }
                \State Update critic: 
                \al{
                & \omega(t) = \omega(t-1) + \alpha^{(\omega)}\bigl[\Tilde{d}(e(t-1), b(t-1), u(t-1)) \notag \\
                -& \hat{\rho}(t-1) + \tilde{\zeta}_{\theta(t-1)}^{(\omega(t-1))}(e(t),b(t),u(t)) \notag \\ 
                -& \tilde{\zeta}_{\theta(t-1)}^{(\omega(t-1))}(e(t-1),b(t-1),u(t-1))\bigr]z(t-1) \label{eq:critic_update}
                }
                \State Update $z$ for TD(1) critic: Let $(\tilde{e}, \tilde{b}) \in \cX$.
                \State \al{z(t) = 
                \begin{cases} \label{eq:z}
                    z(t-1) \\
                    + \phi_{\theta(t-1)}(e(t),b(t),u(t)) &\mbox{if } (e(t),b(t)) \neq (\tilde{e}, \tilde{b}), \\
                    \phi_{\theta(t-1)}(e(t),b(t),u(t)) &\mbox{otherwise} 
                \end{cases}}
                \State Update actor: 
                \al{
                \theta(t) &= \theta(t-1) - \alpha^{(\theta)} \tilde{\zeta}_{\theta(t-1)}^{(\omega(t-1))}(e(t),b(t),u(t)) z(t) \label{eq:actor_update}
                }
		\EndFor
	\end{algorithmic} 
\end{algorithm}
Next, for the purpose of performing simulations, we begin by re-formulating the original cost minimization problem~\eqref{def:avg_obj} as a reward maximization problem. The instantaneous reward function is set to $(L^2 + 1) - \Tilde{d}(e,b,u)$, see \eqref{def:d_tilde}, which is negative of the instantaneous cost function. This is done in order to align the optimization problem with the standard reinforcement learning framework. Fig.~\eqref{fig:polic_curve}(a) compares the performance of AC with RVI and value iteration (VI) for $\beta$-discounted cost~\eqref{def:obj}  \cite{dutta2023optimal} with $\beta = 0.99$. The optimal policy obtained from RVI/VI is expected to outperform AC as, (a) AC only has access to noisy data and not the system model (unlike RVI/VI) and (b) it uses function approximation unlike RVI/VI to ease computational effort that will bring in inaccuracies. The performance of AC is nonetheless close to that of RVI/VI procedures as can been in Fig.~\eqref{fig:polic_curve}(a). Moreover, the performance of VI is same as that of RVI. This result is consistent with Proposition~\ref{prop:vanish_discount}. Fig.~\eqref{fig:polic_curve}(b) depicts the threshold curve $b\ust(e)$ obtained by AC. It can seen that the threshold curve is non-increasing in $|e|$. This implies that as $|e|$ increases the sensor attempts a packet transmission even for a lower value of $b$. Thus it intuitively means that the immediate reduction of error takes precedence over waiting for better channel quality. We will consider proving this structural result in future work. Fig.~\eqref{fig:polic_curve}(c) shows the policy learned by AC. It shows the probability of attempting a packet transmission by the sensor for each value of $(e,b)$. It is even in $e$, and for each value of $e$, the probability of transmitting a packet increases with $b$. Fig.~\eqref{fig:change_in_params} compares the performance of the AC, RVI, and VI as the system parameter $a$ and the channel parameters $p_{01}, p_{11}$ are varied. It can be seen from Fig.~\eqref{fig:change_in_params}(a) that as $|a|$ increases, the average reward decreases. This is because with an increase in $|a|$, the error $e$~\eqref{def:error_evolve} also increases resulting in increase in instantaneous cost $\Tilde{d}$~\eqref{def:d_tilde} and hence, decrease in average reward. Also, it can be seen from Fig.~\eqref{fig:change_in_params}(b) that as the Markovian channel becomes good, i.e., $p_{11}$ increases, the average reward also increases. The optimal policy is seen to outperform the convergent policy of the AC. This is however expected as RVI/VI provably find the optimal policy. The performance of AC suffers from two constraints: (a) AC only has access to noisy data and not the system model (unlike RVI/VI) and (b) it uses function approximation unlike RVI/VI to ease computational effort that in turn brings inaccuracies. The performance of AC is nonetheless close to that of the RVI/VI procedures.

Finally, it can be observed from Fig.~\eqref{fig:change_in_params}(c) that as $p_{11}-p_{01}$ increases, the average reward decreases.
\begin{figure}[htbp]
	\begin{centering}
		\includegraphics[scale=0.34]{./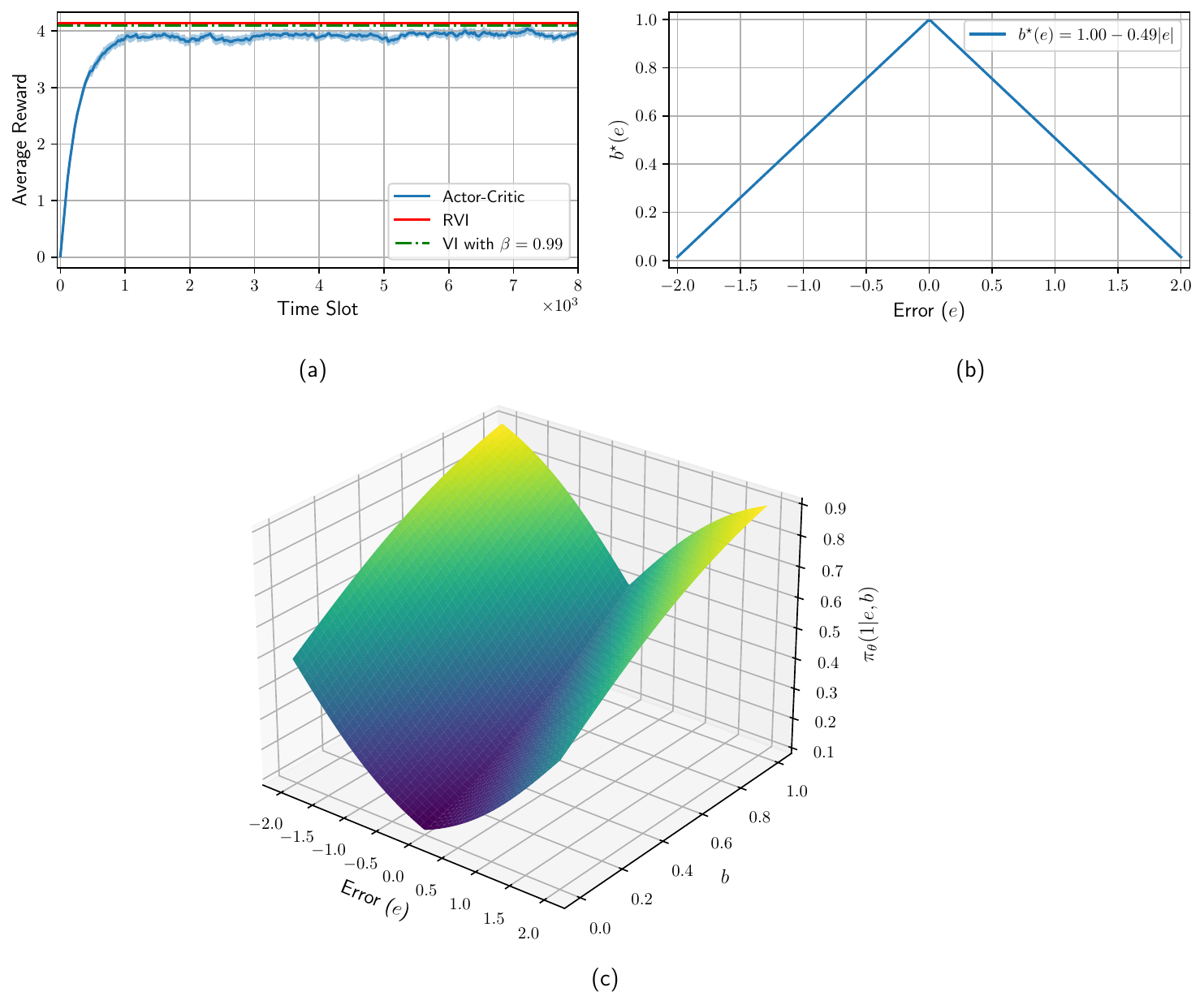} 
		\par\end{centering}
	\caption{(a) Comparison of the average reward achieved by the AC, RVI, and the VI with $T=8000,\beta = 0.99,~\alpha^{(\theta)} = 0.0001$, $\alpha^{(w)} = 0.002,~a = 0.7,~p_{01} = 0.4$, and $p_{11} = 0.7$: (b) Threshold curve, $\tau^{(1)}(q) = 0.99 - 0.99|e|$ obtained using the AC; (c) Policy obtained from the AC.}
	\label{fig:polic_curve}
\end{figure}

\begin{figure}[htbp]
	\begin{centering}
		\includegraphics[scale=0.35]{./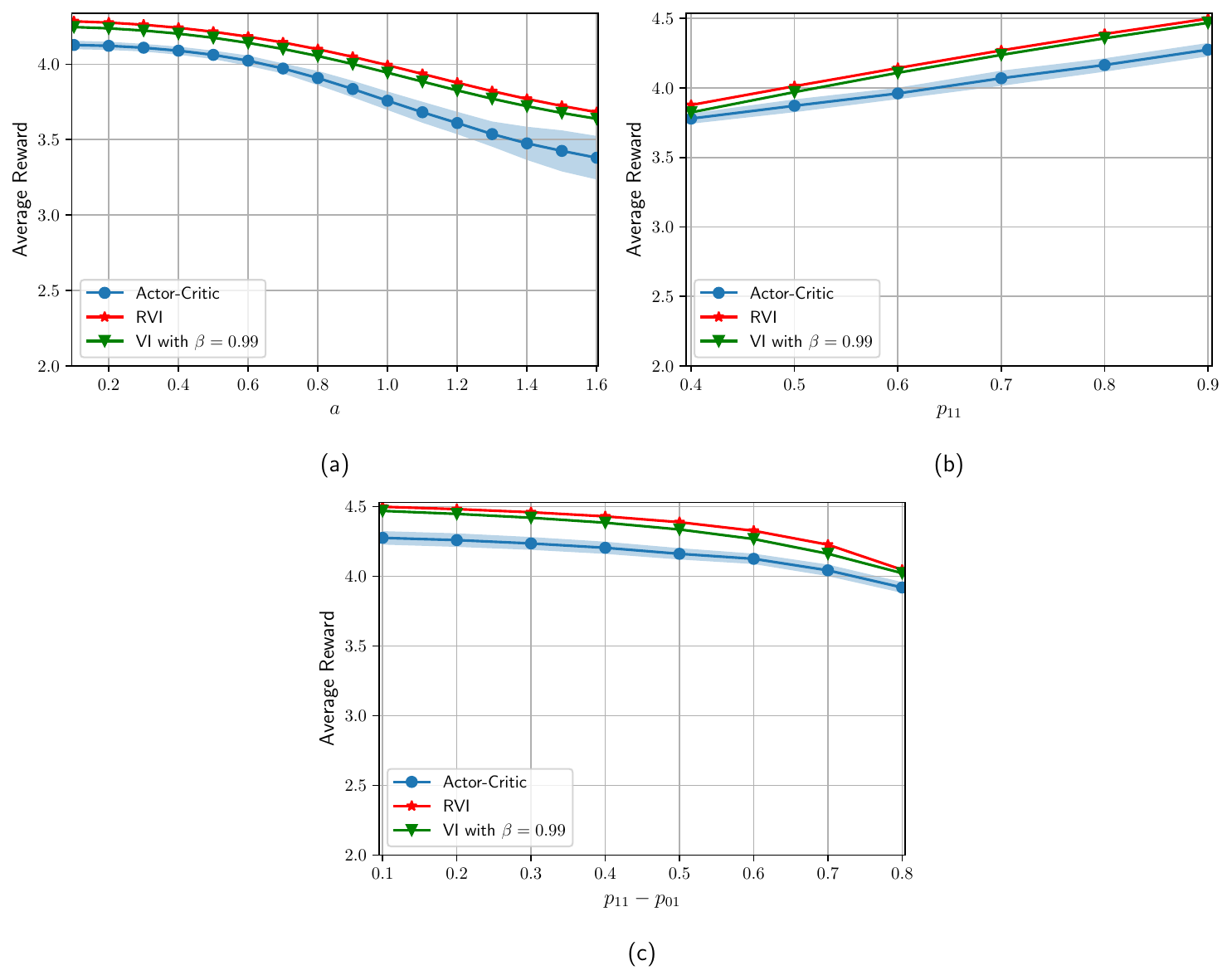} 
		\par\end{centering}
	\caption{Performance comparison of the AC, RVI and VI as system and channel parameters are varied. We use $T=8000$, while step-sizes of the AC are held fixed at $\alpha^{(\theta)} = 0.0001$, and $\alpha^{(w)} = 0.002$: (a) $p_{01} = 0.4, p_{11} = 0.7$; (b) $a = 0.7, p_{01} = p_{11} - 0.1$; (c) $a=0.7$, $p_{11} = 0.9$.}
	\label{fig:change_in_params}
\end{figure}

\section{Conclusion}

We consider a remote estimation setup consisting of a sensor communicating observations to a remote estimator via a GE channel. The goal is to minimize the expected value of an infinite-horizon cumulative discounted cost consisting of squared estimated error and transmission power consumed by the sensor. For this decentralized stochastic control problem we design optimal transmission and estimation strategies. Our analysis relies upon the common information approach which was devised in~\cite{nayyar2013decentralized} to solve this decentralized stochastic control problem. More specifically, we first formulate this problem as a POMDP, and then prove structural results for optimal transmission and estimation strategies. Specifically, we show that there exists a transmission strategy that has a threshold structure with respect to the belief state, and a ``Kalman-like'' estimation strategy, that are jointly optimal. This structural result admits an efficient parameterization of the policies, and helps to restrict the search space of policies to the class of threshold-type policies. This enables us to use the actor-critic (AC) algorithm for optimizing the cost when the system parameters are unknown. The performance of AC is however close to the RVI/VI procedures despite (a) not having access to transition probabilities and (b) using function approximation for the value function that inherently brings in inaccuracies.
\appendices

\section{Preliminary results for Section~\ref{sec:joint_optimality}}
The proof of the following result is given in~\cite[Lemma 11]{Nayyar2013optimal}.
\begin{lemma} \label{lemma:convolve_SU}
    Let $\nu_1 \in \cS(r)$ and $\nu_2 \in \cS(0)$ be two pdfs. Then, $\nu_1 \ast \nu_2 \in \cS(r)$.
\end{lemma}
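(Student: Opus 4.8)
The plan is to prove this by reducing to the centered case and then representing each symmetric unimodal density as a superposition of symmetric ``box'' functions, for which the convolution can be computed explicitly. First I would reduce to $r=0$: writing $\nu_1(x)=\mu(x-r)$ with $\mu\in\cS(0)$, a change of variables gives $(\nu_1\ast\nu_2)(x)=(\mu\ast\nu_2)(x-r)$, so it suffices to show that $\mu\ast\nu_2\in\cS(0)$ whenever $\mu,\nu_2\in\cS(0)$; the claim for general $r$ then follows immediately, since translating a function in $\cS(0)$ by $r$ produces a function in $\cS(r)$. The degenerate cases $\nu_1=\delta_r$ or $\nu_2=\delta_0$ (allowed by the convention in Definition~\ref{def:SU}) are handled separately and trivially, since then the convolution is simply a translate of the other factor.

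For two centered densities $\mu,\nu_2\in\cS(0)$, symmetry of $\mu\ast\nu_2$ about the origin is immediate: substituting $y\mapsto -y$ in the convolution integral and using that both $\mu$ and $\nu_2$ are even shows $(\mu\ast\nu_2)(-x)=(\mu\ast\nu_2)(x)$. The substance of the lemma is unimodality, i.e. that $x\mapsto(\mu\ast\nu_2)(x)$ is nonincreasing in $|x|$. Here I would invoke the layer-cake (distribution-function) representation: since each super-level set $\{\mu>t\}$ is a symmetric interval $(-a(t),a(t))$, one may write $\mu=\int_0^\infty \mathds{1}_{(-a(t),a(t))}\,dt$ and similarly $\nu_2=\int_0^\infty \mathds{1}_{(-b(s),b(s))}\,ds$, with $a(\cdot),b(\cdot)$ nonincreasing hence measurable. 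By Tonelli's theorem (all integrands are nonnegative) the convolution then splits as $\mu\ast\nu_2=\int_0^\infty\!\!\int_0^\infty\bigl(\mathds{1}_{(-a,a)}\ast\mathds{1}_{(-b,b)}\bigr)\,dt\,ds$.

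The key computation is that the convolution of two symmetric boxes is the overlap length $\bigl(\mathds{1}_{(-a,a)}\ast\mathds{1}_{(-b,b)}\bigr)(x)=\bigl|(-a,a)\cap(x-b,x+b)\bigr|$, which is the familiar symmetric trapezoid: constant at $2\min(a,b)$ for $|x|\le|a-b|$ and then decreasing linearly to $0$ at $|x|=a+b$. In particular it lies in $\cS(0)$. It then remains to observe that $\cS(0)$ is closed under nonnegative superpositions: if each $\psi_\alpha\in\cS(0)$, then for $|x_1|\le|x_2|$ we have $\psi_\alpha(x_1)\ge\psi_\alpha(x_2)$ pointwise, and integrating over $\alpha$ preserves both this inequality and evenness. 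Applying this to the double integral above yields $\mu\ast\nu_2\in\cS(0)$, and the reduction step then gives $\nu_1\ast\nu_2\in\cS(r)$; that the result is again a pdf is clear, since a convolution of probability densities is a probability density. The main obstacle is not any single step but the essential reliance on the symmetry hypothesis: convolution does not preserve unimodality in general, and it is precisely the symmetric-interval structure of the level sets (hence the trapezoidal, rather than merely unimodal, shape of the box convolution) that makes the superposition argument close.
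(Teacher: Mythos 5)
Your proof is correct, and it is essentially the same argument the paper relies on: the paper does not prove Lemma~\ref{lemma:convolve_SU} in-text but defers to \cite[Lemma 11]{Nayyar2013optimal}, whose proof is precisely this classical Wintner-type argument of representing a symmetric unimodal density as a nonnegative superposition (layer-cake mixture) of uniform densities on symmetric intervals, convolving boxes into symmetric trapezoids, and using closure of $\cS(0)$ under such superpositions. The only cosmetic point is the open-versus-closed ambiguity of the super-level sets, which you correctly treat as immaterial since it only affects null sets in the convolution integral.
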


The following three lemmas describe some properties of the pre-scheduling and post-scheduling beliefs.
\begin{lemma} \label{lemma:min_0}
    Let $b^{(po.)}$ be a post-scheduling belief such that $b^{(po.)}(\cdot,b) \in \cS(0)$ for all $b \in \cB$. Then, 0 achieves the minimum value in~\eqref{eq:W2}.
\end{lemma}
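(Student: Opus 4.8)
The plan is to exploit the fact that in the dynamic programming recursion~\eqref{eq:W2} the post-scheduling action $\hat{e}$ influences only the immediate estimation error. First I would observe from Lemma~\ref{lemma:prepost_evolve}-1) that the pre-scheduling belief update $b^{(pr.)}_{t+1}=F^{(1)}(b^{(po.)}_t)$ depends on the post-scheduling belief $b^{(po.)}$ alone and \emph{not} on the estimate $\hat{e}$; indeed $\hat{e}$ is a post-scheduling action that produces no observation and leaves the belief dynamics~\eqref{def:beliefevolv} untouched. Consequently the term $\beta W^{(1)}_t(F^{(1)}(b^{(po.)}))$ in~\eqref{eq:W2} is constant with respect to $\hat{e}$, and the minimization collapses to
\[
\inf_{\hat{e} \in \bR} \bE\bl[(e^+ - \hat{e})^2\br] = \inf_{\hat{e} \in \bR} \int_{e \in \bR} \sum_{b \in \cB} (e - \hat{e})^2\, b^{(po.)}(e,b)\, de,
\]
where the expectation is taken over $(e^+,b)$ distributed according to $b^{(po.)}$ and the integrand depends on $e^+$ only.

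Next I would invoke the elementary decomposition $\bE[(e^+ - \hat{e})^2] = \bE[(e^+)^2] - 2\hat{e}\,\bE[e^+] + \hat{e}^2$, which is minimized at $\hat{e} = \bE[e^+]$. Hence it suffices to show that the conditional mean of the post-scheduling error under $b^{(po.)}$ vanishes, i.e. $\bE[e^+]=0$; this is precisely where the hypothesis $b^{(po.)}(\cdot,b) \in \cS(0)$ for all $b\in\cB$ enters.

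Finally, I would write $\bE[e^+] = \int_{e \in \bR} e \sum_{b \in \cB} b^{(po.)}(e,b)\, de$ and introduce the marginal density $\mu(e) := \sum_{b \in \cB} b^{(po.)}(e,b)$. By Definition~\ref{def:SU} each slice $b^{(po.)}(\cdot,b)$ is symmetric about the origin, so $\mu(-e)=\mu(e)$; therefore $e \mapsto e\,\mu(e)$ is odd and integrates to zero, giving $\bE[e^+]=0$ and hence $\hat{e}=0$ as the minimizer, with the resulting minimum value $\bE[(e^+)^2]$. The degenerate case $b^{(po.)} = \delta_{(0,p_{11})}$, arising in the $y(t)\neq\Xi$ branch of~\eqref{eq:theta-evolve}, is covered by the convention $\delta_0 \in \cS(0)$ and trivially has mean zero.

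The main obstacle is technical rather than conceptual: I must guarantee that the first moment is well-defined before invoking the odd-integrand argument. The cleanest route is to note that Assumption~\ref{assump:noise} (finite $\sigma_w^2$) together with the stability Assumption~\ref{assum:stabl} ($a^2(1-p_{01})<1$) keeps the second moment of the error process finite, so $\bE[(e^+)^2]<\infty$ and the mean exists; in the contrary event every $\hat{e}$ yields $+\infty$ and $\hat{e}=0$ still attains the (infinite) infimum. The only other point requiring care is the interchange of the sum over $b\in\cB$ with the integral over $e$, which is justified by nonnegativity of $b^{(po.)}$ and the odd/even symmetry applied slicewise.
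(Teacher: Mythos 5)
Your proposal is correct and follows essentially the same route as the paper: expand $W^{(2)}_{t+1}$ so that the term $\beta W^{(1)}_t(F^{(1)}(b^{(po.)}))$ is seen to be independent of $\hat{e}$, and then use the symmetry of the $\cS(0)$ slices to conclude that $\hat{e}=0$ minimizes the remaining quadratic term. The paper states this last implication in one line, whereas you justify it explicitly via $\hat{e}^\star=\bE[e^+]=0$ and handle the moment-existence caveat; this is a faithful and slightly more detailed completion of the same argument.
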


\begin{proof}
    The right-hand side of~\eqref{eq:W2} can be expanded as
    \al{
    W^{(2)}_{t+1}(b^{(po.)}) & = \beta W^{(1)}_t(F^{(1)}(b^{(po.)})) \notag \\
    & + \inf\limits_{\hat{e} \in \bR} \sum_{b \in \cB} \int_{e \in \bR} (e-\hat{e})^2 b^{(po.)}(e,b) \,de. \label{eq:hat_e_indep}
    }
    Since $b^{(po.)}\in\cS(0)$, the above expression implies that,
    \nal{
    0 \in \argmin_{\hat{e} \in \bR} \sum_{b \in \cB} \int_{e \in \bR} (e-\hat{e})^2 b^{(po.)}(e,b) \,de.
    }
    This concludes the proof.
\end{proof}

\begin{definition}[Even and increasing function ]
    We say that a function $\Gamma: \bR \times [0,1] \rightarrow \bR_+$ is even and increasing if,
    \begin{itemize}
        \item[i)] $\Gamma(\cdot,b)$ is even for each $b \in \cB$, i.e., $\Gamma(e,b)=\Gamma(|e|,b)$,
        \item[ii)] $\Gamma(e,b)$ is non-decreasing in $|e|$ for each $b \in \cB$.
    \end{itemize}
\end{definition}

The following two lemmas show the relationship between pre-scheduling and post-scheduling beliefs.

\begin{lemma} \label{lemma:theta-SU}
Suppose that the following hold:
    \begin{itemize}
        \item[i)] $b^{(pr.)}_t(\cdot,b) \in \cS(0)$ for all $b \in \cB$,
        \item[ii)] $\Gamma_t(\cdot,b)$ is even and increasing.
    \end{itemize}
    Then, the post-scheduling beliefs satisfy $\theta_t(\cdot,b) \in \cS(0)$ for all $b$.
\end{lemma}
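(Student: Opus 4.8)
The plan is to fix an arbitrary $b \in \cB$ and show that the one-dimensional section $b^{(po.)}_t(\cdot,b)$ lies in $\cS(0)$, treating separately the two branches of the update rule~\eqref{eq:theta-evolve}. The branch $y(t)\neq\Xi$ is immediate: here the belief collapses to the point mass $\delta_{(0,p_{11})}$, whose section at $b=p_{11}$ is $\delta_0\in\cS(0)$ by the stated convention, and whose section at every other $b$ is the zero function, which trivially satisfies the defining inequality of $\cS(0)$.

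The substantive branch is $y(t)=\Xi$. Here~\eqref{eq:theta-evolve} gives $b^{(po.)}_t(e,b)=Z^{-1}\,p(b,\Gamma_t(e,b))\,b^{(pr.)}_t(e,b)$, where the normalizer $Z$ is a positive constant independent of $e$. Since scaling by a positive constant preserves membership in $\cS(0)$, it suffices to show that the map $e\mapsto p(b,\Gamma_t(e,b))\,b^{(pr.)}_t(e,b)$ belongs to $\cS(0)$. My strategy is to exhibit each factor as an element of $\cS(0)$ and then invoke closure of $\cS(0)$ under pointwise products of non-negative functions.

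First I would argue that $e\mapsto p(b,\Gamma_t(e,b))$ lies in $\cS(0)$. From~\eqref{eq:prob_y} the drop probability equals $1$ when $\Gamma_t(e,b)=0$ and $1-b\le 1$ when $\Gamma_t(e,b)=1$. Because $\Gamma_t(\cdot,b)$ is even and increasing in $|e|$ by hypothesis (ii), the transmit region $\{\Gamma_t(\cdot,b)=1\}$ is activated only for large $|e|$; hence $p(b,\Gamma_t(\cdot,b))$ is even and non-increasing in $|e|$, i.e., it lies in $\cS(0)$. The second factor $b^{(pr.)}_t(\cdot,b)$ lies in $\cS(0)$ by hypothesis (i). It then remains only to record the elementary closure property: if $f,g\ge 0$ are both even and non-increasing in $|e|$, then so is $fg$, since $|x|\le|y|$ forces $f(x)\ge f(y)\ge 0$ and $g(x)\ge g(y)\ge 0$, whence $f(x)g(x)\ge f(y)g(y)$. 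Applying this to the two factors yields $b^{(po.)}_t(\cdot,b)\in\cS(0)$, completing the substantive branch and hence the lemma.

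I do not anticipate a genuine obstacle, as the argument is essentially mechanical once the product structure is recognized. The one point requiring care is the observation that $p(b,\Gamma_t(\cdot,b))$ is itself symmetric unimodal about the origin; this is precisely what couples hypothesis (ii) to the conclusion, and it hinges on the monotone relationship $p(b,1)\le p(b,0)$ together with the even-and-increasing structure of $\Gamma_t$.
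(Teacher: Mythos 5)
Your proof is correct and follows essentially the same route as the paper's: the $y(t)\neq\Xi$ branch is handled by the point mass, and the $y(t)=\Xi$ branch by observing that $e\mapsto p(b,\Gamma_t(e,b))$ is even and non-increasing in $|e|$ (hence in $\cS(0)$) and that the product of two non-negative $\cS(0)$ functions is again in $\cS(0)$. Your write-up is slightly more explicit than the paper's about why hypothesis (ii) together with $p(b,1)\le p(b,0)$ forces $p(b,\Gamma_t(\cdot,b))\in\cS(0)$, but the argument is the same.
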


\begin{proof}
    The proof follows from Lemma~\ref{lemma:prepost_evolve}. Specifically,
    \begin{itemize}
        \item[1)] If $y(t) \neq \Xi$, then $\theta_t(e,b)=\delta_{(0,p_{11})}(e,b)$. Thus, $\theta_t(\cdot,b) \in \cS(0)$, $b \in \cB$.
        \item[2)] If $y(t) = \Xi$, then we have $p(b,\Gamma_t(e,b))=p(b,\Gamma_t(-e,b))$, and $p(b,\cdot)$ are non-increasing for each $b \in \cB$. This implies that $p(b,\cdot)\in \cS(0)$ for each $b \in \cB$. Since the product of two $\cS(0)$ functions is $\cS(0)$, hence $\theta_t(\cdot,b) \in \cS(0)$, $b \in \cB$.
    \end{itemize}
    This completes the proof.
\end{proof}

\begin{lemma} \label{lemma:pi-SU}
    If $\theta_t(\cdot,b) \in \cS(0)$ for all $b \in \cB$, then $b^{(pr.)}_{t+1}(\cdot,b) \in \cS(0)$, $b \in \cB$.
\end{lemma}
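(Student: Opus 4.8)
The plan is to drive the whole argument through the recursion for the pre-scheduling belief established in Lemma~\ref{lemma:prepost_evolve}(1),
\[
b^{(pr.)}_{t+1}(e,b) = \sum_{b' \in \cB} \bP(b_{t+1}=b \mid b_t =b')\, \bigl(\eta_t(\cdot,b') \ast \varphi\bigr)(e),
\]
where $\eta_t(e,b') = (1/|a|)\,\theta_t(e/a,b')$, and then to verify that every operation on the right-hand side preserves membership in $\cS(0)$ with respect to the error argument $e$. I would organize this as three elementary closure checks applied in sequence: rescaling by $a$, convolution with the noise density, and mixing over the channel states $b'$.

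First I would treat the rescaling. Since $\theta_t(\cdot,b') \in \cS(0)$ by hypothesis and $a \neq 0$, the map $e \mapsto (1/|a|)\theta_t(e/a,b')$ stays in $\cS(0)$: for $|e_1| \le |e_2|$ we have $|e_1/a| \le |e_2/a|$, so $\theta_t(e_1/a,b') \ge \theta_t(e_2/a,b')$, and multiplication by the positive constant $1/|a|$ is order-preserving. Hence $\eta_t(\cdot,b') \in \cS(0)$ for each $b' \in \cB$. Next I would convolve with $\varphi$: Assumption~\ref{assump:noise} guarantees $\varphi$ is symmetric and unimodal about the origin, so $\varphi \in \cS(0)$, and applying Lemma~\ref{lemma:convolve_SU} with $r=0$, $\nu_1 = \eta_t(\cdot,b')$, and $\nu_2 = \varphi$ yields $\eta_t(\cdot,b') \ast \varphi \in \cS(0)$. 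Finally I would take the mixture: the coefficients $\bP(b_{t+1}=b \mid b_t=b')$ are nonnegative, and $\cS(0)$ is closed under nonnegative linear combinations, since for $\nu_1,\nu_2 \in \cS(0)$ and $c_1,c_2 \ge 0$ one has $c_1\nu_1(x)+c_2\nu_2(x) \ge c_1\nu_1(y)+c_2\nu_2(y)$ whenever $|x|\le|y|$. Therefore $b^{(pr.)}_{t+1}(\cdot,b) \in \cS(0)$ for every $b \in \cB$, which is the claim.

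Since the substantive work on convolution is delegated to Lemma~\ref{lemma:convolve_SU}, I do not expect a genuine obstacle here; the only new content is stating the two closure properties precisely. The point deserving the most care is the rescaling step, which implicitly uses $a \neq 0$ (for $a=0$ the dynamics degenerate and $\eta_t$ collapses to a point mass at the origin, after which $\eta_t \ast \varphi = \varphi \in \cS(0)$ trivially), together with the observation that nonnegative weighted sums preserve $\cS(0)$. I would make these two facts explicit as the only checks beyond invoking the cited lemma and Assumption~\ref{assump:noise}.
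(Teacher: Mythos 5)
Your proposal is correct and follows essentially the same route as the paper: the paper's proof is a one-line appeal to the belief recursion of Lemma~\ref{lemma:prepost_evolve} together with Lemma~\ref{lemma:convolve_SU}, and you have simply made explicit the three closure properties (rescaling by $a$, convolution with $\varphi$, and nonnegative mixing over $b'$) that this appeal implicitly uses. The added care about the $a\neq 0$ rescaling and the closure of $\cS(0)$ under nonnegative linear combinations is a faithful elaboration rather than a different argument.
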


\begin{proof}
    The proof follows from Lemma~\ref{lemma:prepost_evolve} since from Lemma~\ref{lemma:convolve_SU} the convolution of two $\cS(0)$ functions is $\cS(0)$.
\end{proof}

As a result of the above lemmas, if we want to show that $b^{(po.)}(\cdot,b) \in \cS(0)$, then it is sufficient to show that the conditions of Lemma~\ref{lemma:theta-SU} hold. We use the majorization theory~\cite{hajek2008paging} to show that these conditions hold.
\begin{definition}[Symmetric rearrangement (SR) of a set]
Let $\mathcal{L}(\mathbb{B})$ denote the Lebesgue measure of a Borel measurable set $\mathbb{B} \subset \bR$. Let $\bB$ be a Borel set such that $\mathcal{L}(\bB) < \infty$. Its symmetric rearrangement, which we denote by $\bB_\sigma$, is the open interval centered around 0 such that $\mathcal{L}(\bB)=\mathcal{L}(\bB_\sigma)$.
\end{definition}

\begin{definition}[SR of a function]
    Let $h:\bR \rightarrow \bR_+$ be an integrable function. Its symmetric decreasing rearrangement is defined as,
    \nal{
    h_\sigma(x) := \int_{0}^{\infty} \mathds{1}_{\{z \in \bR: h(z) > t\}_\sigma} (x) \,dt
    }
\end{definition}

\begin{definition}[Majorization]
    Given two pdfs $\mu$ and $\nu$ on $\bR$, we say $\nu$ \emph{majorizes} $\mu$, and denote it as $\mu \prec \nu$, if
    \nal{
    \int_{|x| \leq t} \mu_\sigma(x)
\,dx \leq  \int_{|x| \leq t} \nu_\sigma(x)
\,dx, \quad \text{for all } t>0.
}

The above condition can equivalently be written as follows. $\mu \prec \nu$ if for any Borel set $\bB \subset \bR, \cL(\bB) < \infty$, there exists another Borel set $\bB' \subset \bR$ with $\cL(\bB)=\cL(\bB')$ and 
\nal{
\int_{\bB} \mu \,dx \leq \int_{\bB'} \nu \,dx.
}
\end{definition}

The next three results follow from~\cite[Appendix A]{Nayyar2013optimal}.
\begin{lemma}\label{lemma:major_convolve}
    Let $\mu \in \cS(0)$ be a pdf. Then, given two pdfs $\nu$ and $\Tilde{\nu}$ such that $\nu \prec \Tilde{\nu}$, and $\Tilde{\nu} \in \cS(r)$ for some $r \in \bR$, we have that $\nu \ast \mu \prec \Tilde{\nu} \ast \mu$.
\end{lemma}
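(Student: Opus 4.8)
The plan is to verify the defining inequality of majorization directly, i.e. to show that for every $t>0$,
\[
\int_{|x|\le t}(\nu\ast\mu)_\sigma(x)\,dx \;\le\; \int_{|x|\le t}(\Tilde{\nu}\ast\mu)_\sigma(x)\,dx .
\]
The starting point is the variational characterization $\int_{|x|\le t}g_\sigma=\sup_{\cL(A)=2t}\int_A g$, valid for any nonnegative integrable $g$, which recasts the left-hand side as a supremum of integrals of $\nu\ast\mu$ over sets of Lebesgue measure $2t$. For the right-hand side I note that $\Tilde{\nu}\in\cS(r)$ and $\mu\in\cS(0)$, so Lemma~\ref{lemma:convolve_SU} gives $\Tilde{\nu}\ast\mu\in\cS(r)$; hence its optimal level set of measure $2t$ is the centered interval and the right-hand side equals $\int_{r-t}^{r+t}(\Tilde{\nu}\ast\mu)$.

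The second step is to strip away the asymmetry of $\nu$. Fixing a set $A$ with $\cL(A)=2t$, I write $\int_A(\nu\ast\mu)=\int \mathds{1}_A\,(\nu\ast\mu)$ and apply the Riesz rearrangement inequality; since $(\mathds{1}_A)_\sigma=\mathds{1}_{[-t,t]}$ and $\mu_\sigma=\mu$ (as $\mu\in\cS(0)$), this yields $\int_A(\nu\ast\mu)\le\int_{-t}^{t}(\nu_\sigma\ast\mu)$. Taking the supremum over $A$, and using the recentering identity $\int_{-t}^{t}(\Tilde{\nu}_\sigma\ast\mu)=\int_{r-t}^{r+t}(\Tilde{\nu}\ast\mu)$ (valid because $\Tilde{\nu}\ast\mu\in\cS(r)$), the problem is reduced to proving $\int_{-t}^{t}(\nu_\sigma\ast\mu)\le\int_{-t}^{t}(\Tilde{\nu}_\sigma\ast\mu)$ for all $t$, where now $p:=\nu_\sigma$ and $q:=\Tilde{\nu}_\sigma$ are symmetric decreasing densities with $p\prec q$.

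The final step settles this fully symmetric inequality by a cumulative-mass argument. I introduce $H(s):=\int_{-\infty}^{s}(q-p)$; since $q-p$ is even and integrates to $0$, the function $H$ is odd, and the hypothesis $p\prec q$ is exactly the statement that $H\ge 0$ on $[0,\infty)$. A short change of variables rewrites $\int_{-t}^{t}\big((q-p)\ast\mu\big)=2\int_{0}^{\infty}\big[\mu(w-t)-\mu(w+t)\big]H(w)\,dw$, and both factors of the integrand are nonnegative for $w\ge 0$: $H(w)\ge 0$ by the preceding line, while $\mu(w-t)\ge\mu(w+t)$ because $|w-t|\le w+t$ and $\mu$ is symmetric and nonincreasing in $|\cdot|$. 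The integral is therefore nonnegative, which closes the argument.

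I expect the main obstacle to be the reduction carried out in the second step. The majorization hypothesis is phrased entirely through symmetric rearrangements, yet the quantity we must control, $\int_A(\nu\ast\mu)$, involves the unsymmetrized $\nu$ integrated over an arbitrary set $A$; replacing $\nu$ by $\nu_\sigma$ and $A$ by the centered interval simultaneously is precisely what the Riesz rearrangement inequality provides, and executing this interchange correctly — rather than the essentially mechanical sign bookkeeping of the last step — is the crux. This is also the point at which the symmetric unimodality of $\mu$, as opposed to its being merely a probability density, becomes indispensable, since convolution with an arbitrary kernel need not preserve the majorization order.
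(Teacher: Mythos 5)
Your argument is correct. Note that the paper itself does not prove this lemma: it is one of the ``next three results'' deferred to~\cite[Appendix A]{Nayyar2013optimal}, so there is no in-paper proof to match against. Your route is essentially the standard one for this kind of statement --- reduce to the symmetric case via the variational characterization of the rearranged integral together with the Riesz rearrangement inequality (this is where $\mu\in\cS(0)$, i.e.\ $\mu_\sigma=\mu$, and Lemma~\ref{lemma:convolve_SU} are used), then verify the inequality for symmetric decreasing densities. Your closing step is a nice self-contained finish: writing $\int_{-t}^{t}\bigl((q-p)\ast\mu\bigr)=2\int_{0}^{\infty}\bigl[\mu(w-t)-\mu(w+t)\bigr]H(w)\,dw$ with $H(s)=\int_{-\infty}^{s}(q-p)$ is a correct integration by parts (the boundary terms vanish since $H(\pm\infty)=0$ and the inner integral of $\mu$ is bounded, and the integrand is even in $w$, which justifies the factor $2$), $H\ge 0$ on $[0,\infty)$ is exactly $p\prec q$, and $\mu(w-t)\ge\mu(w+t)$ for $w\ge 0$ is exactly symmetric unimodality of $\mu$. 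The only points worth flagging are routine: the identity $\int_{|x|\le t}g_\sigma=\sup_{\cL(A)=2t}\int_A g$ and the recentering identity $(\Tilde{\nu}\ast\mu)(x)=(\Tilde{\nu}_\sigma\ast\mu)(x-r)$ deserve a one-line justification each, and the integration by parts should be stated for the absolutely continuous functions $H$ and $w\mapsto\int_{-t-w}^{t-w}\mu$. None of these affects the validity of the proof.
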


\begin{lemma}\label{lemma:H-L-ineq}
    Let $h$ and $g$ be two non-negative integrable functions on $\bR$. Then,
    \nal{
    \int_{\bR} h(x) g(x) \,dx \leq \int_{\bR} h_{\sigma}(x) g_{\sigma}(x) \,dx.
    }
\end{lemma}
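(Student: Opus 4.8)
The plan is to reduce the product inequality to a set-overlap estimate by means of the layer-cake (distribution-function) representation together with Tonelli's theorem, the symmetric rearrangement being measure-preserving on level sets. Since $h$ and $g$ are nonnegative, I would first write $h(x)=\int_0^\infty \mathds{1}_{\{h>s\}}(x)\,ds$ and $g(x)=\int_0^\infty \mathds{1}_{\{g>t\}}(x)\,dt$; multiplying, integrating in $x$, and interchanging the order of integration (legitimate by Tonelli, as every integrand is nonnegative) yields
\nal{
\int_{\bR} h(x)g(x)\,dx = \int_0^\infty \int_0^\infty \cL\big(\{h>s\}\cap\{g>t\}\big)\,ds\,dt .
}

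For the right-hand side, I would exploit that the definition of the symmetric decreasing rearrangement given above already expresses $h_\sigma$ and $g_\sigma$ in layer-cake form through the rearranged super-level sets $\{h>s\}_\sigma$ and $\{g>t\}_\sigma$. Repeating the same Tonelli computation therefore gives
\nal{
\int_{\bR} h_\sigma(x)g_\sigma(x)\,dx = \int_0^\infty \int_0^\infty \cL\big(\{h>s\}_\sigma\cap\{g>t\}_\sigma\big)\,ds\,dt .
}
Comparing the two identities, it suffices to establish, for each pair $(s,t)$, the set inequality $\cL(A\cap B)\le\cL(A_\sigma\cap B_\sigma)$ with $A=\{h>s\}$ and $B=\{g>t\}$, both of finite measure because $h,g$ are integrable.

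This set inequality carries the entire conceptual content. On the one hand $\cL(A\cap B)\le\min(\cL(A),\cL(B))$ trivially. On the other hand, by the definition of the symmetric rearrangement of a set, $A_\sigma$ and $B_\sigma$ are open intervals centered at the origin with $\cL(A_\sigma)=\cL(A)$ and $\cL(B_\sigma)=\cL(B)$; being concentric they are nested, so $A_\sigma\cap B_\sigma$ is the shorter of the two and $\cL(A_\sigma\cap B_\sigma)=\min(\cL(A),\cL(B))$. These two facts combine to give $\cL(A\cap B)\le\cL(A_\sigma\cap B_\sigma)$. Integrating this pointwise-in-$(s,t)$ inequality over $\bR_+\times\bR_+$ and substituting back into the two displayed identities delivers the lemma.

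I expect the only genuine obstacle to be the concentric-interval estimate in the last step, which is short but is exactly where the gain comes from; everything else is routine layer-cake/Tonelli bookkeeping, with the measure-preserving property $\cL(A_\sigma)=\cL(A)$ and the applicability of Tonelli both being immediate from the definitions and the nonnegativity of $h$ and $g$. Note that this lemma is purely analytic and invokes none of Assumptions~\ref{assump:noise}--\ref{assum:channel}.
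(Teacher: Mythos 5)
Your proof is correct. Note, however, that the paper does not actually prove this lemma at all: it is stated as one of three results that ``follow from [Nayyar et al., Appendix A]'', i.e., it is the classical Hardy--Littlewood rearrangement inequality imported by citation. Your argument is the standard self-contained proof of that inequality: the layer-cake representation plus Tonelli reduces both sides to iterated integrals of $\cL(\{h>s\}\cap\{g>t\})$ and $\cL(\{h>s\}_\sigma\cap\{g>t\}_\sigma)$ respectively, and the pointwise-in-$(s,t)$ comparison follows because the rearranged super-level sets are concentric intervals, hence nested, so their intersection has measure $\min(\cL(\{h>s\}),\cL(\{g>t\}))$, which dominates $\cL(\{h>s\}\cap\{g>t\})$. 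All the supporting details check out: for $s,t>0$ the super-level sets have finite measure by Chebyshev's inequality (so their rearrangements are well defined), Tonelli applies by nonnegativity, and you correctly exploit that the paper's definition of $h_\sigma$ is already in layer-cake form over the rearranged level sets, so you never need the (true but unstated) identity $\{h_\sigma>s\}=\{h>s\}_\sigma$. You are also right that none of Assumptions~\ref{assump:noise}--\ref{assum:channel} are involved. Relative to the paper, your write-up buys self-containedness at the cost of about a page; the paper buys brevity by outsourcing the result to a standard reference.
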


\begin{lemma}\label{lemma:pdf_SU}
    If $\mu$ and $\nu$ are two pdfs on $\bR$ such that $\mu \prec \nu$, then for any symmetric and unimodal function $h$,
    \nal{
    \int_{\bR} \mu_{\sigma}(x) h(x) \,dx \leq \int_{\bR} \nu_{\sigma}(x) h(x) \,dx.
    }
\end{lemma}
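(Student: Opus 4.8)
The plan is to prove the inequality by decomposing the symmetric unimodal weight $h$ into its superlevel sets and thereby reducing the claim to the very inequality that defines majorization. First I would record the one structural fact on which everything hinges: since $h \in \cS(0)$ is symmetric and unimodal about the origin, each of its strict superlevel sets $\{x \in \bR : h(x) > t\}$ is a symmetric open interval $(-R(t),R(t))$ for some $R(t) \in [0,\infty]$ (empty once $t \ge \sup h$). This is the only place the symmetric-unimodal hypothesis on $h$ is used; the rest of the argument relies solely on $\mu_\sigma$ and $\nu_\sigma$ being symmetric-decreasing densities together with the majorization inequality $\mu \prec \nu$.

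Assuming first that $h \ge 0$, I would invoke the layer-cake representation $h(x) = \int_{0}^{\infty} \mathds{1}\{h(x) > t\}\,dt$ and apply Tonelli's theorem (all integrands non-negative) to obtain $\int_{\bR} \mu_\sigma(x) h(x)\,dx = \int_{0}^{\infty} \big(\int_{|x| < R(t)} \mu_\sigma(x)\,dx\big)\,dt$, with the identical formula holding for $\nu_\sigma$. The inner integral is exactly the cumulative mass of a symmetric-decreasing rearrangement over a symmetric interval, so the hypothesis $\mu \prec \nu$ furnishes $\int_{|x| < R(t)} \mu_\sigma \le \int_{|x| < R(t)} \nu_\sigma$ for every $t$ (the open-versus-closed endpoint distinction is immaterial, as the boundary carries no mass for the absolutely continuous $\mu_\sigma,\nu_\sigma$). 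Integrating this pointwise-in-$t$ inequality over $t \in \bR_+$ yields the claim.

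To remove the sign restriction I would use that $\mu_\sigma$ and $\nu_\sigma$ are probability densities, each integrating to $1$: adding a constant $c$ to $h$ changes both sides of the target inequality by the same amount $c$, so a bounded-below $h$ may be shifted upward to become non-negative without loss of generality. For an unbounded weight (e.g.\ $h(0)=+\infty$) I would truncate to $h \wedge n$, which remains symmetric, unimodal, and non-negative, apply the bounded case, and pass to the limit via monotone convergence on each side.

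I do not anticipate a genuine obstacle, since the argument is essentially a restatement of the definition of majorization; the only points needing care are the identification of the superlevel sets as symmetric intervals (immediate from $h \in \cS(0)$) and the interchange of the order of integration, which Tonelli licenses once $h$ is non-negative. As a cross-check I would note the equivalent radial route: writing $M_\mu(t) := \int_{|x| \le t} \mu_\sigma\,dx$ and using that $h$ is non-increasing in $|x|$ so that $-dh \ge 0$, an integration by parts multiplies the non-negative measure $-dh$ against $M_\mu(t) \le M_\nu(t)$ and delivers the same conclusion.
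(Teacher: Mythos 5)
Your proof is correct: the layer-cake decomposition of $h$ into its symmetric superlevel sets $\{h>t\}=(-R(t),R(t))$, followed by Tonelli and the defining majorization inequality applied at each radius $R(t)$, is exactly the standard argument, and it matches the one in \cite[Appendix A]{Nayyar2013optimal}, to which the paper defers for this lemma without reproducing a proof. The only minor caveat is that your constant-shift normalization requires $h$ to be bounded below (so that both integrals are well defined), which is harmless here since the lemma is only ever invoked with $h(e)=m-\min\{m,(e-\hat{e})^2\}$, a bounded non-negative function.
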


We next define a relation and its property for pre-scheduling and post-scheduling beliefs. 

\begin{definition}[Relation $\mathcal{R}$]
    Let $\mu$ and $\nu$ be two pre-scheduling beliefs (post-scheduling beliefs) from $\bR \times [0,1]$ to $\bR_+$. We say $\mu \cR \nu$ if,
    \begin{itemize}
        \item[1)] for each $b \in \cB$, $\mu(\cdot,b) \prec \nu(\cdot,b)$,
        \item[2)] for all $b \in \cB$, $\nu(\cdot,b) \in \cS(e)$ for some point $e \in \bR$.
    \end{itemize}
\end{definition}

\begin{definition}[Property $\cR$]
We say that a value function $W: \bR_+ \rightarrow \bR$
satisfies property $\cR$, if $\mu \cR \nu$ implies that $W(\mu) \geq W(\nu)$ $W^{(1)}(\mu) \geq W^{(1)}(\nu) (W^{(2)}(\mu) \geq W^{(2)}(\nu))$.

\end{definition}
The above property intuitively means that $\nu$ is a better pdf for estimating the value of error $e$~\eqref{def:e} in comparison to $\mu$ since it is more symmetrically concentrated around $e$. Next, we present some results that are used for proving Proposition~\ref{prop:W_R}.
\begin{lemma}\label{lemma:R_F(theta)}
    If $b^{(po.)}_t \cR \Tilde{b}^{(po.)}_t$, $t \in \bZ_+$, then $F^{(1)}(b^{(po.)}_t)$ $\cR$ $F^{(1)}(\Tilde{b}^{(po.)}_t)$, where $F^{(1)}$ is the transformation of $b^{(pr.)}$ given $b^{(po.)}$ as shown in Lemma~\ref{lemma:prepost_evolve}.
\end{lemma}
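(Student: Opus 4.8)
The plan is to exploit the three-stage structure of the map $F^{(1)}$ recorded in Lemma~\ref{lemma:prepost_evolve}. For each fixed channel value $b'$, $F^{(1)}$ (i) rescales the post-scheduling slice $b^{(po.)}_t(\cdot,b')$ by $a$ into the density $\eta_t(\cdot,b')=(1/|a|)\,b^{(po.)}_t(\cdot/a,b')$, (ii) convolves it with the noise pdf $\varphi$, and (iii) mixes the results over $b'$ with the nonnegative transition weights $\bP(b_{t+1}=b\mid b_t=b')$ to produce $b^{(pr.)}_{t+1}(\cdot,b)$. I would propagate the two defining conditions of $\cR$ — slicewise majorization, and the majorant being symmetric-unimodal about a single common center — through each of these stages. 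Write $\mu:=b^{(po.)}_t$ and $\nu:=\Tilde b^{(po.)}_t$, so that by hypothesis $\mu(\cdot,b')\prec\nu(\cdot,b')$ for every $b'$, and $\nu(\cdot,b')\in\cS(e_0)$ for a common center $e_0$; throughout, the two slices being compared carry equal mass, so the majorization comparisons are well posed, and I denote by $\eta^{\mu}_t,\eta^{\nu}_t$ the dilations of $\mu,\nu$.

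For the scaling stage I would check that a common dilation preserves majorization: the change of variables $y=e/a$ gives $\int_{|e|\le t}(\eta^{\mu}_t(\cdot,b'))_\sigma = \int_{|y|\le t/|a|}(\mu(\cdot,b'))_\sigma$ and likewise for $\nu$, so $\mu(\cdot,b')\prec\nu(\cdot,b')$ yields $\eta^{\mu}_t(\cdot,b')\prec\eta^{\nu}_t(\cdot,b')$, and that dilation sends the center $e_0$ to $ae_0$, i.e.\ $\eta^{\nu}_t(\cdot,b')\in\cS(ae_0)$ for every $b'$. For the convolution stage, since $\varphi\in\cS(0)$ by Assumption~\ref{assump:noise}, Lemma~\ref{lemma:major_convolve} (applied with majorant in $\cS(ae_0)$) gives $\eta^{\mu}_t(\cdot,b')\ast\varphi\prec\eta^{\nu}_t(\cdot,b')\ast\varphi$, while Lemma~\ref{lemma:convolve_SU} gives $\eta^{\nu}_t(\cdot,b')\ast\varphi\in\cS(ae_0)$. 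Thus after stages (i)--(ii) I have, for every $b'$, a pair $f_{b'}:=\eta^{\mu}_t(\cdot,b')\ast\varphi\prec g_{b'}:=\eta^{\nu}_t(\cdot,b')\ast\varphi$, with \emph{all} the majorants $g_{b'}$ symmetric-unimodal about the \emph{same} center $ae_0$.

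It remains to pass through the mixing stage, which is where the common center becomes essential and which I expect to be the main obstacle: majorization is not additive in general, so $f_{b'}\prec g_{b'}$ for each $b'$ does not by itself give $\sum_{b'}w_{b'}f_{b'}\prec\sum_{b'}w_{b'}g_{b'}$. The way around this is precisely that all $g_{b'}\in\cS(ae_0)$. Condition~2 of $\cR$ then follows at once, since a nonnegative combination of functions in $\cS(ae_0)$ is again in $\cS(ae_0)$, whence $\Tilde b^{(pr.)}_{t+1}(\cdot,b)\in\cS(ae_0)$ for every $b$ with the single center $ae_0$. For condition~1 I would verify the set-form of majorization directly: given any Borel $\bB$ with $\cL(\bB)<\infty$, take $\bB'$ to be the interval of length $\cL(\bB)$ centered at $ae_0$. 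Because each $g_{b'}\in\cS(ae_0)$, one has $\int_{\bB'}g_{b'}=\int_{|x|\le \cL(\bB)/2}(g_{b'})_\sigma$, while Lemma~\ref{lemma:H-L-ineq} (with the indicator of $\bB$) yields $\int_{\bB}f_{b'}\le \int_{|x|\le \cL(\bB)/2}(f_{b'})_\sigma$, and $f_{b'}\prec g_{b'}$ gives $\int_{|x|\le \cL(\bB)/2}(f_{b'})_\sigma\le \int_{|x|\le \cL(\bB)/2}(g_{b'})_\sigma$. Summing these against the weights $w_{b'}=\bP(b_{t+1}=b\mid b_t=b')$ gives $\int_{\bB}b^{(pr.)}_{t+1}(\cdot,b)\le \int_{\bB'}\Tilde b^{(pr.)}_{t+1}(\cdot,b)$, which is exactly $b^{(pr.)}_{t+1}(\cdot,b)\prec\Tilde b^{(pr.)}_{t+1}(\cdot,b)$. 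With both conditions of $\cR$ verified, this establishes $F^{(1)}(b^{(po.)}_t)\ \cR\ F^{(1)}(\Tilde b^{(po.)}_t)$. The only place requiring genuine care is this last superadditivity bound, which would fail without the common-center guarantee supplied by condition~2 of $\cR$.
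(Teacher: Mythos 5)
Your proof is correct and follows the same core route as the paper for the scaling and convolution stages: dilation by $a$ preserves slicewise majorization and moves the center from $e_0$ to $ae_0$, Lemma~\ref{lemma:major_convolve} propagates majorization through convolution with $\varphi\in\cS(0)$, and Lemma~\ref{lemma:convolve_SU} preserves the symmetric-unimodal property of the majorant. Where you go beyond the paper is the mixing stage. The paper's proof simply asserts that establishing the two slicewise conditions ``suffices'' to conclude $b^{(pr.)}_{t+1}\,\cR\,\Tilde{b}^{(pr.)}_{t+1}$, leaving unaddressed the fact that majorization is not additive, so $f_{b'}\prec g_{b'}$ for each $b'$ does not automatically yield majorization of the weighted mixtures. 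You correctly identify this as the genuine obstacle and close it: because every majorant slice $g_{b'}$ lies in $\cS(ae_0)$ with a \emph{common} center, the interval $\bB'$ of length $\cL(\bB)$ centered at $ae_0$ simultaneously realizes $\int_{\bB'}g_{b'}=\int_{|x|\le\cL(\bB)/2}(g_{b'})_\sigma$ for all $b'$, and combining the Hardy--Littlewood bound of Lemma~\ref{lemma:H-L-ineq} with slicewise majorization and summing against the nonnegative transition weights gives the set-form of $\prec$ for the mixtures; the cone property of $\cS(ae_0)$ handles condition~2. This added argument is exactly the justification the paper's one-line sufficiency claim requires, and your observation that it would fail without the common-center guarantee is the right diagnosis of why condition~2 of $\cR$ is not merely decorative.
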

\begin{proof}
    Let $b^{(pr.)}_{t+1}=F^{(1)}(b^{(po.)}_t)$ and $\Tilde{b}^{(pr.)}_{t+1}=F^{(1)}(\Tilde{b}^{(po.)}_t)$. Then, from Lemma~\ref{lemma:prepost_evolve} we have
    \nal{
    b^{(pr.)}_{t+1}(e,b)= \sum_{b' \in \cB} \bP(b_{t+1}=b \mid b_t =b') \eta_t(e,b') \ast \varphi,
    }
    where $\eta_t(e,b') = (1/|a|) b^{(po.)}_t(e/a,b')$. Similarly
    \nal{
    \Tilde{b}^{(pr.)}_{t+1}(e,b)= \sum_{b' \in \cB} \bP(b_{t+1}=b \mid b_t =b') \Tilde{\eta}_t(e,b') \ast \varphi,
    }
    where $\Tilde{\eta}_t(e,b') = (1/|a|) \Tilde{b}^{(po.)}_t(e/a,b')$.

    Now, in order to show $b^{(pr.)}_{t+1} \cR \Tilde{b}^{(pr.)}_{t+1}$, it suffices to show
    \begin{itemize}
        \item[1)] for each $b' \in \cB$, we have $\varphi \ast \eta_t(\cdot,b') \prec \varphi \ast \Tilde{\eta}_t(\cdot,b')$,
        \item[2)] for all $b'$, we have $\varphi \ast \Tilde{\eta}_t(\cdot,b')\in \cS(e')$ for some $e' \in \bR$.
    \end{itemize}
    
    First we consider 1). Since we are given $b^{(po.)}_t(\cdot,b) \prec \Tilde{b}^{(po.)}_t(\cdot,b)$, it follows that $\eta_t(\cdot,b') \prec \Tilde{\eta}_t(\cdot,b')$. This combined with Lemma~\ref{lemma:major_convolve} proves 1).

    Next we consider 2). Since we are given that for all $b \in \cB, \Tilde{b}^{(po.)}_t(\cdot,b)\in \cS(e)$ for some $e \in \bR$ , it follows that $\Tilde{\eta}_t(\cdot,b') \in \cS(e')$ for some $e' \in \bR$ ($e$ and $e'$ may be different except for the case when $e=e'=0$) and $b' \in \cB$ . This combined with Lemma~\ref{lemma:convolve_SU} proves that $\varphi \ast \Tilde{\eta}_t(\cdot,b') \in \cS(e')$. This shows 2). This concludes the proof.
\end{proof}

\begin{lemma}\label{lemma:R_L}
    Given a post-scheduling belief $b^{(po.)}$, let $L(b^{(po.)}):=\inf\limits_{\hat{e} \in \bR} \sum_{b \in \cB} \int_{e \in \bR} (e-\hat{e})^2 b^{(po.)}(e,b) \,de$. For any $b^{(po.)}$ and $\Tilde{b}^{(po.)}$ such that $b^{(po.)} \cR \Tilde{b}^{(po.)}$, it follows that $L(b^{(po.)}) \geq L(\Tilde{b}^{(po.)})$, i.e., $L$ satisfies property $\cR$.
\end{lemma}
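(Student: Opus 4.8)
The plan is to reduce $L$ to a one‑dimensional least‑squares problem on the $e$‑marginal and then compare the two marginals through symmetric decreasing rearrangements. Write $m(e):=\sum_{b\in\cB}b^{(po.)}(e,b)$ and $\Tilde{m}(e):=\sum_{b\in\cB}\Tilde{b}^{(po.)}(e,b)$ for the marginal densities of $e$. The inner objective equals $\int(e-\hat e)^2 m(e)\,de$, which depends on $\hat e$ only through a single quadratic, so its minimiser is the mean of $m$ and $L(b^{(po.)})$ is the variance of $m$; likewise for $\Tilde{m}$. The decisive consequence of condition 2) in the definition of $\cR$ is that every slice $\Tilde{b}^{(po.)}(\cdot,b)$ lies in $\cS(e_0)$ for one common centre $e_0$, so $\Tilde{m}$ is symmetric about $e_0$, its mean is exactly $e_0$, and hence $L(\Tilde{b}^{(po.)})=\sum_b\int(e-e_0)^2\Tilde{b}^{(po.)}(e,b)\,de=\sum_b\int u^2\,(\Tilde{b}^{(po.)}(\cdot,b))_\sigma(u)\,du$, using that the rearrangement of an $\cS(e_0)$ function is merely its recentring.

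Next I would bound $L(b^{(po.)})$ from below componentwise. Let $\hat{e}^\star$ be the optimal estimate for $b^{(po.)}$ and set $w_b:=\int b^{(po.)}(\cdot,b)\,de$. For each fixed $b$ I would apply the Hardy--Littlewood inequality (Lemma~\ref{lemma:H-L-ineq}) to the non‑negative functions $b^{(po.)}(\cdot,b)$ and $g_M(e):=(M-(e-\hat{e}^\star)^2)^+$, noting that $g_M$ is symmetric and unimodal about $\hat{e}^\star$ with rearrangement $(M-u^2)^+$, obtaining $\int b^{(po.)}(e,b)\,g_M(e)\,de\le\int (b^{(po.)}(\cdot,b))_\sigma(u)\,(M-u^2)^+\,du$. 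Letting $M\to\infty$ and cancelling the common diverging mass term $M\,w_b$ upgrades this to the second‑moment bound $\int(e-\hat{e}^\star)^2 b^{(po.)}(e,b)\,de\ge\int u^2\,(b^{(po.)}(\cdot,b))_\sigma(u)\,du$. Summing over $b\in\cB$ gives $L(b^{(po.)})\ge\sum_b\int u^2\,(b^{(po.)}(\cdot,b))_\sigma\,du$.

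Finally I would invoke the majorisation hypothesis componentwise. By condition 1) of $\cR$, $b^{(po.)}(\cdot,b)\prec\Tilde{b}^{(po.)}(\cdot,b)$ for each $b$ (each slice normalised by its common mass $w_b$ to a genuine pdf). Applying Lemma~\ref{lemma:pdf_SU} with the symmetric unimodal test function $(M-u^2)^+$ and again sending $M\to\infty$ yields $\int u^2\,(b^{(po.)}(\cdot,b))_\sigma\,du\ge\int u^2\,(\Tilde{b}^{(po.)}(\cdot,b))_\sigma\,du$ for every $b$. Chaining the three relations,
\[
L(b^{(po.)})\ \ge\ \sum_{b\in\cB}\int u^2\,(b^{(po.)}(\cdot,b))_\sigma\,du\ \ge\ \sum_{b\in\cB}\int u^2\,(\Tilde{b}^{(po.)}(\cdot,b))_\sigma\,du\ =\ L(\Tilde{b}^{(po.)}),
\]
which is precisely the assertion that $L$ satisfies property $\cR$.

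The main obstacle is the two limiting arguments that upgrade the bounded rearrangement inequalities (which apply only to symmetric unimodal, hence bounded, test functions) to the unbounded cost $u^2$: one must justify exchanging $M\to\infty$ with the integrals and verify that the diverging $M\,w_b$ terms cancel, which requires the beliefs to have finite second moments — guaranteed here by $\sigma_w^2<\infty$ (Assumption~\ref{assump:noise}) together with the stability Assumption~\ref{assum:stabl}. A second point worth stressing is that the common‑centre requirement in $\cR$ is essential and is used twice: it forces the optimal estimate for $\Tilde{m}$ to be exactly $e_0$, and it makes each $(\Tilde{b}^{(po.)}(\cdot,b))_\sigma$ the exact recentring of its slice, so that the final equality legitimately identifies $\sum_b\int u^2(\Tilde{b}^{(po.)}(\cdot,b))_\sigma$ with $L(\Tilde{b}^{(po.)})$; without a common centre $\Tilde{m}$ need not be symmetric and this identification fails.
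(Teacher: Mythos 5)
Your proof is correct and follows essentially the same route as the paper's: both rely on the truncated quadratic test function $(m-(e-\hat e)^2)^+$, the Hardy--Littlewood inequality (Lemma~\ref{lemma:H-L-ineq}), the majorization comparison for symmetric unimodal test functions (Lemma~\ref{lemma:pdf_SU}), the fact that the rearrangement of an $\cS(e')$ slice is its recentring, and a monotone-convergence passage $m\to\infty$. The only difference is organizational — you split the paper's single chain of inequalities at the intermediate quantity $\sum_b\int u^2(\cdot)_\sigma\,du$ and take two limits instead of one — which does not change the substance of the argument.
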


\begin{proof}
    Given $b^{(po.)} \cR \Tilde{b}^{(po.)}$, we assume $\Tilde{b}^{(po.)}(\cdot,b) \in \cS(e')$ for some $e' \in \bR$ and for all $b \in \cB$. Now, for a constant $m>0$, consider a function $h(e,m)=m-\min\{m,(e-\hat{e})^2\}$. Then for any $b \in \cB$,
    \nal{
    \int_{e \in \bR} h(e,m) &b^{(po.)}(e,b) \,de  \leq \int_{e \in \bR} h_{\sigma}(e,m) b^{(po.)}_{\sigma}(e,b) \,de \\
    & =\int_{e \in \bR} (m-\min\{m,e^2\}) b^{(po.)}_{\sigma}(e,b) \,de \\
    & \leq \int_{e \in \bR} (m-\min\{m,e^2\}) \Tilde{b}^{(po.)}_{\sigma}(e,b) \,de \\
    & = \int_{e \in \bR} (m-\min\{m,(e-e')^2\}) \Tilde{b}^{(po.)}(e,b) \,de,
    }
    where the first inequality follows from Lemma~\ref{lemma:H-L-ineq}. The first equality follows from the SR of the function $h(\cdot,\cdot)$. The last inequality follows from Lemma~\ref{lemma:pdf_SU} along with the assumption that $b^{(po.)} \cR \Tilde{b}^{(po.)}$. Finally, the last equality follows since $\Tilde{b}^{(po.)}(\cdot,b) \in \cS(e')$.~Now, the above inequality implies that for any $\hat{x} \in \bR$ we have,
    \nal{
    & \int_{e \in \bR} (m-\min\{m,(e-\hat{e})^2\}) b^{(po.)}(e,b) \,de  \\
    & \leq \int_{e \in \bR} (m-\min\{m,(e-e')^2\}) \Tilde{b}^{(po.)}(e,b) \,de.
    }
    Since $\int_{e \in \bR} b^{(po.)}(e,b) \,de = \int_{e \in \bR} \Tilde{b}^{(po.)}(e,b) \,de$, the above inequality can be further written as follows,
    \nal{
    & \int_{e \in \bR} (\min\{m,(e-\hat{e})^2\}) b^{(po.)}(e,b) \,de  \\
    & \geq \int_{e \in \bR} (\min\{m,(e-e')^2\}) \Tilde{b}^{(po.)}(e,b) \,de.
    }
    Upon letting $m\to\infty$ in the above expression it follows from monotone convergence theorem~\cite{grimmett2020probability} that, 
    \nal{
    & \int_{e \in \bR} (e-\hat{e})^2 b^{(po.)}(e,b) \,de \geq \int_{e \in \bR} (e-e')^2 \Tilde{b}^{(po.)}(e,b) \,de.
    }
    Summing the above for all $b \in \cB$ preserves the inequality, i.e., 
    \nal{
    & \sum_{b \in \cB} \int_{e \in \bR} (e-\hat{e})^2 b^{(po.)}(e,b) \,de  \\
    & \geq \sum_{b \in \cB} \int_{e \in \bR} (e-e')^2 \Tilde{b}^{(po.)}(e,b) \,de.
    }
    Finally, taking infimum over $\hat{e}$ in the left-hand side and then taking infimum over $e'$ in the right-hand side proves the lemma.
\end{proof}

\begin{definition}\label{def:restrict}
    For a pdf $\mu:\bR \rightarrow \bR_+$ and a Borel set $\bB$, we use $\mu_{\bB}$ to denote the restriction of $\mu$ to $\bB$, i.e., 
    \nal{
    \mu_{\bB} (x) = 
    \begin{cases}
        \frac{\mu(x)}{\int_{x' \in \bB} \mu(x') \,dx'} & \mbox{ if } x \in \bB,\\
        0 &\mbox{ otherwise.}
    \end{cases}
    }
\end{definition}
The following two results are essentially Lemma 7 and Lemma 8 of~\cite{Lipsa2011remote}.
\begin{lemma}\label{lemma:symm_int}
    Let $\Tilde{b}^{(pr.)}(\cdot,b) \in \cS(0)$, $b \in \cB$ be a pre-scheduling belief. Then for any $b$ and $0< m(b) \leq 1$, there exists a symmetric interval $\mathcal{J}$ centered around $0$ such that
    \nal{
    \Tilde{b}^{(pr.)}_{\mathcal{J}}(\cdot,b) \succ \Tilde{b}^{(pr.)}_{\mathcal{J}'}(\cdot,b) \text{ and } \int_{\cJ} \Tilde{b}^{(pr.)}(e,b) \,de = 1-m(b),
    }
    for any Borel set $\cJ' \subset \bR$ such that $\int_{\cJ'} \Tilde{b}^{(pr.)}(e,b) \,de = 1-m(b)$.
\end{lemma}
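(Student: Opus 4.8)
The plan is to take $\cJ$ to be the symmetric interval about the origin that captures exactly the required mass, and then to verify the majorization claim using the super-level-set (bathtub) characterization of the symmetric decreasing rearrangement. Fix $b\in\cB$ and write $g:=\Tb^{(pr.)}(\cdot,b)\in\cS(0)$, which is symmetric and non-increasing in $|e|$. Since pre-scheduling beliefs are obtained by convolving a post-scheduling belief with the noise density $\varphi$ (cf.\ Lemma~\ref{lemma:prepost_evolve}), $g$ is atomless, so $t\mapsto\int_{-t}^{t}g(e)\,de$ is continuous and non-decreasing starting from $0$. By the intermediate value theorem (the target $1-m(b)$ not exceeding the total mass $\int_{\bR}g$) there is a $t^\star\ge 0$ with $\int_{-t^\star}^{t^\star}g(e)\,de=1-m(b)$; set $\cJ:=[-t^\star,t^\star]$. (The degenerate case $m(b)=1$ makes the restriction ill-defined and is excluded.) This gives the mass condition $\int_{\cJ}\Tb^{(pr.)}(e,b)\,de=1-m(b)$ immediately. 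Moreover, because any competing $\cJ'$ also captures mass $1-m(b)$, both restrictions $\Tb^{(pr.)}_{\cJ}(\cdot,b)$ and $\Tb^{(pr.)}_{\cJ'}(\cdot,b)$ carry the \emph{same} normalizing constant $1/(1-m(b))$, so it suffices to compare $g\,\mathds{1}_{\cJ}$ with $g\,\mathds{1}_{\cJ'}$ and then divide.

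By the cumulative definition of majorization, establishing $\Tb^{(pr.)}_{\cJ}(\cdot,b)\succ\Tb^{(pr.)}_{\cJ'}(\cdot,b)$ amounts to showing $\int_{|x|\le t}(\Tb^{(pr.)}_{\cJ'})_\sigma(x)\,dx\le\int_{|x|\le t}(\Tb^{(pr.)}_{\cJ})_\sigma(x)\,dx$ for every $t>0$. I would first note that $\Tb^{(pr.)}_{\cJ}(\cdot,b)$ is the restriction of the symmetric decreasing $g$ to a symmetric interval about $0$, hence itself symmetric and non-increasing, so it equals its own symmetric decreasing rearrangement; thus the right-hand side equals $\tfrac{1}{1-m(b)}\int_{-\min(t,t^\star)}^{\min(t,t^\star)}g(e)\,de$, which is $\tfrac{1}{1-m(b)}\int_{-t}^{t}g$ when $t\le t^\star$ and equals $1$ when $t>t^\star$.

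For the left-hand side I would use the bathtub/super-level-set identity $\int_{|x|\le t}h_\sigma(x)\,dx=\sup\{\int_{A}h(x)\,dx:\ \cL(A)\le 2t\}$, valid for non-negative integrable $h$ (a direct consequence of the layer-cake representation, and consistent with the Hardy--Littlewood inequality of Lemma~\ref{lemma:H-L-ineq}). Applying it to $h=\Tb^{(pr.)}_{\cJ'}$ and writing $B:=A\cap\cJ'$ gives $\int_{|x|\le t}(\Tb^{(pr.)}_{\cJ'})_\sigma=\tfrac{1}{1-m(b)}\sup\{\int_{B}g:\ B\subseteq\cJ',\ \cL(B)\le 2t\}\le\tfrac{1}{1-m(b)}\sup\{\int_{B}g:\ \cL(B)\le 2t\}$. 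Since $g$ is symmetric and non-increasing, this last supremum is attained by the central interval $[-t,t]$, so the left-hand side is bounded by $\tfrac{1}{1-m(b)}\int_{-t}^{t}g(e)\,de$. Comparing with the expression for the right-hand side: when $t\le t^\star$ the two bounds coincide, and when $t>t^\star$ the right-hand side equals $1$ while the left-hand side never exceeds $1$. In both cases the inequality holds for all $t$, which establishes $\Tb^{(pr.)}_{\cJ'}(\cdot,b)\prec\Tb^{(pr.)}_{\cJ}(\cdot,b)$, i.e.\ $\Tb^{(pr.)}_{\cJ}(\cdot,b)\succ\Tb^{(pr.)}_{\cJ'}(\cdot,b)$.

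The crux is the maximality step: a symmetric decreasing density places the most mass on the central interval among all Borel sets of a given Lebesgue measure, together with the sup-characterization of $\int_{|x|\le t}h_\sigma$. The relaxation from $B\subseteq\cJ'$ to arbitrary $B$ is precisely what disposes of the geometry of $\cJ'$ (which may be spread into low-density tails and thus have larger Lebesgue measure than $\cJ$), so no case analysis on the shape of $\cJ'$ is required. The only other point needing care is the continuity/atomlessness used to realize the mass $1-m(b)$ exactly by a symmetric interval.
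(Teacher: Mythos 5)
Your argument is correct. Note that the paper does not actually prove this lemma --- it only remarks that it is ``essentially Lemma 7 of~\cite{Lipsa2011remote}'' --- so your proposal supplies a self-contained proof where the paper relies on a citation. The route you take is the standard one and matches what that reference does in spirit: pick $\cJ=[-t^\star,t^\star]$ by the intermediate value theorem, observe that $g\mathds{1}_{\cJ}$ is already its own symmetric decreasing rearrangement so the right-hand cumulative integral is $\int_{-\min(t,t^\star)}^{\min(t,t^\star)}g$, and bound the left-hand side via the identity $\int_{|x|\le t}h_\sigma=\sup\{\int_A h:\cL(A)\le 2t\}$ followed by the relaxation $B\subseteq\cJ'$ to arbitrary $B$, which is exactly the right way to avoid any case analysis on the geometry of $\cJ'$. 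The equality of the two normalizing constants and the split into $t\le t^\star$ versus $t>t^\star$ are both handled correctly. The only point worth flagging is your atomlessness step: the paper's convention admits $\delta_0\in\cS(0)$, and the initial pre-scheduling belief $b^{(pr.)}_0$ is a point mass, in which case $t\mapsto\int_{-t}^{t}g$ is not continuous and a symmetric interval capturing an intermediate mass $1-m(b)$ need not exist; your appeal to the convolution structure covers all $t\ge 1$, and in the degenerate atomic case the only achievable masses are $0$ and the full mass, for which the statement is trivial, so this is a cosmetic rather than substantive gap.
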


\begin{lemma}\label{lemma:pi_tildepi}
    Let $b^{(pr.)}$ and $\Tilde{b}^{(pr.)}$ be two pre-transmission beliefs such that $b^{(pr.)} \cR \Tilde{b}^{(pr.)}$ with $\Tilde{b}^{(pr.)}(\cdot,b) \in \cS(0)$. Let $\cJ$ and $m(b)$ be as in the above lemma for $\Tilde{b}^{(pr.)}$. Given $b$, if $\cJ'$ is such that $\int_{\cJ'} b^{(pr.)}(e,b) \,de = 1-m(b)$, then $b^{(pr.)}_{\cJ'}(\cdot,b) \prec \Tilde{b}^{(pr.)}_{\cJ}(\cdot,b)$.
\end{lemma}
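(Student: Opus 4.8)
The plan is to verify the two defining inequalities of majorization directly, after first clearing the normalization. Both restrictions in Definition~\ref{def:restrict} carry the \emph{same} normalizing constant, namely $1/(1-m(b))$, because $\int_{\cJ'} b^{(pr.)}(e,b)\,de = \int_{\cJ}\Tilde{b}^{(pr.)}(e,b)\,de = 1-m(b)$; since $\prec$ is unchanged when both compared densities are scaled by a common positive factor, it suffices to show $b^{(pr.)}(\cdot,b)\mathds{1}_{\cJ'} \prec \Tilde{b}^{(pr.)}(\cdot,b)\mathds{1}_{\cJ}$ for the two \emph{unnormalized} restrictions, each of total mass $1-m(b)$. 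I would then use the set-characterization of $\prec$: for an arbitrary Borel set $\bB$ with $\cL(\bB)=2t$ I must produce a set $\bB'$ of the same measure with $\int_{\bB\cap\cJ'} b^{(pr.)}(e,b)\,de \le \int_{\bB'\cap\cJ} \Tilde{b}^{(pr.)}(e,b)\,de$. Writing $\cJ=[-r,r]$ and using $\Tilde{b}^{(pr.)}(\cdot,b)\in\cS(0)$ (so that $\Tilde{b}^{(pr.)}(\cdot,b)\mathds{1}_{\cJ}$ is symmetric and decreasing), the natural candidate is the symmetric interval $\bB'=[-t,t]$, for which the right-hand side is $\int_{[-\min(t,r),\,\min(t,r)]}\Tilde{b}^{(pr.)}(e,b)\,de$.

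The crux is a single two-sided estimate of the left-hand side. On the one hand, $\int_{\bB\cap\cJ'} b^{(pr.)}(e,b)\,de \le \int_{\cJ'} b^{(pr.)}(e,b)\,de = 1-m(b)$. On the other hand, dropping $\cJ'$ and applying Lemma~\ref{lemma:H-L-ineq} with weight $g=\mathds{1}_{\bB}$ (whose symmetric decreasing rearrangement is $\mathds{1}_{[-t,t]}$) gives $\int_{\bB\cap\cJ'} b^{(pr.)}(e,b)\,de \le \int_{\bB} b^{(pr.)}(e,b)\,de \le \int_{-t}^{t} b^{(pr.)}_\sigma(e,b)\,de$; then the componentwise majorization $b^{(pr.)}(\cdot,b)\prec\Tilde{b}^{(pr.)}(\cdot,b)$ supplied by $b^{(pr.)}\cR\Tilde{b}^{(pr.)}$, together with $\Tilde{b}^{(pr.)}_\sigma(\cdot,b)=\Tilde{b}^{(pr.)}(\cdot,b)$, upgrades this to $\int_{-t}^{t}\Tilde{b}^{(pr.)}(e,b)\,de$. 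Hence the left-hand side is at most $\min\!\big(1-m(b),\ \int_{-t}^{t}\Tilde{b}^{(pr.)}(e,b)\,de\big)$. Finally, since $\cJ=[-r,r]$ is symmetric with $\int_{-r}^{r}\Tilde{b}^{(pr.)}(e,b)\,de = 1-m(b)$, the chosen right-hand side equals $\min\!\big(\int_{-t}^{t}\Tilde{b}^{(pr.)}(e,b)\,de,\ 1-m(b)\big)$, which is exactly the minimum of the two upper bounds. Therefore the required inequality holds for every $\bB$, and the majorization follows.

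The step to get right is the recognition that symmetry of $\cJ$ is precisely what forces the two independent bounds on the left-hand side to coincide with the single right-hand side, so that no case split on $t\le r$ versus $t>r$ is actually needed; this is where the defining properties of $\cJ$ from Lemma~\ref{lemma:symm_int} (a symmetric interval centered at $0$ carrying mass $1-m(b)$) enter. The remaining points are bookkeeping: checking that the shared normalizing constant legitimizes the reduction to unnormalized restrictions, and routing the inequality $\int_{\bB} b^{(pr.)}(\cdot,b)\,de \le \int_{-t}^{t} b^{(pr.)}_\sigma(\cdot,b)\,de$ through Lemma~\ref{lemma:H-L-ineq} rather than re-deriving a Hardy--Littlewood bound from scratch. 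Beyond this I expect no genuine obstacle, as each inequality used is either the hypothesis, nonnegativity of $b^{(pr.)}$, or an already-established rearrangement result.
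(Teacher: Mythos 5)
Your argument is correct. Note, however, that the paper itself supplies no proof of this lemma: it is dispatched with the remark that it is ``essentially Lemma 8 of'' Lipsa--Martins, so your contribution is a self-contained verification rather than an alternative to an argument in the text. What you do is the standard rearrangement proof, and it lines up with what the cited result rests on: reduce to the unnormalized restrictions (legitimate because both normalizing constants equal $1/(1-m(b))$ by the mass condition defining $\cJ$ and $\cJ'$), then verify the set-characterization of $\prec$ by taking $\bB'=[-t,t]$ for any $\bB$ with $\cL(\bB)=2t$, bounding $\int_{\bB\cap\cJ'}b^{(pr.)}(e,b)\,de$ both by the total mass $1-m(b)$ and, via Lemma~\ref{lemma:H-L-ineq} and the componentwise majorization $b^{(pr.)}(\cdot,b)\prec\Tilde{b}^{(pr.)}(\cdot,b)$ supplied by $\cR$, by $\int_{-t}^{t}\Tilde{b}^{(pr.)}(e,b)\,de$ (using $\Tilde{b}^{(pr.)}_\sigma=\Tilde{b}^{(pr.)}$ since $\Tilde{b}^{(pr.)}(\cdot,b)\in\cS(0)$). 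The observation that the right-hand side $\int_{[-\min(t,r),\min(t,r)]}\Tilde{b}^{(pr.)}(e,b)\,de$ is exactly the minimum of your two upper bounds is a clean way to avoid the case split on $t\lessgtr r$, and it is precisely where the symmetry and the mass-$(1-m(b))$ property of $\cJ$ from Lemma~\ref{lemma:symm_int} are consumed. The one point worth making explicit if you write this up is that verifying the paper's ``equivalently'' set-form of majorization for every Borel $\bB$ does establish $\prec$ in the rearrangement sense (this equivalence is itself a Hardy--Littlewood consequence, which the paper asserts in the definition); with that granted, every step of yours is either the hypothesis, nonnegativity, or an already-stated rearrangement fact, and I see no gap.
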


The following proposition is crucial for proving joint optimality.
\begin{proposition}\label{prop:W_R}
    The value functions $W^{(1)}_t$ and $W^{(2)}_t$ of Theorem~\ref{thm:prepost-val-func} satisfy property $\cR$.
\end{proposition}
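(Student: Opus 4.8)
The plan is to prove the two assertions simultaneously by induction on $t$, exploiting the coupled recursion of Theorem~\ref{thm:prepost-val-func}: $W^{(2)}_{t+1}$ is built from $W^{(1)}_t$ through $F^{(1)}$, while $W^{(1)}_{t+1}$ is built from $W^{(2)}_{t+1}$ through $F^{(2)}$. The base case is immediate, since $W^{(1)}_0\equiv 0$ by~\eqref{eq:W0}, so $W^{(1)}_0(\mu)=W^{(1)}_0(\nu)$ whenever $\mu\cR\nu$. For the inductive step I assume $W^{(1)}_t$ satisfies property $\cR$, first deduce the same for $W^{(2)}_{t+1}$, and then bootstrap from $W^{(2)}_{t+1}$ to $W^{(1)}_{t+1}$.

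For the $W^{(2)}_{t+1}$ step I would use the separation already exposed in~\eqref{eq:hat_e_indep}: the term $\beta W^{(1)}_t(F^{(1)}(b^{(po.)}))$ does not depend on $\hat e$, so it factors out of the infimum in~\eqref{eq:W2}, giving $W^{(2)}_{t+1}(b^{(po.)}) = \beta W^{(1)}_t(F^{(1)}(b^{(po.)})) + L(b^{(po.)})$ with $L$ as in Lemma~\ref{lemma:R_L}. Given $b^{(po.)}\cR\Tb^{(po.)}$, Lemma~\ref{lemma:R_L} yields $L(b^{(po.)})\ge L(\Tb^{(po.)})$, while Lemma~\ref{lemma:R_F(theta)} gives $F^{(1)}(b^{(po.)})\cR F^{(1)}(\Tb^{(po.)})$, so the induction hypothesis yields $W^{(1)}_t(F^{(1)}(b^{(po.)}))\ge W^{(1)}_t(F^{(1)}(\Tb^{(po.)}))$. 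Since $\beta>0$, adding the two inequalities gives $W^{(2)}_{t+1}(b^{(po.)})\ge W^{(2)}_{t+1}(\Tb^{(po.)})$.

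For the $W^{(1)}_{t+1}$ step, fix $b^{(pr.)}\cR\Tb^{(pr.)}$ with $\Tb^{(pr.)}(\cdot,b)\in\cS(0)$. Because $W^{(1)}_{t+1}$ is an infimum over prescriptions $\Gamma\in\cG$, it suffices to produce, for each $\Gamma$, an even and increasing $\Tg$ with $\bE[\lambda u + W^{(2)}_{t+1}(F^{(2)}(\Tb^{(pr.)},\Tg,y))]\le \bE[\lambda u + W^{(2)}_{t+1}(F^{(2)}(b^{(pr.)},\Gamma,y))]$; taking the infimum over $\Gamma$ then gives $W^{(1)}_{t+1}(b^{(pr.)})\ge W^{(1)}_{t+1}(\Tb^{(pr.)})$. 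I construct $\Tg$ per belief value: letting $m(b)$ be the transmit mass of $\Gamma$ at $b$, Lemma~\ref{lemma:symm_int} produces a symmetric no-transmit interval $\cJ(b)$ for $\Tb^{(pr.)}$ carrying the same no-transmit mass $1-m(b)$, and $\Tg$ transmits exactly off $\cJ(b)$. Matching these masses makes the transmission cost $\lambda\sum_b\int_e \Gamma(e,b)b^{(pr.)}(e,b)\,de$ and the success probability $\bP(y\neq\Xi)=\sum_b b\int_e \Gamma(e,b)b^{(pr.)}(e,b)\,de$ identical on both sides; hence $\bP(y=\Xi)$ agrees as well, and on $\{y\neq\Xi\}$ both post-scheduling beliefs collapse to the same $\delta_{(0,p_{11})}$ (cf.~\eqref{eq:prob_y} and Lemma~\ref{lemma:prepost_evolve}), contributing the same constant. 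On the $\{y=\Xi\}$ branch I would show $F^{(2)}(b^{(pr.)},\Gamma,\Xi)\cR F^{(2)}(\Tb^{(pr.)},\Tg,\Xi)$: the center condition of $\cR$ holds because downweighting the transmit tails (weight $1-b$) of the $\cS(0)$ density $\Tb^{(pr.)}(\cdot,b)$ keeps it symmetric and unimodal about $0$, and the majorization condition follows from Lemma~\ref{lemma:pi_tildepi}. The property $\cR$ for $W^{(2)}_{t+1}$ just proved then gives $W^{(2)}_{t+1}(F^{(2)}(b^{(pr.)},\Gamma,\Xi))\ge W^{(2)}_{t+1}(F^{(2)}(\Tb^{(pr.)},\Tg,\Xi))$, completing the comparison.

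The main obstacle is precisely this $\{y=\Xi\}$ branch: the drop-branch belief is not a clean restriction of $b^{(pr.)}(\cdot,b)$, but a reweighting that keeps the no-transmit region $\{\Gamma(\cdot,b)=0\}$ at full weight while scaling the transmit region by $1-b$. Establishing $F^{(2)}(b^{(pr.)},\Gamma,\Xi)(\cdot,b)\prec F^{(2)}(\Tb^{(pr.)},\Tg,\Xi)(\cdot,b)$ therefore requires combining Lemma~\ref{lemma:pi_tildepi} (which controls the restricted no-transmit parts) with the effect of the $1-b$ weight on the complementary region, and then summing consistently over the discrete belief values $b\in\cB$. Some care is also needed with the normalization of the per-$b$ slices of $b^{(pr.)}$ when invoking Lemma~\ref{lemma:symm_int}. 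Everything else reduces to the already-assembled majorization machinery (Lemmas~\ref{lemma:R_L}, \ref{lemma:R_F(theta)}, \ref{lemma:symm_int}, \ref{lemma:pi_tildepi}) together with the induction hypothesis.
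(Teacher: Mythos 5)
Your proposal is correct and follows essentially the same route as the paper: induction on $t$, the decomposition $W^{(2)}_{t+1}=\beta W^{(1)}_t\circ F^{(1)}+L$ handled by Lemmas~\ref{lemma:R_F(theta)} and~\ref{lemma:R_L}, and the construction of an even, increasing $\Tilde{\Gamma}$ matching the no-transmit mass via Lemma~\ref{lemma:symm_int} so that the transmission cost and drop/success probabilities coincide. The one step you flag as an obstacle is resolved in the paper exactly along the lines you anticipate, by writing the drop-branch posterior as a convex combination (weight $b$ on the restricted no-transmit density, weight $1-b$ on the full prior) and applying Lemma~\ref{lemma:pi_tildepi} together with $b^{(pr.)}\prec\Tilde{b}^{(pr.)}$ termwise.
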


\begin{proof}
    We will show the result using induction. Since $W^{(1)}_0(b^{(pr.)})=0$~\eqref{eq:W0} for any pre-scheduling belief $b^{(pr.)}$, hence $W^{(1)}_t$ satisfies property $\cR$ for $t=0$. Next, assume that $W^{(1)}_s$ satisfy property $\cR$ for $s=0,1,\ldots,t$. We now divide the proof into two steps. In this first step we show that $W^{(2)}_{t+1}$ satisfies the property $\cR$. In the second step we show that $W^{(1)}_{t+1}$ also satisfies the property $\cR$. 

    \textbf{Step I:} If $W^{(1)}_t$ satisfies the property $\cR$, then $W^{(2)}_{t+1}$ also satisfies $\cR$: As in Lemma~\ref{lemma:min_0}, we first expand $W^{(2)}_{t+1}$ as follows,
    \nal{
    W^{(2)}_{t+1}(b^{(po.)}) = &~ \beta W^{(1)}_t(F^{(1)}(b^{(po.)})) \notag \\
    & + \inf\limits_{\hat{e} \in \bR} \sum_{b \in \cB} \int_{e' \in \bR} (e'-\hat{e})^2 b^{(po.)}(e',b) \,de'.
    }
    We will show that each term on the right-hand side of the above equation satisfies property $\cR$. Now, Lemma~\ref{lemma:R_F(theta)} combined with the assumption that $W_t^1$ satisfies property $\cR$ implies that the first term on right-hand side satisfies property $\cR$. Lemma~\ref{lemma:R_L} shows that the second term also satisfies property $\cR$. This completes Step I.

    \textbf{Step II:} If $W^{(2)}_{t+1}$ satisfies $\cR$, then $W^{(1)}_{t+1}$ also satisfies $\cR$: We consider $b^{(pr.)}$ and $\Tilde{b}^{(pr.)}$ such that $b^{(pr.)} \cR \Tilde{b}^{(pr.)}$ and $\Tilde{b}^{(pr.)}(\cdot,b) \in \cS(e')$ for some $e' \in \bR$ and for all $b \in \cB$. From Theorem~\ref{thm:prepost-val-func} we have,
    \nal{
    W^{(1)}_{t+1} (b^{(pr.)}) & = \inf\limits_{\Hat{\Gamma} \in \mathcal{G}} \bE\lf[\lambda u + W^{(2)}_{t+1}(F^{(2)}(b^{(pr.)},\Gamma,y)\rt] \\
    & = \inf\limits_{\Hat{\Gamma}:\bR \times [0,1] \rightarrow \{0,1\}} \widetilde{W}^{(1)}_{t+1}(b^{(pr.)},\hat{\Gamma}),
    }
    where $\widetilde{W}^{(1)}_{t+1}(b^{(pr.)},\hat{\Gamma}) := \bE\lf[\lambda u + W^{(2)}_{t+1}(F^{(2)}(b^{(pr.)},\Gamma,y)\rt]$. We will show that for any given prescription $\Gamma$, we can construct another prescription $\Tilde{\Gamma}$ such that $\widetilde{W}^{(1)}_{t+1}(b^{(pr.)},\Gamma) \geq \widetilde{W}^{(1)}_{t+1}(\Tilde{b}^{(pr.)},\Tilde{\Gamma})$. This in turn will imply that $W^{(1)}_{t+1} (b^{(pr.)}) \geq W^{(1)}_{t+1} (\Tilde{b}^{(pr.)})$ which will complete Step II. We first expand $\widetilde{W}^{(1)}_{t+1}$ as follows,
    \nal{
    & \widetilde{W}^{(1)}_{t+1}(b^{(pr.)},\Gamma) = \bE\lf[\lambda u + W^{(2)}_{t+1}(F^{(2)}(b^{(pr.)},\Gamma,y)\rt] \\
    & = \lambda \sum_{b \in \cB} \int_{e \in \bR} \Gamma(e,b) b^{(pr.)}(e,b) \,de \\
    & + \bE[W^{(2)}_{t+1}(F^{(2)}(b^{(pr.)},\Gamma,y)] \\
    & = \lambda \sum_b \int_e \Gamma(e,b) b^{(pr.)}(e,b) \,de + \sum_b \int_e b^{(pr.)}(e,b) \\
    & \times (1-p(b,\Gamma(e,b))) W^{(2)}_{t+1}(\delta_{(0,p_{11})}(e,b)) \,de \\
    & + \sum_b \int_e b^{(pr.)}(e,b) p(b,\Gamma(e,b)) W^{(2)}_{t+1}(F^{(2)}(b^{(pr.)},\Gamma,y)) \,de,
    }
    where the last equality follows from~\eqref{eq:theta-evolve}. We now note that $W^{(2)}_{t+1}(\delta_{(0,p_{11})}(e,b))$ does not depend on $(e,b)$, and hence assume $L:=W^{(2)}_{t+1}(\delta_{(0,p_{11})}(e,b))$ for some constant $L \in \bR$. The above expression for $\widetilde{W}^{(1)}_{t+1}(b^{(pr.)},\Gamma)$ can now be written as,
    \al{
    & \widetilde{W}^{(1)}_{t+1}(b^{(pr.)},\Gamma) = \lambda \sum_b \int_e \Gamma(e,b) b^{(pr.)}(e,b) \,de \notag \\
    & + L \sum_b \int_e b^{(pr.)}(e,b) (1-p(b,\Gamma(e,b)))  \,de \notag \\
    & + \sum_b \int_e b^{(pr.)}(e,b) p(b,\Gamma(e,b)) W^{(2)}_{t+1}(F^{(2)}(b^{(pr.)},\Gamma,y)) \,de.\label{eq:W_tilde}
    }
    For any given prescription $\Gamma$, we will now construct another prescription $\Tilde{\Gamma}$ as described next. For this purpose, we define $e\ust(b, b^{(pr.)})>0$ to be the radius of an open interval centered at $e'$ (recall, $\Tilde{b}^{(pr.)}(\cdot,b) \in \cS(e')$) such that 
    \al{
    & \int_{|e-e'| < e\ust(b,b^{(pr.)})} \Tilde{b}^{(pr.)}(e,b) \,de \notag \\
    & = \int_e b^{(pr.)}(e,b) (1-\Gamma(e,b)) \,de.\label{eq:define_r} 
    }
    Next, we define $\Tilde{\Gamma}$ as follows,
    \al{
    \Tilde{\Gamma}(e,b) = 
    \begin{cases}
        0 & \mbox{ if } |e-e'| < e\ust(b,b^{(pr.)}),\\
        1 & \mbox{ if } |e-e'| \geq e\ust(b,b^{(pr.)}).
    \end{cases} \label{eq:gamma_tilde}
    }
    Now, to show that the above choice of $\Tilde{\Gamma}$ satisfies $\widetilde{W}^{(1)}_{t+1}(b^{(pr.)},\Gamma) \geq \widetilde{W}^{(1)}_{t+1}(\Tilde{b}^{(pr.)},\Tilde{\Gamma})$, we begin by proving that the following hold for the three terms on the right-hand side of~\eqref{eq:W_tilde},
    \nal{
    \text{A)}~ & \int_e b^{(pr.)}(e,b) \Gamma(e,b) \,de = \int_e \Tilde{b}^{(pr.)}(e,b) \Tilde{\Gamma}(e,b) \,de, \\
    \text{B)}~ & \int_e b^{(pr.)}(e,b) p(b,\Gamma(e,b)) \,de \\
    & = \int_e \Tilde{b}^{(pr.)}(e,b) p(b,\Tilde{\Gamma}(e,b)) \,de, \\
    \text{C)}~ & \int_e b^{(pr.)}(e,b) (1-p(b,\Gamma(e,b))) \,de \\
    & = \int_e \Tilde{b}^{(pr.)}(e,b) (1-p(b,\Tilde{\Gamma}(e,b))) \,de.
    }
    We first consider A). It follows from~\eqref{eq:define_r} and~\eqref{eq:gamma_tilde} that
    \nal{
    & \int_e b^{(pr.)}(e,b) \Gamma(e,b) \,de \\
    & = \int_e b^{(pr.)}(e,b) \,de - \int_{|e-e'| < e\ust(b,b^{(pr.)})} \Tilde{b}^{(pr.)}(e,b) \,de \\
    & = \int_e \Tilde{b}^{(pr.)}(e,b) \,de - \int_{|e-e'| < e\ust(b,b^{(pr.)})} \Tilde{b}^{(pr.)}(e,b) \,de \\
    & = \int_{|e-e'| \geq e\ust(b,b^{(pr.)})} \Tilde{b}^{(pr.)}(e,b) \,de \\
    & = \int_{|e-e'| < e\ust(b,b^{(pr.)})} \Tilde{b}^{(pr.)}(e,b) \Tilde{\Gamma}(e,b)\,de \\
    & + \int_{|e-e'| \geq e\ust(b,b^{(pr.)})} \Tilde{b}^{(pr.)}(e,b) \Tilde{\Gamma}(e,b)\,de \\
    & = \int_e \Tilde{b}^{(pr.)}(e,b) \Tilde{\Gamma}(e,b) \,de.
    }
    Next consider B). It follows from i) that
    \nal{
    & \int_e b^{(pr.)}(e,b) p(b,\Gamma(e,b)) \,de \\
    & = \lf[\int_e b^{(pr.)}(e,b) \mathds{1}_{\{\Gamma(e,b)=0\}}  p(b,0) \,de\rt] \\
    & + \lf[\int_e b^{(pr.)}(e,b) \mathds{1}_{\{\Gamma(e,b)=1\}}  p(b,1) \,de\rt]\\
    & = \lf[\int_e \Tilde{b}^{(pr.)}(e,b) \mathds{1}_{\{\Tilde{\Gamma}(e,b)=0\}}  p(b,0) \,de\rt] \\
    & + \lf[\int_e \Tilde{b}^{(pr.)}(e,b) \mathds{1}_{\{\Tilde{\Gamma}(e,b)=1\}}  p(b,1) \,de\rt]\\
    & \int_e \Tilde{b}^{(pr.)}(e,b) p(b,\Tilde{\Gamma}(e,b)) \,de. 
    }
    Similarly, we can show C) also holds. 

    Next, we will show that $F^{(2)}(b^{(pr.)},\Gamma,y) \cR F^{(2)}(\Tilde{b}^{(pr.)},\Tilde{\Gamma},y)$. This will imply $W^{(2)}_{t+1}(b^{(pr.)}, \Gamma) \geq W^{(2)}_{t+1}(\Tilde{b}^{(pr.)},\Tilde{\Gamma})$ since by assumption $W^{(2)}_{t+1}$ satisfies $\cR$. Finally, $W^{(2)}_{t+1}(b^{(pr.)}, \Gamma) \geq W^{(2)}_{t+1}(\Tilde{b}^{(pr.)},\Tilde{\Gamma})$ combined with the above results, A), B), and C) will then establish $\widetilde{W}_{t+1}(b^{(pr.)}, \Gamma) \geq \widetilde{W}_{t+1}(\Tilde{b}^{(pr.)}, \Tilde{\Gamma})$. Thus, Step II will be proved.

    For the subsequent proof, we will consider $\Tilde{b}^{(pr.)}(\cdot,b) \in \cS(0)$, i.e., $e'=0$. Now to show $F^{(2)}(b^{(pr.)},\Gamma,y) \cR F^{(2)}(\Tilde{b}^{(pr.)},\Tilde{\Gamma},y)$, we have to prove that the following holds,
    \begin{itemize}
        \item[i)] for each $b \in \cB$, $F^{(2)}(b^{(pr.)},\Gamma,y)(\cdot,b) \prec F^{(2)}(\Tilde{b}^{(pr.)},\Tilde{\Gamma},y)(\cdot,b)$,
        \item[ii)] for all $b \in \cB$, $F^{(2)}(\Tilde{b}^{(pr.)},\Tilde{\Gamma},y)(\cdot,b) \in \cS(e)$ for some $e \in \bR$. 
    \end{itemize}
    First, consider i). Fix $b \in \cB$. For $y \neq \Xi$, i) holds trivially.~Now we consider the case when $y= \Xi$. Let $\cJ(b) =\{e \in \bR : \Tilde{\Gamma}(e,b) = 0\}$ and $\cJ'(b)= \{e \in \bR : \Gamma(e,b) = 0\}$. Note that $\cJ(b)$ is a symmetric interval centered around 0, while $\cJ'(b)$ may not be an interval. It then follows from A) that, $\int_{\cJ(b)} \Tilde{b}^{(pr.)}(e,b) \, de = \int_{\cJ'(b)} b^{(pr.)}(e,b) \,de$. To see this, we can write the former equality as $\int_{e \in \bR} \Tilde{b}^{(pr.)}(e,b) \mathds{1}_{\{\Tilde{\Gamma}(e,b)=0\}} \, de = \int_{e \in \bR} b^{(pr.)}(e,b) \mathds{1}_{\{\Gamma(e,b)=0\}} \,de$. Then, it follows from Lemma~\ref{lemma:pi_tildepi} that
    \al{\label{eq:pi_rstrct_major}
    b^{(pr.)}_{\cJ'}(\cdot,b) \prec \Tilde{b}^{(pr.)}_{\cJ}(\cdot,b),
    }
    where from Definition~\ref{def:restrict} we have
    \al{
    b^{(pr.)}_{\cJ'}(e,b) = \frac{b^{(pr.)}(e,b)\mathds{1}_{\{\Gamma(e,b)=0\}}}{ \int_{e'} b^{(pr.)}(e',b) \mathds{1}_{\{\Gamma(e',b)=0\}} \,de'}, \label{eq:pi_rstrct}\\
    \Tilde{b}^{(pr.)}_{\cJ}(e,b) = \frac{\Tilde{b}^{(pr.)}(e,b)\mathds{1}_{\{\Tilde{\Gamma}(e,b)=0\}}}{ \int_{e'} \Tilde{b}^{(pr.)}(e',b) \mathds{1}_{\{\Tilde{\Gamma}(e',b)=0\}} \,de'} \notag.
    }
    Next, for ease of notation denote $D(\Gamma,b^{(pr.)}):= \int_{e} \sum_{b} p(b,\Gamma_t(e,b)) b^{(pr.)}_t(e,b) \, de$. Then, it follows from~\eqref{eq:prob_y} and~\eqref{eq:theta-evolve} that
    \nal{
    & b^{(po.)}(e,b) = \frac{ b^{(pr.)}(e,b) \mathds{1}_{\{\Gamma(e,b)=0\}}}{D(\Gamma,b^{(pr.)})} \\
    & + \frac{(1-b) b^{(pr.)}(e,b) \mathds{1}_{\{\Gamma(e,b)=1\}}}{D(\Gamma,b^{(pr.)})} \\
    & = \frac{(1-(1-b)) b^{(pr.)}(e,b) \mathds{1}_{\{\Gamma(e,b)=0\}}}{D(\Gamma,b^{(pr.)})} \\
    & + \frac{(1-b) b^{(pr.)}(e,b) \mathds{1}_{\{\Gamma(e,b)=0\} \cup \{\Gamma(e,b)=1\}}}{D(\Gamma,b^{(pr.)})}  \\
    & = \frac{b b^{(pr.)}_{\cJ'}(e,b) \int_{e'} b^{(pr.)}(e',b) \mathds{1}_{\{\Gamma(e',b)=0\}} \,de'}{D(\Gamma,b^{(pr.)})} \\
    & + \frac{(1-b) b^{(pr.)}(e,b) }{D(\Gamma,b^{(pr.)})},
    }
    where the last equality follows from~\eqref{eq:pi_rstrct}.~Similarly,
    \nal{
    \Tilde{b}^{(po.)}(e,b) = & \frac{b \Tilde{b}^{(pr.)}_{\cJ}(e,b) \int_{e'} \Tilde{b}^{(pr.)}(e',b) \mathds{1}_{\{\Tilde{\Gamma}(e',b)=0\}} \,de'}{D(\Tilde{\Gamma},\Tilde{b}^{(pr.)})} \\
    & + \frac{(1-b) \Tilde{b}^{(pr.)}(e,b) }{D(\Tilde{\Gamma},\Tilde{b}^{(pr.)})}.
    }
    Finally, from~\eqref{eq:pi_rstrct_major} and our assumption made at the beginning of Step II that $b^{(pr.)} \prec \Tilde{b}^{(pr.)}$ combined with the above expressions for $b^{(po.)}$ and $\Tilde{b}^{(po.)}$, we have that $F^{(2)}(b^{(pr.)},\Gamma,y)(\cdot,b) \prec F^{(2)}(\Tilde{b}^{(pr.)},\Tilde{\Gamma},y)(\cdot,b)$. This completes the proof for i).

    Next, consider ii). Since $\Tilde{b}^{(pr.)}(\cdot,b) \in \cS(0)$, and moreover $\Tilde{\Gamma}(\cdot,b)$ is even and increasing, it follows from Lemma~\ref{lemma:theta-SU} that $F^{(2)}(\Tilde{b}^{(pr.)},\Tilde{\Gamma},y)(\cdot,b) \in \cS(0)$ for all $b \in \cB$. This proves ii). This completes the proof of Step II.
\end{proof}

As a result of the above proposition, we have the following result.

\begin{corollary}\label{lemma:opt_pres}
    Let $b^{(pr.)}(\cdot,b) \in \cS(0)$ for all $b \in \cB$. Then, there exists an optimal prescription $\Tg$ that is even and increasing.
\end{corollary}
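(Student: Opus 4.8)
The plan is to read the corollary off the construction already carried out in Step~II of the proof of Proposition~\ref{prop:W_R}, specialized to the case where the two beliefs appearing there coincide. First I would observe that the hypothesis $b^{(pr.)}(\cdot,b)\in\cS(0)$ for all $b\in\cB$ gives $b^{(pr.)}\,\cR\,b^{(pr.)}$: condition~1) of the relation $\cR$ holds by reflexivity of majorization (namely $b^{(pr.)}(\cdot,b)\prec b^{(pr.)}(\cdot,b)$), and condition~2) holds with center $e'=0$. Hence the situation analyzed in Step~II applies with the choice $\Tilde{b}^{(pr.)}=b^{(pr.)}$ and $e'=0$.

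Next I would apply that construction directly. For an arbitrary prescription $\Gamma\in\cG$, equations~\eqref{eq:define_r}--\eqref{eq:gamma_tilde} produce a prescription $\Tg$ that, since $e'=0$, transmits exactly on $\{e:|e|\ge e\ust(b,b^{(pr.)})\}$. This is a threshold rule in $|e|$, so $\Tg(\cdot,b)$ is even and non-decreasing in $|e|$, i.e.\ even and increasing. The inequality established at the end of Step~II, namely $\widetilde{W}^{(1)}_{t+1}(b^{(pr.)},\Gamma)\ge\widetilde{W}^{(1)}_{t+1}(\Tilde{b}^{(pr.)},\Tg)$, becomes $\widetilde{W}^{(1)}_{t+1}(b^{(pr.)},\Gamma)\ge\widetilde{W}^{(1)}_{t+1}(b^{(pr.)},\Tg)$ under this specialization.

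Since $\Gamma$ was arbitrary, this shows that every prescription is dominated (in the sense of $\widetilde{W}^{(1)}_{t+1}$) by an even and increasing one, so the infimum defining $W^{(1)}_{t+1}(b^{(pr.)})$ over all of $\cG$ equals the infimum taken only over even and increasing prescriptions. Thus there is no loss of optimality in restricting to such prescriptions, which is exactly the claim of the corollary. If one insists on an exact minimizer rather than a dominating subclass, I would combine this with the near-optimal selection argument already invoked in Theorem~\ref{thm:prepost-val-func} (cf.~\cite[Prop.~9.19]{bertsekas1996stochastic}), applied within the even and increasing class.

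The only point that needs verification---and the one I expect to be the main obstacle---is that the Step~II chain of inequalities genuinely survives the collapse $\Tilde{b}^{(pr.)}=b^{(pr.)}$. The three equalities A), B), C) there compared integrals of $b^{(pr.)}$ against $\Gamma$ with integrals of $\Tilde{b}^{(pr.)}$ against $\Tg$; with $\Tilde{b}^{(pr.)}=b^{(pr.)}$ they persist because $\Tg$ was defined through~\eqref{eq:define_r} precisely so as to match the mass that $\Gamma$ places on its no-transmit region. The remaining inequality followed from $F^{(2)}(b^{(pr.)},\Gamma,y)\,\cR\,F^{(2)}(\Tilde{b}^{(pr.)},\Tg,y)$ together with property~$\cR$ of $W^{(2)}_{t+1}$; that majorization step used only $b^{(pr.)}\prec\Tilde{b}^{(pr.)}$ (here an equality, hence trivially true) and $\Tilde{b}^{(pr.)}(\cdot,b)\in\cS(0)$, so it is unaffected by the specialization. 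I would check these reductions explicitly, but anticipate only bookkeeping rather than any new analytic difficulty.
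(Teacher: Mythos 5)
Your proposal is correct and follows essentially the same route as the paper: both invoke the trivial self-majorization $b^{(pr.)}(\cdot,b)\prec b^{(pr.)}(\cdot,b)$ and then reuse the Step~II construction of Proposition~\ref{prop:W_R} with $\Tilde{b}^{(pr.)}=b^{(pr.)}$ to produce an even and increasing prescription that does at least as well. Your added care about whether an exact minimizer exists (versus a dominating subclass, handled via the near-optimal selection of~\cite[Prop.~9.19]{bertsekas1996stochastic}) is a reasonable refinement of the paper's shorter argument, which simply assumes an optimal $\Gamma$ and replaces it.
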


\begin{proof}
    We first note that for all $b \in \cB, b^{(pr.)}(\cdot,b) \prec b^{(pr.)}(\cdot,b)$ trivially. Suppose the optimal prescription corresponding to $W^{(2)}_t(b^{(pr.)})$ is $\Gamma$. Then, using this $\Gamma$ we can construct another prescription $\Tg$ similar to the one constructed in Step II of Proposition~\ref{prop:W_R} (using $b^{(pr.)}$ instead of $\Tpi$). As shown in Step II, $\Tg$ is also an optimal prescription that is even and increasing. This completes the proof. 
\end{proof}

\section{Proof of Theorem~\ref{thm:opt_est} 3)} \label{thm_proof}
We provide only an outline of the proof since the proof is similar to \cite[Theorem 4.1]{dutta2023optimal}, the key difference between these two is that now the noise process $\{w(t)\}_{t \in \bZ_+}$ satisfies Assumption \ref{assump:noise}, while~\cite{dutta2023optimal} made a more restrictive assumption that $\{w(t)\}_{t \in \bZ_+}$ is Gaussian.~We first note from part 1) of this theorem that once the estimator has been fixed to be as in~\eqref{eq:opt_est}, Problem 2~\eqref{def:obj} becomes that of exclusively designing an optimal transmission policy for the sensor. Let $J\ub(e,b;\phi)$ be the $\beta$-discounted cost~\eqref{def:pomdpobj} incurred by $f$ when the system starts in state $(e,b)\in \bR \times [0,1]$ with estimator fixed as in~\eqref{eq:opt_est},
    \nal{J\ub(e,b;f) := \bE_{f} \left(\sum_{t=0}^{\infty} \beta^{t}\Tilde{d}(e(t),b(t),u(t))\right). 
    }

The following result allows us to use the value iteration (VI)~\cite{Hernandez2012discrete} to compute $V\ub$~\eqref{def:pomdpobj}. 
\begin{lemma} \label{lemma:assump}
Consider the POMDP~\eqref{def:pomdpobj}, and let Assumption~\ref{assum:stabl} hold. The following properties hold:
    \renewcommand{\labelenumi}{P\arabic{enumi})}
    \begin{enumerate}
    \item The one-stage cost function $\Tilde{d}(e,b,u)$ \eqref{def:d_tilde} is continuous, non-negative, and inf-compact on $(\bR \times [0,1] \times \{0,1\})$.
    \item The transition kernels $\{P(\cdot,u,\cdot)\}_{u\in \{0,1\}}$ that describe the transition probabilities which result when control $u$ is applied, are strongly continuous.
    \item There exists a policy $f$ for which $J\ub(e,b;f) < \infty$ for each $(e,b) \in \bR \times [0,1]$.
    \end{enumerate}
\end{lemma}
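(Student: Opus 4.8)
The plan is to verify P1), P2), P3) in turn, since together they are precisely the standard hypotheses (a nonnegative, lower semicontinuous, inf-compact one-stage cost, a strongly continuous kernel, and one policy of finite cost) under which value iteration converges to $V\ub$. Throughout I would work on the state space $\bR\times\cB$, where $\cB$ is the countable reachable belief set identified in the Belief space paragraph, equipping the $\cB$-factor with the discrete topology; this is a legitimate Borel-space model and is what reconciles the atomic belief transitions with the continuity requirements. Property P1) is then immediate from the explicit form~\eqref{def:d_tilde}: nonnegativity holds because $e^2\ge 0$, $\lambda>0$ and $1-b\ge 0$ for $b\in[0,1]$; for each fixed $u$ the map $(e,b)\mapsto\Td(e,b,u)$ is a polynomial, hence continuous, and since the action set $\{0,1\}$ is discrete, $\Td$ is jointly continuous and in particular lower semicontinuous; finally inf-compactness is trivial because for every $(e,b)$ and $r\in\bR$ the sublevel set $\{u\in\{0,1\}:\Td(e,b,u)\le r\}$ is a subset of a finite set and therefore compact.

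For P2) I would fix $u$ and a bounded measurable test function $W:\bR\times\cB\to\bR$ and show that $(e,b)\mapsto\bE[W(e(t+1),b(t+1))\mid e(t)=e,b(t)=b,u(t)=u]$ is continuous. Because the $\cB$-factor is discrete, continuity in $b$ is automatic (the successor beliefs $\cT(b),p_{01},p_{11}$ and the mixing weights $b,1-b$ depend only on the isolated point $b$), so the only content is continuity in $e$. For $u=0$ the conditional expectation equals $\int_{\bR}W(e',\cT(b))\,\varphi(e'-ae)\,de'$, while for $u=1$ it equals $b\int_{\bR}W(e',p_{11})\varphi(e')\,de'+(1-b)\int_{\bR}W(e',p_{01})\varphi(e'-ae)\,de'$. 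In each branch the dependence on $e$ enters only through a translate $\varphi(\cdot-ae)$ of the noise density, so continuity follows from continuity of translation in $L^1$: since $\|\varphi(\cdot-s)-\varphi(\cdot-s_0)\|_1\to 0$ as $s\to s_0$, the difference of integrals is bounded by $\|W\|_\infty\,\|\varphi(\cdot-ae)-\varphi(\cdot-ae_0)\|_1\to 0$. This establishes strong continuity.

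For P3) I would take $f$ to be the always-transmit policy $u(t)\equiv 1$ and bound its expected discounted cost. Under this policy a transmission fails exactly when $c(t)=0$, so $e^+(t)=e(t)\mathds{1}(c(t)=0)$ and $e(t+1)=a\,e(t)\mathds{1}(c(t)=0)+w(t)$. Writing $S(t):=\bE[e(t)^2\mathds{1}(c(t)=0)]$ and using $w(t)\perp(e(t),c(t),c(t+1))$ with $\bE[w(t)]=0$ together with $\bP(c(t+1)=0\mid c(t)=0)=1-p_{01}$, one obtains
\[
S(t+1)=a^2(1-p_{01})\,S(t)+\sigma_w^2\,\bP(c(t+1)=0),\qquad \bE[e(t+1)^2]=a^2 S(t)+\sigma_w^2 .
\]
Here Assumption~\ref{assum:stabl} enters decisively: since $a^2(1-p_{01})<1$, the recursion for $S(t)$ is a contraction, so $\sup_t S(t)<\infty$ and hence $\sup_t\bE[e(t)^2]=:M<\infty$, where $M$ depends on the initial $e^2$ through $S(0)=e^2(1-b)$. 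Because $\Td(e,b,1)=(1-b)e^2+\lambda\le e^2+\lambda$, it follows that $J\ub(e,b;f)\le\sum_{t\ge 0}\beta^t(\bE[e(t)^2]+\lambda)\le (M+\lambda)/(1-\beta)<\infty$ for every $(e,b)$.

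The main obstacle is P3): obtaining a second-moment bound uniform in $t$ requires the dedicated recursion above and, crucially, the stability Assumption~\ref{assum:stabl}, whereas P1) and P2) are essentially structural. A secondary subtlety I would flag explicitly is that the strong continuity in P2) forces the modeling choice of the discrete topology on the reachable belief set $\cB$; on $[0,1]$ with its usual topology the atomic transition $b\mapsto\cT(b)$ under $u=0$ would break continuity against a bounded measurable $W$ that is discontinuous in its belief argument, so restricting to $\cB$ (or equivalently endowing it with the discrete topology) is what makes the assumption hold as stated.
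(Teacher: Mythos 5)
Your proposal is correct, and it is considerably more self-contained than the paper's proof, which consists of a single citation to Lemma 3.1 of the earlier conference paper. The ingredients you use are exactly the ones that citation must rely on: for P3) the always-transmit policy together with the stability condition $a^2(1-p_{01})<1$ driving a contracting second-moment recursion (the paper's own vestigial average-cost lemma in a commented-out appendix runs the same recursion, conditioning on $\cF(t)$ and using $1-b(t)\le 1-p_{01}$ for $t\ge 1$, whereas your bookkeeping via $S(t)=\bE[e(t)^2\mathds{1}(c(t)=0)]$ handles the arbitrary initial belief $b(0)$ more cleanly); for P2) the $L^1$-continuity of translation applied to the noise density $\varphi$; and P1) is immediate from the finiteness of the action set. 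The one genuinely additional point you make -- that strong continuity against bounded \emph{measurable} test functions forces the reachable belief set $\cB$ to carry the discrete topology, since the deterministic update $b\mapsto\cT(b)$ would otherwise fail the strong Feller property -- is a real subtlety that the paper glosses over, and it is worth recording.
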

\begin{proof}
    The proof follows from~\cite[Lemma 3.1]{dutta2023optimal}.
\end{proof}
We use $\Tilde{p}(e_+,b_+|e,b,u)$ to denote the transition density function for POMDP~\eqref{def:pomdpobj} from current state $(e,b)$ to next state $(e_+,b_+)$ when action $u$ is applied. Next, we divide the proof of Theorem~\ref{thm:opt_est} 3) into the following steps:

\textbf{Step i):} Under Assumption~\ref{assum:stabl}, we can show that we can use value iteration (VI) with VI functions $\{V_n\}_{n \in \bZ_+}$. This follows from Lemma~\ref{lemma:assump} and~\cite[Proposition 3.1]{dutta2023optimal}.   

\textbf{Step ii):} We show that $V\ub(e, b)$ and any optimal transmission strategy are even in $e$ for each $b \in \cB$. This follows from~\cite[Proposition 3.2]{dutta2023optimal}.

\textbf{Step iii):} We construct a simple folded POMDP with the following transition density function similar to~\cite{dutta2023optimal} to ease the analysis
\nal{
p_{fold}(\Te_+, \Tb_+ | \Te, \Tb, \Tu) & = \tilde{p}(\Te_+, \Tb_+ | \Te, \Tb, \Tu) \\
& + \tilde{p}(-\Te_+, \Tb_+ | \Te, \Tb, \Tu),
}
where $\Te_+, \Te \in \bR_+$, $\Tb_+, \Tb \in \cB$. We use $\TV$ to denote the $\beta$-discounted value function for the folded POMDP. $\TV$ is analogous to~\eqref{def:pomdpobj}. We next show that the folded POMDP is equivalent to original POMDP~\eqref{def:pomdpobj}, the proof of which follows from~\cite[Proposition 3.3]{dutta2023optimal}. Hence, it suffices to use the folded POMDP for further analysis.

\textbf{Step iv):} We show that $\TV$ satisfies the following properties:
\begin{enumerate}
    \item[A)] $\TV(\Te, \Tb)$ is non-decreasing with respect to $\Te \in \bR_+$ for each $\Tb \in \cB$.

    \item[B)] $\TV(\Te, \Tb)$ is non-increasing with respect to $\Tb \in \cB$ for each $\Te \in \bR_+$.

    \item[C)] For beliefs $\Tb, \Tb_1, \Tb_2, \Tb_3$ such that $\Tb_1 \ge \Tb_2$ and $\Tb_3 = \Tb \Tb_1 + (1- \Tb) \Tb_2$ we have that
    \nal{\label{eq:c}
            &(1-\Tb)\lambda + \Tb  \Tilde{V}\ub(\Te,\Tb_1) \notag  \\
            &  + (1-\Tb) \Tilde{V}\ub(\Te,\Tb_2)  
             \geq \Tilde{V}\ub(\Te,\Tb_3) .
        }
        
    \item[D)] For each $\Te\in\bR_+$, there exists a threshold $\Tb\ust(\Te)$ such that it is optimal to transmit for the sensor only when $\Tb \geq \Tb\ust(\Te)$. Thus, the optimal transmission strategy corresponding to $\Tilde{V}\ub$ exhibits a threshold structure. 
\end{enumerate}
The proof for the above properties follows from~\cite[Theorem 4.1]{dutta2023optimal} by replacing $\psi(v)$ and $\Tilde{\psi}(v,s)$ in~\cite{dutta2023optimal} by $\varphi(v)$ and $\varphi(v-s) + \varphi(v+s)$, respectively.

\textbf{Step iv):} Finally, it follows from Steps iii) and iv) that there exists an optimal transmission strategy corresponding~\eqref{def:obj} that exhibits a threshold structure.

\section*{References}
\vspace{-0.7cm}
\bibliographystyle{IEEEtran}
\bibliography{refs}

\end{document}